\theoremstyle{plain}
\newtheorem{theorem}{Theorem}[section]
\newtheorem{lemma}[theorem]{Lemma}
\newtheorem{proposition}[theorem]{Proposition}
\newtheorem{remark}[theorem]{Remark}
\theoremstyle{remark}
\newtheorem{example}{Example}
\newtheorem{assumption}{Assumption}
\newcommand{\E}{\mathbb{E}}
\newcommand{\eps}{\varepsilon}
\renewcommand{\le}{\leqslant}
\renewcommand{\ge}{\geqslant}
\renewcommand{\leq}{\leqslant}
\renewcommand{\geq}{\geqslant}
\newcommand{\la}{\langle}
\newcommand{\ra}{\rangle}
\renewcommand{\tilde}{\widetilde}
\renewcommand{\hat}{\widehat}
\newsavebox\myboxA
\newsavebox\myboxB
\newlength\mylenA
\newcommand*\mybar[2][0.75]{%
    \sbox{\myboxA}{$\m@th#2$}%
    \setbox\myboxB\null
    \ht\myboxB=\ht\myboxA%
    \dp\myboxB=\dp\myboxA%
    \wd\myboxB=#1\wd\myboxA
    \sbox\myboxB{$\m@th\overline{\copy\myboxB}$}
    \setlength\mylenA{\the\wd\myboxA}
    \addtolength\mylenA{-\the\wd\myboxB}%
    \ifdim\wd\myboxB<\wd\myboxA%
       \rlap{\hskip 0.5\mylenA\usebox\myboxB}{\usebox\myboxA}%
    \else
        \hskip -0.5\mylenA\rlap{\usebox\myboxA}{\hskip 0.5\mylenA\usebox\myboxB}%
    \fi}
\newcommand{\p}{{\mathbb{P}}}
\newcommand{\e}{{\mathbb{E}}}
\newcommand{\Reals}{\mathbb{R}}
\newcommand\rrac[2][r]{%
  \ifx r#1 (#2)\else
  \ifx s#1 [#2]\else
  \ifx c#1 \{#2\}\else
  \ifx v#1 |#2|\else
  \ifx a#1 \langle#2\rangle\else  
  \mathrm{Illegal~option}%
  \fi\fi\fi\fi\fi
}
\newcommand\Brac[2][r]{%
  \ifx r#1 \Big(#2\Big)\else
  \ifx s#1 \Big[#2\Big]\else
  \ifx c#1 \Big\{#2\Big\}\else
  \ifx v#1 \Big|#2\Big|\else
  \ifx a#1 \Big\langle#2\Big\rangle\else  
  \mathrm{Illegal~option}%
  \fi\fi\fi\fi\fi
}
\newcommand\brac[2][r]{%
  \ifx r#1 \big(#2\big)\else
  \ifx s#1 \big[#2\big]\else
  \ifx c#1 \big\{#2\big\}\else
   \ifx v#1 \big|#2\big|\else
  \ifx a#1 \big\langle#2\big\rangle\else    
  \mathrm{Illegal~option}%
  \fi\fi\fi\fi\fi
}
\begin{document}

\begin{frontmatter}
\title{Performance of Bayesian linear regression \\in a model with mismatch}
\runtitle{Performance of Bayesian linear regression in a model with mismatch}

\begin{aug}
\author[A]{\fnms{Jean} \snm{Barbier}\ead[label=e1,mark]{jbarbier@ictp.it, msaenz@ictp.it}},
\author[B]{\fnms{Wei-Kuo} \snm{Chen}\ead[label=e2,mark]{wkchen@umn.edu}}\\
\author[C]{\fnms{Dmitry} \snm{Panchenko}\ead[label=e3,mark]{panchenk@math.toronto.edu}}
\and
\author[A]{\fnms{Manuel} \snm{S\'aenz}\ead[label=e1,mark]{jbarbier@ictp.it, msaenz@ictp.it}}
\address[A]{International Center for Theoretical Physics, Trieste, Italy, \printead{e1}}

\address[B]{School of Mathematics, University of Minnesota, Minneapolis, United States, \printead{e2}}

\address[C]{Department of Mathematics, University of Toronto, Canada, \printead{e3}}
\end{aug}

\begin{abstract}
In this paper we analyze, for a model of linear regression with gaussian covariates, the performance of a Bayesian estimator given by the mean of a log-concave posterior distribution with gaussian prior, in the high-dimensional limit where the number of samples and the covariates' dimension are large and proportional. Although the high-dimensional analysis of Bayesian estimators has been previously studied for Bayesian-optimal linear regression where the correct posterior is used for inference, much less is known when there is a mismatch. Here we consider a model in which the responses are corrupted by gaussian noise and are known to be generated as linear combinations of the covariates, but the distributions of the ground-truth regression coefficients and of the noise are unknown. This regression task can be rephrased as a statistical mechanics model known as the Gardner spin glass, an analogy which we exploit. Using a leave-one-out approach we characterize the mean-square error for the regression coefficients. We also derive the log-normalizing constant of the posterior. Similar models have been studied by Shcherbina and Tirozzi and by Talagrand, but our arguments are much more straightforward. An interesting consequence of our analysis is that in the quadratic loss case, the performance of the Bayesian estimator is independent of a global ``temperature'' hyperparameter and matches the ridge estimator: sampling and optimizing are equally good.
\end{abstract}

\end{frontmatter}

\section{Introduction and set-up}
Linear regression with random covariates in the high-dimensional regime, where both the number of data points and their dimension scale proportionally, is a paradigmatic problem for modern inference. It has been studied mostly from two complementary perspectives: $(i)$ In Bayesian statistics where, due to important simplifications that occur in this setting, analyses have focused on the case of the estimator obtained as the expectation of the ``true'' posterior distribution of the regression coefficients \cite{barbier2020mutual,barbier2016mutual,barbier2019optimal,reeves2016replica}. This is usually referred to as the \emph{Bayesian-optimal estimator}. But also on \emph{sparse} Bayesian regression, see \cite{castillo2015bayesian} and the section below on related works. $(ii)$ Another large body of literature has studied instead the performance of optimization procedures in the context of robust statistics and M-estimation, where an estimator is obtained as the mode of a log-concave posterior measure or, equivalently, as the minimizer of a convex cost function \cite{9173999, bean2013optimal,el2018impact,advani2016statistical,advani2016equivalence,bradic2016robustness,sur2019modern,celentano2019fundamental,donoho2016high,gerbelot2020asymptotic2,gerbelot2020asymptotic,aubin2020generalization,pmlr-v40-Thrampoulidis15,thrampoulidis2018precise,taheri2020fundamental}. 

Our work focuses on Bayesian inference outside the optimal setting, for which much less is known. The lack of perfect knowledge of the model generating the data forces one to make certain inaccurate prior assumptions on the distribution of the regression coefficients as well as on the form of the model. This is, in a sense, more realistic and of practical interest. One natural setting for this, is to consider Bayesian linear regression with some sources of mismatch. Due to the lack of certain simplifying symmetries inherent to the optimal setting of inference, known as the ``Nishimori identities'' in physics \cite{contucci2009spin,NishimoriBook01,barbier2020strong}, even  a small mismatch can result in several theoretical challenges. 

\subsection{Our model}
In the present work, we aim to study a mismatched linear regression model which is as follows.
Let $x^*=(x_i^*)_{i\leq N}$ be an unknown ground-truth of regression coefficients. Assume that 
\begin{align}\label{convNorm}
\lim_{N\to\infty}\e \Bigl|\frac{\|x^*\|}{\sqrt N}-\sqrt\gamma\Bigr|^2=0  \quad \mbox{and}\quad  \e\Big(\frac{ \|x^*\|}{\sqrt N}\Big)^{4}\le C<\infty 
\end{align}
for some deterministic $\gamma\geq 0$ and fixed $C$ independent of $N$. Let $\alpha>0$ be a fixed parameter and let $M=\lfloor\alpha N\rfloor$\footnote{To be more precise, we might take $M=\alpha N+o(N)$. But tracking this correction makes no difference.}; $\|\,\cdot\,\|$ is the usual $L_2$ norm. Consider a matrix $G=(g_{ki})_{k\leq M,i\le N}$ with i.i.d. standard normal entries. The columns in $G$ are understood as random covariate vectors, while the rows correspond to independent samples.
%
 The responses $y=(y_k)_{k\le M}$ are obtained according to the linear model
\begin{align}
y_k=S_k(x^*) + z_k \label{truemodel}
\end{align}
for $S_k(x^*):=(\bar G x^*)_k,$
where $\bar G:=N^{-1/2}G$ and the variables $z=(z_k)_{k\le M}\stackrel{\rm{i.i.d.}}{\sim}\mathcal{N}(0,\Delta_*)$ for some $\Delta_*>0$ which gives the strength of the noise; $\mathcal{N}(m,\sigma^2)$ is the normal law of mean $m$ and variance $\sigma^2$. Here we will consider the regression task of recovering $x^*$ from the data $\mathcal{D}:=(y,\bar G)$ with two sources of mismatch:
\begin{enumerate}
    \item The distribution of $x^*$ is assumed to be unknown. Thus, we take the prior to be that of a gaussian vector with independent coordinates of mean $0$ and variance $1/(2\kappa)>0$.\\
    \item In the true model generating the data \eqref{truemodel}, the noise is given by i.i.d. gaussian random variables of variance $\Delta_*$. But its distribution is also assumed to be unknown. We therefore consider a factorized log-concave noise distribution with density proportional to $\prod_{k\le M} \exp u(x_k)$. Here $u$ is a non-positive concave function on $\mathbb{R}$ satisfying some further technical conditions given in Assumption \ref{ass:growth_u} below.
\end{enumerate}
Under these sources of mismatch, the Bayesian posterior distribution is given by
\begin{align}
P(x\in A \mid \mathcal{D})	=\frac{1}{{Z}(\mathcal{D})} \int_{A}\exp \Big(\sum_{k\le M} u \big(S_k(x)-y_k\big)- \kappa \|x\|^2\Big) \, dx\label{posterior}
\end{align}
for any measurable subset $A$ of $\mathbb{R}^N$, and ${{Z}(\mathcal{D})}$ is the normalization constant, also called the partition function in the terminology of statistical physics.
Our aim is to study the performance of the Bayesian estimator $\hat{x}$ defined through the mean of this mismatched Bayesian posterior distribution:
\begin{align}\label{estimator}
 \hat x (\mathcal{D}):= \int_{\mathbb{R}^N} x\, P(dx \mid \mathcal{D}).
\end{align}

\subsection{Main results and contributions}
Our main contributions are establishing the asymptotics of the log-normalizing constant and the performance of estimator \eqref{estimator}. Informally, we prove that, whenever the four-dimensional system (\ref{CPeqQ})--(\ref{CPeqRb}) presented below admits a unique solution, the log-normalizing constant converges to some non-random quantity $F(q,\rho)$, that is
\begin{gather*}
\lim_{N\to\infty} \e\Bigl|\frac1 N \ln {Z}(\mathcal{D}) -(-\kappa \gamma + F(q,\rho))\Bigr|^2=0,
\end{gather*}
and the mean-square error converges to some non-random quantity $q$, that is
\begin{align}\label{add:eq-16}
	\lim_{N\to\infty}\frac1N \E \|\hat x -x^*\|_2^2  = q.
\end{align}
Here $(q,\rho)$ is the unique critical point of the real-valued function $F$ with domain over the set $\{(q,\rho):0\leq q\leq \rho<\infty\}$ defined in \eqref{Fdef} below.

There are two major findings that follow from our results. First of all, somewhat surprisingly \eqref{add:eq-16} implies that the performance of the Bayesian estimator with gaussian prior is insensitive to the statistical properties of the coefficients $x^*$ other than its empirical second moment. The second major finding regards our main running example for our results which will play a special role given its tractability in the analysis and its importance in statistics. This example will correspond to the model where the function $u(s)$ is of the form $-s^2/(2\Delta)$ for some $\Delta>0$. Notice that this model is still mismatched since $\Delta$ is not necessarily equal to the $\Delta_*$ appearing in the model generating the data. When we convert the posterior distribution into a \emph{Gibbs measure} parametrized by the {\it inverse temperature} $\beta$ we discover that, for quadratic $u$, the mean-square error is independent of the inverse temperature $\beta$ in the high-dimensional limit. This means that when the performance is evaluated using a square loss, {Bayesian regression with gaussian prior and ridge regression have the same performance.}
In other words, sampling and optimizing are equivalent in this setting. One may argue that the closedness between the mean and the mode of the posterior distribution with gaussian prior is mainly due to the isotropy of the covariates. This intuition makes perfect sense in the classical regime, where a lot of data is accessible $M\gg N$ when maximum likelihood estimation is optimal \cite{wasserman2004all}. But in the high-dimensional regime where  both $M$ and $N$ are large and comparable, this feature becomes much less evident. To the best of our knowledge, this fact was not rigorously obtained prior to our work. Exploring the generality of this equivalence between optimization and sampling procedures is an interesting open direction for future research. See \cite{advani2016statistical,aubin2020generalization} for related connections between M-estimation and Bayesian inference from statistical mechanics heuristics.

The approach in this paper relies primarily on a connection between our regression task and a generalized version of the Shcherbina-Tirozzi (ST) mean-field spin glass model \cite{shcherbina2003rigorous}. We study the  limits of the corresponding log-partition function and some critical quantities called the overlaps. By utilizing the so-called smart path method, analogous results were intensively explored by Talagrand \cite{Talagrand2011spina} assuming that $u$ grows at most linearly. Whereas the machinery therein relies on quite heavy analysis, the advantage of our approach is methodological. In addition to being able to relax the growth condition for $u$ from linear to quadratic, our analysis is also rather simple and self-contained. It is based on methods coming from the mathematical physics of mean-field spin glasses: mainly a rigorous version of the cavity method \cite{Talagrand2011spina,panchenko2013sherrington} also called the leave-one-out method in statistics (see Sections~\ref{sec:5} and \ref{sec:freeenergy} for details). We expect for the simplicity of our approach to allow us to extend the analysis to generalized linear models in mismatched settings, for which results similar to the present ones exist only in the Bayesian-optimal setting of inference \cite{barbier2019optimal}. This is left for future work.

\subsection{Related works}


In recent years, many works on mismatched regression have appeared: on non-rigorous statistical mechanics techniques such as the replica method \cite{MezardParisi87b,MezardMontanari09} like, e.g., in \cite{advani2016statistical}; on the analysis of approximate message-passing algorithms, like in \cite{bradic2016robustness,sur2019modern,donoho2016high,gerbelot2020asymptotic}; and on Gordon's convex min-max theorem \cite{pmlr-v40-Thrampoulidis15,thrampoulidis2018precise}. All these approaches strongly rely on the fact that the estimator is obtained as a unique minimizer of a convex function and therefore do not generalize easily to mismatched Bayesian settings. The leave-one-out method was also used in \cite{el2013robust,el2018impact} but, like in the aforementioned references, only for the analysis of (non-Bayesian) M-estimators.

Complementary to our setup is a recent work \cite{mukherjee2021variational} on the mismatched Bayesian linear regression that derives conditions under which the ``naive mean-field approximation'' to the log-partition function is valid, and provides an infinite-dimensional variational formula for it (see \cite{ray2021variational} for a related setting). Last but not least, Bayesian linear regression has also been considered in some different high-dimensional scaling regimes than the one presented here: in settings with sparse ground-truth vectors of regression coefficients with a possibly vanishing fraction of non-zero entries. Reference \cite{castillo2015bayesian} studies in detail the properties of the posterior distribution with sparsity inducing priors in this case, see also \cite{bottolo2010evolutionary,george2000variable,george2000calibration,ishwaran2005spike,mitchell1988bayesian,scott2010bayes,yuan2005efficient,martin2017empirical,abramovich2010map,arias2014estimation} for related prior works on sparse linear regression. It has recently been understood that in the very sparse regime, and with the number of sampled responses much smaller than the covariates' dimension, intriguing ``all-or-nothing'' phase transitions occur where the recovery error jumps from almost zero to its maximum value at a certain threshold \cite{david2017high,reeves2019all,reeves2019all_2,DBLP:conf/nips/Niles-WeedZ20,DBLP:conf/nips/LuneauBM20,DBLP:conf/nips/BarbierMR20,niles2021all}. For a recent review on the Bayesian approach to high-dimensional inference, see \cite{banerjee2021bayesian}.

\subsection{Organization} The paper is organized as follows. In Section~\ref{sec:2}, we explain the link between the high-dimensional regression problem presented here and the ST spin glass model. 
In Section~\ref{sec:3}, we first state two results on the main quantities of interest in spin glass models, namely, the free energy (i.e., log-partition function) and overlaps (that will later be related to the mean-square error) in the ST model. We then state our main results on the regression task with a particular emphasis on the classical example of the Bayesian regression with quadratic loss and gaussian prior. Sections \ref{sec:4}--\ref{sec:freeenergy} are devoted to establishing results concerning the ST model. More explicitly, Section~\ref{sec:4} establishes a number of concentration properties for the overlaps and free energy. Section~\ref{sec:5} shows how to obtain certain fixed-point equations for the ``order parameters'' of the model based on the concentration results in Section~\ref{sec:4} by utilizing the cavity method and simple gaussian integration by parts. Section~\ref{sec:freeenergy} proves a closed-form expression for the limiting log-partition function of the posterior in a straightforward manner thanks to the convergence results for the order parameters. Finally, our results on the regression task are established in Section~\ref{sec:7}.

\section{Connection to the Shcherbina-Tirozzi  model}\label{sec:2}

In this section, we explain how our regression task can be related to a generalization of the ST model \cite{shcherbina2003rigorous}, a connection mentioned (but not exploited) in \cite{donoho2016high}. For $h\in \mathbb{R},$ consider the \emph{Hamiltonian} (i.e., energy function) $H_N=(H_N(\sigma))_{\sigma\in \mathbb{R}^N}$ defined by
\begin{align}\label{HamHN}
	H_N(\sigma) := \sum_{k\leq M} u\big(S_k(\sigma)+z_k\big) - h \sum_{i\leq N} \sigma_i - \kappa \|\sigma\|^2
\end{align}
and the corresponding log-partition function, also called \emph{free energy}, by
\begin{align}\label{add:eq-2}
	F_N& :=\frac{1}{N}\ln \int_{\mathbb{R}^N}\exp H_N(\sigma)\,d\sigma.
\end{align}
 In the terminology of the large deviation theory \cite{Dembo1998LargeDT,touchette2009large}, it is the scaled cumulant log-moment generating function associated with the energy per variable $H_{N}(\sigma)/N$.
When $z_k=0$ and $u$ grows at most linearly, the Hamiltonian $H_N$ is known as the original ST model  extensively studied in \cite{Talagrand2011spina}. In the setting of the present paper,  we  allow $u$ to grow quadratically (see Assumption \ref{ass:growth_u} below for details).


For a given realization of the randomness $(G,z)$ defining an instance of the model, we will refer to each element of a sequence $(\sigma^\ell)_{\ell\geq1}$ of independently drawn samples from the \emph{Gibbs measure}
\begin{align*}
    G_N(d\sigma):=\frac{\exp H_N(\sigma)\,d\sigma}{\int \exp H_N(\sigma')\,d\sigma'}
\end{align*}
as \emph{replicas}.

 Denote by $\la\, \cdot\,\ra$ the Gibbs expectation of replicas $(\sigma^\ell)_{\ell\geq 1}$: 
\begin{align}
 \big\langle f((\sigma^\ell)_{\ell\le L})\big\rangle :=\int_{\mathbb{R}^{NL}}f((\sigma^\ell)_{\ell\le L})\prod_{\ell\le L} G_N(d\sigma^\ell) .\label{gibbsExpec}    
\end{align}
Two key quantities for our analysis will be the \emph{overlaps} between replicas defined according to
 \begin{equation}
 	R_{1,1} := \frac{\|\sigma^1\|^{2}}{N}  \quad \mbox{ and }\quad R_{1,2} := \frac{\sigma^1\cdot\sigma^2}{N}  .\label{overlapsR}
 \end{equation}
 Additionally, we also define a pair of ``conjugate overlaps'' in terms of the following two auxiliary quantities
 \begin{align}
 	A(\sigma^\ell) := \Big(u'\big(S_k(\sigma^\ell)+z_k\big)\Big)_{k\le M} \quad \mbox{ and }\quad B(\sigma^\ell) := \Big(u''\big(S_k(\sigma^\ell)+z_k\big)\Big)_{k\le M}.	\label{defs:AB}
 \end{align}


 For convenience, we will sometimes write $A^\ell=(A_k^\ell)_{k\le M}$, $B^\ell=(B_k^\ell)_{k\le M}$, $S_k^\ell$ instead of $A(\sigma^\ell)$, $B(\sigma^\ell)$, or $S_k(\sigma^\ell)$. Whenever the replica index $\ell$ is omitted, it is assumed to be equal to $1$. The conjugate overlaps are then defined by
 \begin{equation}
 	Q_{1,1} := \frac{1}{N} \sum_{k\leq M} \Big((A_k)^2 + B_k\Big) \quad \mbox{ and }\quad Q_{1,2} := \frac{1}{N} \langle A^1,A^2\rangle .\label{overlapsQ}
 \end{equation}
 As one shall see, the vector $(R_{1,1},R_{1,2},Q_{1,1},Q_{1,2})$ of overlaps order parameters is the ``correct'' one, in the sense that all the statistical analysis can be asymptotically  expressed solely in terms of the limits of these quantities and the model at the ``macroscopic level'' is completely determined by them.

In view of our regression task, the posterior measure \eqref{posterior} can be thought of as a Gibbs measure of the spin glass model with Hamiltonian
\begin{align*}
	\tilde H_{N}(x):=\sum_{k\leq M}u\Big(\frac{1}{\sqrt{N}}\sum_{i\leq N}g_{ki}(x_i-x_i^*)-z_k\Big)-\kappa \|x\|^2,
\end{align*}
that is,
\begin{align}
	\label{gibbs}
	P(dx\mid \mathcal{D})=\tilde G_N(dx):=\frac{\exp{\tilde H_N(x)}\,dx}{\int \exp{\tilde H_N(x')}\,dx'}.
\end{align}
As in the ST model, we define the free energy by
$$
\tilde F_N:=\frac{1}{N}\ln \int_{\mathbb{R}^N}\exp \tilde H_N(x)\,dx.
$$
Denote by $(x^\ell)_{\ell\geq 1}$ the replicas drawn i.i.d. from measure \eqref{gibbs} and by $\la\, \cdot\,\ra^\sim$ the Gibbs expectation associated to the Hamiltonian $\tilde H_N$, defined similarly as \eqref{gibbsExpec} but instead with $\tilde G_N$.

In particular, from \eqref{estimator}, we readily see that the Bayesian estimator $$\hat x=\la x\ra^\sim.$$

The connection between the ST and our models relies on  a number of change of variables. First of all, by letting $w:=x-x^*$, we may write
\begin{align*}
\int_{\mathbb{R}^N} \exp \tilde  H_N(x) \,dx&=\int_{\mathbb{R}^N}\exp \Big(\sum_{k\leq M}u\Big(\frac{1}{\sqrt{N}}\sum_{i\leq N}g_{ki}w_i-z_k\Big)-\kappa \|w+x^*\|^2\Big)\,dw\\
&=\exp(-\kappa\|x^*\|^2)\int_{\mathbb{R}^N}\exp \Big(\sum_{k\leq M}u\Big(\frac{1}{\sqrt{N}}\sum_{i\leq N}g_{ki}w_i-z_k\Big)\\
&\qquad\qquad\qquad\qquad\qquad\qquad\qquad-2\kappa\sum_{i\le N}x_i^*w_i-\kappa\|w\|^2\Big)\,dw.
\end{align*}
To  simplify this integral, take an orthogonal matrix $O\in \mathcal{O}(N)$ such that $Ox^*=N^{-1/2}{\|x^*\|}\boldsymbol{1}_N$. Note that conditionally on $x^*$,
\begin{align*}
\e (GO^\intercal)_{k\ell }(GO^\intercal)_{k' \ell' }&=\e \Big(\sum_{i\le N}g_{ki}O_{\ell i}\Big)\Big(\sum_{i\le N}g_{k'i}O_{\ell' i}\Big)\\
&=\delta_{kk'}\sum_{i\le N}O_{\ell i}O_{\ell' i}=\delta_{kk'}(OO^\intercal )_{\ell\ell'}=\delta_{kk'}\delta_{\ell\ell'},
\end{align*}
which implies that $GO^\intercal \sim G$ conditionally on $x^*$. Note also that $\langle x^*,O^\intercal a\rangle =N^{-1/2}\|x^*\|\langle \boldsymbol{1}_N,a\rangle$ and $\|O^\intercal a\|=\|a\|.$ From these and using the change of variable $\sigma:=Ow$,
\begin{align}
	\begin{split}\label{add:eq-5.1}
	&\int_{\mathbb{R}^N}\exp \Big(\sum_{k\leq M}u\Big(\frac{1}{\sqrt{N}}\sum_{i\leq N}g_{ki}w_i-z_k\Big)-2\kappa\sum_{i\le N}x_i^*w_i-\kappa \|w\|^2\Big)\,dw\\
	&\qquad=\int_{\mathbb{R}^N}\exp \Big(\sum_{k\leq M}u\Big((\bar G O^\intercal \sigma)_k-z_k\Big)-2\kappa \langle x^*,O^\intercal \sigma\rangle -\kappa \|O^\intercal \sigma\|^2\Big)\,d\sigma\\
	&\qquad=\int_{\mathbb{R}^N}\exp \Big(\sum_{k\leq M}u\Big((\bar G O^\intercal\sigma)_k-z_k\Big)-\frac{2\kappa\|x^*\|}{\sqrt{N}}\sum_{i\le N}\sigma_i-\kappa \|\sigma\|^2\Big)\,d\sigma,
	\end{split}
\end{align}
which implies, from the rotational invariance of $G$ mentioned above and the symmetry of $z_k,$ that
\begin{align*}
&\e\ln\int_{\mathbb{R}^N}\exp \Big(\sum_{k\leq M}u\Big(\frac{1}{\sqrt{N}}\sum_{i\leq N}g_{ki}w_i-z_k\Big)-2\kappa\sum_{i\le N}x_i^*w_i-\kappa \|w\|^2\Big)\,dw\\
&\qquad=\e\ln\int_{\mathbb{R}^N}\exp \Big(\sum_{k\leq M}u\Big((\bar G \sigma)_k+z_k\Big)-\frac{2\kappa\|x^*\|}{\sqrt{N}}\sum_{i\le N}\sigma_i-\kappa \|\sigma\|^2\Big)\,d\sigma.
\end{align*}
To sum up,
\begin{align*}
\e\tilde F_N=-\kappa \e\gamma_N+\frac{1}{N}\e\ln \int_{\mathbb{R}^N}\exp \Bigl( \sum_{k\leq M} u\big(S_k(\sigma)+z_k\big) - 2\kappa\sqrt{\gamma_N} \sum_{i\leq N} \sigma_i - \kappa \|\sigma\|^2\Bigr)\,d\sigma
\end{align*}
for $\gamma_N:=\|x^*\|^2/N.$
%
If now we randomize $h=h_N=2\kappa\sqrt{\gamma_N}$ in the Hamiltonian \eqref{HamHN} of the ST model independently of all other randomness, then we readily get
\begin{align}\label{add:eq-1}
	\e\tilde F_N=-\kappa\e\gamma_N+\e F_N
\end{align}
and similarly,
\begin{align}\label{add:eq-3}
\e\|\hat x-x^*\|^2=\e \| \langle x\rangle^\sim-x^*\|^2  =\e \|\langle \sigma\rangle\|^2.
\end{align} 
Consequently, to understand our regression problem, we may study the limiting free energy and overlaps corresponding to the generalized ST model first. This characterization is part of our main results and is stated in the next section.

\section{Main results} \label{sec:3}
The first part of our main results consists of an expression for the limiting free energy and the convergence of the overlaps in the generalized ST model defined in \eqref{HamHN}. Throughout the rest of this paper, the following assumption on the function $u$ is in force. 
\begin{assumption}[Growth condition]\label{ass:growth_u}
	The function $u(s)$ is concave, non-positive, and there exists a constant $d>0$ such that 
	\begin{align*}
		\max\bigl(|u(0)|, |u'(0)|, \|u''\|_\infty, \|u'''\|_\infty, \|u^{(4)}\|_\infty \bigr)\leq d
	\end{align*}
	and
	\begin{align*}
		|u'(s)|\leq d(1+\sqrt{|u(s)|}).
	\end{align*}
\end{assumption}

\begin{remark}
	\rm The concavity of $u$ is conceptually an important assumption, which ensures that our posterior distribution is a log-concave measure and, as a result, the overlaps are concentrated under the Gibbs measure $G_N$. While the other assumptions are purely technical,  overall they weaken the settings considered in \cite{Talagrand2011spina}. In particular, in this reference $u$ is allowed to grow at most linearly and thus its results do not apply to our main example, namely, a quadratic $u$.
\end{remark}

To formulate our limiting free energy, we introduce $\bar F:\mathbb{R}^{4}\mapsto \mathbb{R}$ such that
\begin{align}
  \bar F(q,\rho,r,\bar r)&:=\alpha\e \ln \e_{\xi} \exp u\big(\tilde z\sqrt{\Delta_*+q} +\xi \sqrt{\rho-q}\big)
  \nonumber
  \\ &\qquad  +\frac12\Big(\frac{r+h^2}{2\kappa + r-\bar r} - \ln(2\kappa +r-\bar r) +rq-\bar r \rho+ \ln 2\pi\Big)
\label{barFdef} 
\end{align}
for $0\leq q<\rho$ and $0\leq \bar r \leq r$, where $\tilde z,\xi$ are independent random variables with normal law $\mathcal{N}(0,1)$. If we let $\theta= \tilde z\sqrt{\Delta_*+q} +\xi \sqrt{\rho-q}$, basic gaussian integration by parts with respect to $\tilde z$ and $\xi$ implies that the critical points of the function $\bar F$ must satisfy the following system of equations:
\begin{align}
    q &= \Psi(r,\bar r) := \frac{r+h^2}{(2\kappa+r-\bar r)^2},
    \label{CPeqQ}
    \\ 
    \rho&= \bar \Psi(r,\bar r) := \frac{1}{2\kappa+r-\bar r}+ \frac{r+h^2}{(2\kappa+r-\bar r)^2},
    \label{CPeqRho}
    \\ 
    r&= \Phi(q,\rho):= \alpha \e \Bigl(\frac{\e_\xi u'(\theta)\exp{u(\theta)}}{\e_\xi \exp{u(\theta)}}\Bigr)^2,
    \label{CPeqR}
    \\ 
    \bar r&= \mybar \Phi(q,\rho):= \alpha \e \frac{\e_\xi (u''(\theta)+u'(\theta)^2)\exp{u(\theta)}}{\e_\xi \exp{u(\theta)}}.
    \label{CPeqRb}
\end{align}
In addition, for any $0\leq q<\rho,$ we consider the function $F:\mathbb{R}^{2}\mapsto \mathbb{R}$ defined as
\begin{align}
	F(q,\rho)&:=\alpha\e \ln \e_{\xi} \exp u\big(\tilde z\sqrt{\Delta_*+q} +\xi \sqrt{\rho-q}\big)
	\nonumber\\
	&\qquad + \frac{1}{2}\Big(\ln(\rho-q) +h^2 (\rho-q) +\frac{\rho}{\rho-q} - 2\kappa\rho + \ln2\pi\Big).
	\label{Fdef}  
\end{align}
Observe that if $(q,\rho,r,\bar r)$ satisfies \eqref{CPeqQ}--\eqref{CPeqRb}, then   $\bar F(q,\rho,r,\bar r)$ and $F(q,\rho)$ match each other  as can be seen by  rewriting \eqref{CPeqQ} and \eqref{CPeqRho} as 
$$
2\kappa + r-\bar r = \frac{1}{\rho-q} \quad \mbox{and} \quad r+h^2 = \frac{q}{(\rho-q)^2}
$$
and plugging these into the second line in \eqref{barFdef} after writing $rq-\bar r \rho = -r(\rho-q)+(r-\bar r)\rho$. 
Here, if we express $r,\bar r$ in terms of $q,\rho$ in (\ref{CPeqR})--(\ref{CPeqRb}), the pair $(q,\rho)$ is also a critical point of $F$. Our first main result states that the limiting free energy in the ST model is equal to $F(q,\rho)$ as long as \eqref{CPeqQ}--\eqref{CPeqRb} have a unique solution.

	\begin{theorem}[Free energy of the ST spin glass]\label{ThmMain}
		For any $\kappa>0$, $h\in\Reals$, and $\Delta_*\geq 0$, if (\ref{CPeqQ})--(\ref{CPeqRb}) have a unique solution $(q,\rho,r,\bar r)$, then $$\lim_{N\to\infty}\e\bigl|F_N-F(q,\rho)\bigr|^2=0.$$
	\end{theorem}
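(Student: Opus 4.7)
The strategy is to decompose $F_N - F(q,\rho)$ into a fluctuation part $F_N - \e F_N$ and a bias part $\e F_N - F(q,\rho)$, and to control each separately. The fluctuation part should follow from a Gaussian concentration inequality for $F_N$ as a function of the i.i.d.\ disorder $(G,z)$: the log-concavity of the Gibbs measure $G_N$ (thanks to the concavity of $u$ and the quadratic confinement $-\kappa\|\sigma\|^2$) together with the growth bounds on $u'$ in Assumption~\ref{ass:growth_u} should give pointwise control on the gradient of $F_N$ in the disorder, and hence $\e|F_N-\e F_N|^2\to 0$. These self-averaging statements are announced for Section~\ref{sec:4} and I would take them as a black box here.

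The second step is to identify the deterministic limit of $\e F_N$. The central tool is the convergence of the overlaps $R_{1,1},R_{1,2},Q_{1,1},Q_{1,2}$ defined in \eqref{overlapsR}-\eqref{overlapsQ}. Log-concavity of $G_N$ (combined with Brascamp–Lieb type bounds) should give concentration of these overlaps under $\langle\cdot\rangle$; then the cavity/leave-one-out computation of Section~\ref{sec:5}, obtained by isolating either one coordinate $\sigma_i$ or one sample index $k$ and applying Gaussian integration by parts with respect to the relevant row or column of $G$, produces the fixed-point system \eqref{CPeqQ}-\eqref{CPeqRb} in the large-$N$ limit. Under the uniqueness assumption, this forces the $L^2$-convergence of the quadruple $(\langle R_{1,1}\rangle,\langle R_{1,2}\rangle,\langle Q_{1,1}\rangle,\langle Q_{1,2}\rangle)$ to $(\rho,q,r,\bar r)$.

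With the overlaps pinned down, I would compute $\lim \e F_N$ through an Aizenman–Sims–Starr-type increment scheme: write
\[
\e F_N \;=\; \frac{1}{N}\sum_{n=1}^{N}\bigl(\e\ln Z_n-\e\ln Z_{n-1}\bigr),
\]
use Gaussian integration by parts on each increment (again with respect to the Gaussian block being added or removed) to express it in terms of Gibbs correlators that depend only on the overlaps, and pass to the limit using the $L^2$-convergence obtained in the previous step. The resulting expression coincides with $\bar F(q,\rho,r,\bar r)$, and the algebraic identity noted in the excerpt—that $\bar F(q,\rho,r,\bar r)=F(q,\rho)$ whenever \eqref{CPeqQ}-\eqref{CPeqRb} hold—then delivers the target value $F(q,\rho)$. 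A Cesàro argument closes the computation since the increments converge.

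The main obstacle I anticipate lies in the cavity step: to justify Gaussian integration by parts and the resulting asymptotic closure, the concentration of the overlaps must be sharp enough that products and smooth functionals of the $\la\cdot\ra$-averages converge in $L^2$, not merely in probability. This in turn demands uniform-in-$N$ control on moments of $|u'(S_k(\sigma)+z_k)|$ and $|u''(S_k(\sigma)+z_k)|$ under $G_N$, which is precisely the role played by the growth bound $|u'(s)|\leq d(1+\sqrt{|u(s)|})$ and the $L^\infty$-bounds on higher derivatives in Assumption~\ref{ass:growth_u}, in tandem with the quadratic confinement $-\kappa\|\sigma\|^2$ that prevents mass escape to infinity. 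The uniqueness hypothesis on the fixed point is essential at the end: without it, the overlaps could accumulate at several limit points and the free energy may oscillate, breaking the passage to the limit of the Cesàro sum.
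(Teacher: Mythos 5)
Your overall decomposition (variance of $F_N$ vanishes by concentration; identify $\lim\e F_N$ via the limiting overlaps) matches the paper, and both arguments ultimately rest on Theorem~\ref{thm:conv_to_solution} under the uniqueness hypothesis. Where you genuinely diverge is in the computation of $\lim\e F_N$: you propose an Aizenman--Sims--Starr telescoping $\e F_N=\frac1N\sum_n(\e\ln Z_n-\e\ln Z_{n-1})$ with a Ces\`aro passage to the limit, whereas the paper interpolates in a continuous parameter $t$, replacing $u(x)$ by $u(\sqrt t\,x)$, shows via Gaussian integration by parts that $\phi_N'(t)\to\frac12(\bar r_t(\rho_t+\Delta_*)-r_t(q_t+\Delta_*))$, matches this with the $t$-derivative of the candidate functional evaluated along the $t$-dependent fixed points (using the criticality of $\bar F$ to kill the $\partial/\partial\rho_t,\dots$ terms), checks the explicit boundary values at $t=0$, and concludes by dominated convergence. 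Your route is viable but heavier than your sketch suggests: the increment $\e\ln Z_n-\e\ln Z_{n-1}$ mixes a spin cavity with roughly $\alpha$ sample cavities (since $M=\lfloor\alpha N\rfloor$ grows with $N$), and the spin-cavity term is not a bare integration by parts --- you must Taylor-expand $u\bigl(S_k(\sigma)+g_{kn}\sigma_n/\sqrt N+z_k\bigr)$ to second order, which is what brings in $u''$ and hence $Q_{1,1}$, and control the remainder using the bounded third and fourth derivatives of Assumption~\ref{ass:growth_u}; only then do the increments close in terms of $(R_{1,2},Q_{1,1},Q_{1,2})$ and converge to $\bar F(q,\rho,r,\bar r)=F(q,\rho)$. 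What the paper's interpolation buys is that the derivative $\phi_N'(t)$ is \emph{exactly} a polynomial in the overlaps (no Taylor remainder), at the cost of having to establish equi-Lipschitz continuity of $t\mapsto(\rho_t,q_t,\bar r_t,r_t)$ (Lemma~\ref{lem:add1}, Arzel\`a--Ascoli) and continuity down to $t=0$; what your route buys is that one never needs the $t$-family of models or the regularity of the fixed points in $t$, only the single-model overlap convergence at parameter $\alpha$.
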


The next result  establishes the convergence of the overlaps \eqref{overlapsR} and \eqref{overlapsQ}.

\begin{theorem}[Convergence of overlaps]\label{thm:conv_to_solution}
    For any $\kappa>0$, $h\in\Reals$, and $\Delta_*\geq 0$, if (\ref{CPeqQ})--(\ref{CPeqRb}) have a unique solution $(q,\rho,r,\bar r)$, then under $\e\la\,\cdot\,\ra$ and as $N$ diverges,
    \begin{equation*}
        R_{1,1} \xrightarrow{L^2} \rho, \quad R_{1,2}\xrightarrow{L^2}q, \quad Q_{1,1} \xrightarrow{L^2} \bar r, \quad \mbox{and} \quad Q_{1,2}\xrightarrow{L^2}r.
    \end{equation*}
\end{theorem}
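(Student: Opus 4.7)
The plan is to combine three ingredients: (a) concentration of each overlap under both the Gibbs and quenched measures, (b) asymptotic identities among the expected overlaps derived from gaussian integration by parts (the cavity / leave-one-out method), and (c) the hypothesis that the fixed-point system (\ref{CPeqQ})--(\ref{CPeqRb}) admits a unique solution. Since $L^{2}$ convergence under $\mathbb{E}\langle\,\cdot\,\rangle$ reduces, by the triangle inequality and concentration, to convergence of the four scalars $\mathbb{E}\langle R_{1,1}\rangle,\mathbb{E}\langle R_{1,2}\rangle,\mathbb{E}\langle Q_{1,1}\rangle,\mathbb{E}\langle Q_{1,2}\rangle$ to $\rho,q,\bar r,r$ respectively, the theorem will follow once these four scalars are shown to satisfy the defining system asymptotically.

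The first ingredient is the concentration established in Section~\ref{sec:4}. Since $H_N$ is $2\kappa$-uniformly log-concave in $\sigma$, Brascamp--Lieb / Bakry--\'Emery type inequalities control Gibbs fluctuations of each overlap, while gaussian Poincar\'e applied to $G$ and $z$ controls quenched fluctuations, jointly yielding
\begin{equation*}
\mathbb{E}\bigl\langle (O-\mathbb{E}\langle O\rangle)^{2}\bigr\rangle \longrightarrow 0
\end{equation*}
for each $O\in\{R_{1,1},R_{1,2},Q_{1,1},Q_{1,2}\}$. Assumption~\ref{ass:growth_u}, and in particular the bound $|u'(s)|\le d(1+\sqrt{|u(s)|})$, provides the moment control on $A_k,B_k$ needed for this step.

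The second and central ingredient is a cavity derivation of the four asymptotic identities, performed by gaussian integration by parts with respect to $G$ in two complementary directions. First, differentiating with respect to a column $g_{\cdot\,i}$ of $G$ (leaving the coordinate $\sigma_i$ out) isolates an effective one-dimensional gaussian for $\sigma_i$ with precision $2\kappa+Q_{1,1}-Q_{1,2}$ and mean driven by $h$ together with a cavity field of variance $Q_{1,2}$. Averaging $\mathbb{E}\langle\sigma_i\rangle^{2}$ and $\mathbb{E}\langle\sigma_i^{2}\rangle$ over $i$, then using the concentration of Step one to replace random coefficients by their expectations, yields (\ref{CPeqQ}) for $\mathbb{E}\langle R_{1,2}\rangle$ and (\ref{CPeqRho}) for $\mathbb{E}\langle R_{1,1}\rangle$. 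Second, integrating by parts in the row $g_{k\,\cdot}$ for a fixed sample $k$ exploits that $(S_k^{1},S_k^{2})$ is, conditionally on $(\sigma^{1},\sigma^{2})$, centered gaussian with covariance determined by $R_{1,1}$ and $R_{1,2}$. Substituting the limits $\rho,q$ and summing over $k\le M$, the definitions (\ref{defs:AB}) and (\ref{overlapsQ}) identify $\mathbb{E}\langle Q_{1,1}\rangle$ with $\mybar\Phi(q,\rho)$ and $\mathbb{E}\langle Q_{1,2}\rangle$ with $\Phi(q,\rho)$, which are (\ref{CPeqRb}) and (\ref{CPeqR}).

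With the four asymptotic identities in hand, tightness of the quadruple $(\mathbb{E}\langle R_{1,1}\rangle,\mathbb{E}\langle R_{1,2}\rangle,\mathbb{E}\langle Q_{1,1}\rangle,\mathbb{E}\langle Q_{1,2}\rangle)$, supplied by the moment bound (\ref{convNorm}) and by Assumption~\ref{ass:growth_u}, lets me extract subsequential limits; any such limit solves (\ref{CPeqQ})--(\ref{CPeqRb}) and hence equals $(q,\rho,\bar r,r)$ by the uniqueness hypothesis, giving convergence of the whole sequence and, combined with Step one, the desired $L^{2}$ convergence. The main obstacle I anticipate is the rigorous execution of the row-cavity step for $Q_{1,1}$ and $Q_{1,2}$: these overlaps involve $u'(S_k)$ and $u''(S_k)$ which under Assumption~\ref{ass:growth_u} are only controlled in an $L^{2}$ sense through $|u'|\lesssim 1+\sqrt{|u|}$, so one must bound the remainder terms produced by successive gaussian integration by parts and legitimately swap the fluctuating $S_k^{\ell}$ for its limiting law $\tilde z\sqrt{\Delta_{*}+q}+\xi_{\ell}\sqrt{\rho-q}$ uniformly in $k\le M$. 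This is precisely the step where relaxing from linear growth of $u$ (as in Talagrand) to quadratic growth is nontrivial, and it is here that the log-concavity of the posterior must be exploited to absorb the moments of $u(S_k+z_k)$.
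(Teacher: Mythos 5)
Your overall architecture --- concentration reduces the theorem to convergence of the four expected overlaps, cavity/integration-by-parts identities show every subsequential limit solves (\ref{CPeqQ})--(\ref{CPeqRb}), and the uniqueness hypothesis closes the argument --- is exactly the paper's (Sections~\ref{sec:4} and \ref{sec:5}). The column step also matches in substance: the paper obtains (\ref{CPeqQ}) and (\ref{CPeqRho}) in Proposition~\ref{prop:til_fix_point} by direct gaussian integration by parts in $\sigma_1$ and $(g_{k1})_{k\le M}$ followed by overlap concentration, which is the rigorous form of your ``effective gaussian with precision $2\kappa+Q_{1,1}-Q_{1,2}$'' picture; no genuine cavity in $N$ is needed there.

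The one step that would fail as written is the row step. You assert that $(S_k^1,S_k^2)$ is, conditionally on the replicas, centered gaussian with covariance determined by $R_{1,1}$ and $R_{1,2}$ --- but under the Gibbs measure the replicas are functions of the row $g_{k\cdot}$, so this is false. The paper's fix is the $M$-cavity measure $\langle\,\cdot\,\rangle_c$ obtained by deleting the term $u(S_1(\sigma)+z_1)$ from the Hamiltonian: under $\langle\,\cdot\,\rangle_c$ the row $g_1$ is independent of $\sigma$, rotational invariance gives $S_1(\sigma)\sim g\sqrt{R_{1,1}}$, and Lemma~\ref{lem:gaussian_approx} (via the continuous mapping theorem and the concentration of the covariance matrix) converts cavity expectations of bounded continuous functions of $(S_1^\ell+z_1)_{\ell}$ into expectations over $\theta_\ell=\tilde z\sqrt{\Delta_*+q}+\xi_\ell\sqrt{\rho-q}$. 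The remaining difficulty you flag --- returning from cavity to Gibbs expectations --- is handled in the paper not by bounding remainders of successive integrations by parts but by writing $\langle f\rangle=\langle f e^{u_1}\rangle_c/\langle e^{u_1}\rangle_c$ and expanding $1/\langle e^{u_1}\rangle_c$ as a geometric series in $1-\langle e^{u_1}\rangle_c\in[0,1)$ (possible precisely because $u\le 0$), with the tail controlled by the moment bounds of Lemma~\ref{lem:uk_mean} and Cauchy--Schwarz; this is also where the growth condition $|u'|\le d(1+\sqrt{|u|})$ enters. Without this (or an equivalent) device, your plan does not actually produce (\ref{CPeqR}) and (\ref{CPeqRb}), so you should regard the geometric-series/cavity-measure mechanism as a required missing ingredient rather than a technical remainder estimate.
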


From these and with the help of the identities \eqref{add:eq-1} and \eqref{add:eq-3}, we can now compute the limiting free energy and the mean-square error associated to our regression task.

\begin{theorem}[Free energy and mean-square error in mismatched regression]\label{add:thm1}
	Assume the convergence \eqref{convNorm} for the norm of the ground-truth coefficients towards some deterministic $\sqrt\gamma\geq 0$. For $\kappa>0$, $h=2\kappa \sqrt{\gamma}$, and $\Delta_*\geq 0$, if (\ref{CPeqQ})--(\ref{CPeqRb}) have a unique solution $(q,\rho,r,\bar r)$, then
	\begin{align}\label{add:thm1:eq1}
		\lim_{N\to\infty}\e\bigl|\tilde F_N-(-\kappa \gamma+F(q,\rho))\bigr|=0
	\end{align}
and
\begin{align}\label{add:thm1:eq2}
\lim_{N\to\infty}\frac{1}{N}\e\|\hat x-x^*\|^2=\lim_{N\to\infty} \frac1N \e \| \langle x\rangle^\sim-x^*\|^2=q.
\end{align}

\end{theorem}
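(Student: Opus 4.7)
The strategy is to leverage the identities from Section~\ref{sec:2} to reduce the mismatched regression problem to the generalized ST model, and then apply Theorems~\ref{ThmMain} and~\ref{thm:conv_to_solution}. The only additional work is to handle the fact that the effective ST parameter $h_N := 2\kappa\sqrt{\gamma_N}$ is random in this reduction, whereas the two theorems above are stated for deterministic $h$.

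First I would refine the computation leading to \eqref{add:eq-1} and \eqref{add:eq-3} into a \emph{conditional distributional} identity: conditionally on $x^*$, the rotational invariance $GO^\intercal \stackrel{d}{=} G$ together with the symmetry of $z$ give
\begin{align*}
\tilde F_N \stackrel{d}{=} -\kappa\gamma_N + F_N(h_N), \qquad \e\|\hat x - x^*\|^2 = N\,\e\langle R_{1,2}\rangle_{h_N},
\end{align*}
where $F_N(h)$ and $\langle\,\cdot\,\rangle_h$ denote the ST free energy \eqref{add:eq-2} and Gibbs expectation at parameter $h$, computed with an independent Gaussian matrix and noise. The second identity uses $\|\langle\sigma\rangle\|^2 = \sum_i \langle\sigma_i^1\sigma_i^2\rangle = N\langle R_{1,2}\rangle$ after introducing a replica.

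Next I would establish uniform Lipschitz continuity in $h$ of both $F_N(h)$ and $\langle R_{1,2}\rangle_h$ on compact intervals. For the free energy, $|\partial_h F_N| = N^{-1}|\langle \sum_i\sigma_i\rangle| \leq \langle R_{1,1}\rangle^{1/2}$ by Cauchy--Schwarz, and the right-hand side is uniformly integrable for $h$ in a compact set by the concentration results of Section~\ref{sec:4}. For the overlap, direct differentiation gives
\begin{align*}
\partial_h\langle R_{1,2}\rangle_h = -2\,\cov_{\text{Gibbs}}\bigl(R_{1,2},\,\textstyle\sum_i\sigma_i^1\bigr),
\end{align*}
which is $O(1)$ uniformly by Cauchy--Schwarz combined with two consequences of the $2\kappa$-strong log-concavity of the Gibbs measure (from the concavity of $u$ plus the coercive $-\kappa\|\sigma\|^2$ term): the Brascamp--Lieb bound $\var_{\text{Gibbs}}(\sum_i\sigma_i^1) \leq N/(2\kappa)$, and the $O(1/N)$ self-averaging $\var_{\text{Gibbs}}(R_{1,2}) \leq C\langle R_{1,1}\rangle/N$ that follows from Brascamp--Lieb applied on two replicas or from the concentration estimates of Section~\ref{sec:4}.

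Finally, applying Theorems~\ref{ThmMain} and~\ref{thm:conv_to_solution} at the deterministic value $h := 2\kappa\sqrt\gamma$ and combining with the triangle inequalities
\begin{align*}
\e|\tilde F_N - (-\kappa\gamma + F(q,\rho))| &\leq \kappa\,\e|\gamma_N - \gamma| + C\,\e|h_N - h| + \e|F_N(h) - F(q,\rho)|, \\
\bigl|N^{-1}\e\|\hat x - x^*\|^2 - q\bigr| &\leq C'\,\e|h_N - h| + |\e\langle R_{1,2}\rangle_h - q|,
\end{align*}
yields \eqref{add:thm1:eq1} and \eqref{add:thm1:eq2}: the $\gamma_N$ and $h_N$ terms vanish because \eqref{convNorm} gives $\gamma_N \to \gamma$ in $L^1$ (using the uniform fourth-moment bound to pass from $L^2$ on $\|x^*\|/\sqrt N$ to $L^1$ on its square), and hence $h_N \to h$ in $L^1$; the remaining terms vanish by Theorems~\ref{ThmMain} and~\ref{thm:conv_to_solution} (which give the stronger $L^2$ convergences). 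The main obstacle is the Lipschitz bound on $\langle R_{1,2}\rangle_h$: its derivative pairs a bounded quadratic observable with the unbounded linear statistic $\sum_i\sigma_i^1$, so genuinely using the log-concavity of the Gibbs measure via Brascamp--Lieb, together with the overlap self-averaging of Section~\ref{sec:4}, is what keeps this derivative $O(1)$ and not diverging in $N$; once this is secured, everything else is routine bookkeeping on the already-established limits.
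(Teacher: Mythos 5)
Your proposal is correct and follows essentially the same route as the paper: reduce to the ST model with the random parameter $h_N=2\kappa\sqrt{\gamma_N}$ via rotational invariance, control the replacement of $h_N$ by the deterministic $h$ through Brascamp--Lieb/log-concavity bounds on the $h$-derivatives of $F_N$ and of $\langle R_{1,2}\rangle$ (the paper phrases this as an interpolation in $t$ plus the mean value theorem, which is the same estimate), and then invoke Theorems~\ref{ThmMain} and~\ref{thm:conv_to_solution}. Your use of the joint distributional identity $(\gamma_N,\tilde F_N)\stackrel{d}{=}(\gamma_N,-\kappa\gamma_N+F_N(h_N))$ slightly streamlines the paper's three-term decomposition with its separate conditional-variance step, but the substance is identical.
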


\begin{example}[Quadratic loss]\rm
Let $u(s)= -{s^2}/(2\Delta),$ where $\Delta>0$ is not necessarily equal to the true gaussian noise variance $\Delta_*.$
Obviously $u$ satisfies Assumption \ref{ass:growth_u}. More importantly, it can be checked that \eqref{CPeqQ}--\eqref{CPeqRb} have a unique solution given by 
	\begin{equation}\label{eq:vals_overlaps}
		\begin{cases}
			q = \frac{c^2(h^2(\Delta+c)^2+\Delta_*\alpha)}{(\Delta+c)^2-\alpha c^2} , \\
			\rho = q + c,\\
			r= \frac{\alpha(\Delta_*+q)}{(\Delta+\rho-q)^2},\\
			\bar r= r- \frac{\alpha}{\Delta+\rho-q},
		\end{cases} 
	\end{equation}
	where the constant
	\begin{align*}
		c := \frac{\sqrt{(2\kappa\Delta+\alpha-1)^2 + 8 \kappa\Delta}-(2\kappa\Delta+\alpha-1)}{4\kappa} .  
	\end{align*}
	To see this, note that (\ref{CPeqR}) and (\ref{CPeqRb}) can be written as
	\begin{align}
		r&= \frac{\alpha(\Delta_*+q)}{(\Delta+\rho-q)^2} \quad \mbox{and} \quad
		\bar r= r- \frac{\alpha}{\Delta+\rho-q}.
		\label{CPeqRbquad}
	\end{align}
	Take $a = \rho - q$ and $a' = r- \bar r$. From the difference between \eqref{CPeqQ} and \eqref{CPeqRho} and that of \eqref{CPeqR} and \eqref{CPeqRb}, we arrive at
	\begin{equation}\label{eq:eqs_aap}
		\begin{cases}
			a a' + 2 \kappa a = 1, \\
			a a' + \Delta a' = \alpha,
		\end{cases}
	\end{equation}
	from which we can eliminate $a'$ to get a single equation for $a$,
	\begin{equation*}
		2 \kappa a^2 + \left(2\kappa \Delta + \alpha -1 \right) a - \Delta = 0.
	\end{equation*}
	It is easy to see that the only non-negative solution to this equation is $a = c$ and thus,
	$
	a' = \alpha/(\Delta + c).
	$
	Finally, using these values of $a$ and $a'$ and the second equation of \eqref{CPeqRbquad} leads to
	\begin{equation}
		q = \frac{c^2(h^2(\Delta+c)^2+\Delta_*\alpha)}{(\Delta+c)^2-\alpha c^2}. \label{mmse_ridge}
	\end{equation}
	Note that $q$ is positive since \eqref{eq:eqs_aap} implies that $aa'<1$ and $aa'<\alpha$, which ensures that $(\Delta+c)^2/c^2=(a a')^2 < \alpha$. 
	From this $q$, $(\rho,r,\bar r)$ can then be uniquely determined through the second to the fourth equations in \eqref{eq:vals_overlaps}.
	
	Surprisingly, if we parametrize the posterior distribution by an ``inverse temperature'' hyperparameter $\beta>0$, namely, the original Gibbs measure $\tilde G_{N}$ in \eqref{gibbs} becomes
	\begin{align*}
		\tilde G_{N,\beta}(dx)=\frac{\exp (\beta \tilde H_N(x))\,dx}{\int \exp(\beta \tilde H_N(x'))\,dx'},
	\end{align*}
meaning that $\Delta$ and $\kappa$ are replaced by $\Delta/\beta$ and $\beta \kappa$ in the original model, then the resulting constant $q_\beta$, i.e., the mean-square error, remains the same as the original $q$ for any inverse temperature $\beta>0$. This can be easily seen from the above equations (for quadratic $u$). At the same time, formula  \eqref{mmse_ridge} matches the one for ridge regression at zero temperature (see, e.g., equation (26) in \cite{thrampoulidis2018precise}). This non-trivial fact is due to the rotational invariance of the model following from the isotropy of the covariates. But as mentioned already in the introduction, this is not a priori evident in the high-dimensional regime we consider here, where $M$ and $N$ are comparable. It would be interesting to explore in future research for which settings posterior mean and mode (i.e., sampling and optimizing) yield the same reconstruction performance.
\end{example}

\section{Concentration} \label{sec:4}
In this section we will prove a number of useful concentration results for overlaps and the free energy. For this, it will be convenient to restate the growth conditions in Assumption \ref{ass:growth_u} in a more general way, more suitable for the proof. Take $n\geq 1$ and consider two functions $\phi,\psi : \Reals^n\mapsto\Reals$ which together with the function $u : \Reals\mapsto\Reals$ satisfy the following set of conditions. Namely, we suppose that there exist $d>0$ and $\eta_0>0$ such that the following holds:

\begin{enumerate}
    \item[(G-1)] The function $u(s)$ satisfies
    \begin{align}\label{G1gen}
        0\geq u(s)\geq -d(1+s^2) \quad \mbox{and} \quad |u'(s)|\leq d(1+|s|).
    \end{align}

    \item[(G-2)] The functions $\phi,\psi : \mathbb{R}^n\mapsto \mathbb{R}$ satisfy
  \begin{align}\label{lip}
      \max\bigl(|\phi(s)-\phi(s')|,|\psi(s)-\psi(s')|\bigr)
      &\leq d\bigl(1+\|s\|+\|s'\|\bigr)\|s-s'\|,
        \\
      \label{phipsi}
      \max\bigl(|\phi(s)|,|\psi(s)|\bigr)&\leq d\bigl(1+\|s\|^2\bigr),
        \\
        \label{bound}
      \max_{1\leq \ell,\ell'\leq n}\bigl(|\partial_{s_\ell s_{\ell'}}\phi(s)|,|\partial_{s_\ell s_{\ell'}}\psi(s)|\bigr)&\leq d.
  \end{align}
  
    \item[(G-3)] The following inequality holds:
  \begin{align*}
    \sup_{y\in \mathbb{R}^n}\Bigl(\sum_{\ell \le n} u(y^\ell)+\eta_0 |\psi(y^1,\ldots,y^n)|\Bigr)\leq \alpha d.
  \end{align*}
\end{enumerate} 
The last assumption (G-3) will be used in an equivalent form,
\begin{align}\label{bound2eq}
    \max_{y^1,\ldots,y^n\in \mathbb{R}^M}\sum_{k\le M}\Bigl(\sum_{\ell \le n} u(y_k^\ell)+\eta_0 |\psi(y_k^1,\ldots,y_k^n)|\Bigr)\leq N d.
\end{align} 
Also, the assumptions (\ref{lip}), (\ref{phipsi}) may look redundant given (\ref{bound}), but we state them for convenience.

We will later specialize these assumptions for the following two specific choices:
    \begin{enumerate}
        \item[$\bullet$] For $n=1$: $\phi(s)=s^2$ and $\psi(s)=u''(s)+u'(s)^2$, 
        \item[$\bullet$] For $n=2$:  $\phi(s_1,s_2)=s_1s_2$ and $\psi(s_1,s_2)=u'(s_1)u'(s_2)$.
    \end{enumerate} 
    One can easily see that Assumption \ref{ass:growth_u} on $u$ implies (G-1)--(G-3) in both cases, by possibly increasing the value of $d$.

For $\sigma^1,\ldots,\sigma^n\in \mathbb{R}^N$ denote $\vec\sigma=(\sigma^1,\ldots,\sigma^n)\in \mathbb{R}^{Nn}$ and set $\|\vec\sigma\|=(\|\sigma^1\|^2+\cdots+\|\sigma^n\|^2)^{1/2}$. Consider any $\lambda$ and $\eta$ satisfying $|\lambda|,|\eta|\leq \min(\eta_0,\kappa/(2d))$.    
For any $\sigma\in \mathbb{R}^N$ recall $S_k(\sigma):=(\bar G\sigma)_k$ and define
\begin{align*}
  X_N(\sigma):=\sum_{k\le M}u\bigl(S_k(\sigma)+z_k\bigr).
\end{align*}
For $\vec\sigma\in \mathbb{R}^{Nn}$, we define an Hamiltonian between ``coupled replicas'' $(\sigma^\ell)_{l \leq n}$:
\begin{align}\label{HamXLE}
  X_{N,\lambda,\eta}(\vec\sigma):=\sum_{\ell\le n} X_N(\sigma^\ell) + h\sum_{\ell\le n}\sum_{i\le N}\sigma_i^\ell-\kappa \|\vec\sigma\|^2+\lambda NR(\vec\sigma)+\eta NQ(\vec\sigma),
\end{align}
where the generalized overlaps are
\begin{align}
  R(\vec\sigma)&:=\frac{1}{N}\sum_{i\le N}\phi(\sigma_i^1,\ldots,\sigma_i^n),\label{gene_overlap_R}\\
  Q(\vec\sigma)&:=\frac{1}{N}\sum_{k\le M}\psi\bigl(S_k(\sigma^1)+z_k,\ldots,S_k(\sigma^n)+z_k\bigr),\label{gene_overlap_Q}
\end{align}
for functions $\phi$ and $\psi$ verifying assumptions (G-1)--(G-3). Define the (non-averaged) free energy associated to $X_{N,\lambda,\eta}$ by
\begin{align*}
  F_N(\lambda,\eta)&:=\frac{1}{N}\ln Z_{N,\lambda,\eta}
    \quad\mbox{where}\quad
  Z_{N,\lambda,\eta}:=\int_{\mathbb{R}^{Nn}}\exp{X_{N,\lambda,\eta}(\vec\sigma)}\,d\vec\sigma.
\end{align*}
Note that assumptions \eqref{G1gen}, \eqref{lip}, and \eqref{bound2eq} as well as $|\lambda|,|\eta|\leq \min(\eta_0,\kappa/(2d))$ ensure that this quantity is a.s. finite in $G$ and $z=(z_k)_{k\le M}$
  Denote by $\la \,\cdot\,\ra_{\lambda,\eta}$ the expectation with respect to the Gibbs measure proportional to $\exp X_{N,\lambda,\eta}(\vec\sigma)$:
\begin{align*}
 \la g(\vec\sigma)\ra_{\lambda,\eta} :=\frac 1{Z_{N,\lambda,\eta}}\int_{\mathbb{R}^{Nn}} g(\vec\sigma) \exp{X_{N,\lambda,\eta}(\vec\sigma)}\,d\vec\sigma.
\end{align*}

From now on, when we say that ``there exist constants $\varepsilon, K,K', D,L$, etc.'', these constants are functions of all the parameters of the model $(d, n, \alpha, \Delta_*, \kappa, \eta_0,h)$, which are bounded uniformly over compacts. As before, $\mathbb{E}$ is the expectation with respect to the gaussians $(G,z)$.

\begin{proposition}\label{ThmOConc}
  Under the above assumptions, there exists some $K>0$ such that 
  \begin{align}
    \e \Bigl\la \Bigl|R(\vec\sigma)-\e\la R(\vec\sigma)\ra_{0,0}\Bigr|^2\Bigr\ra_{0,0}
    &\leq \frac{K}{\sqrt{N}} \quad \mbox{and} \quad \e \Bigl\la \Bigl|Q(\vec\sigma)-\e\la Q(\vec\sigma)\ra_{0,0}\Bigr|^2\Bigr\ra_{0,0}
    \leq \frac{K}{\sqrt{N}}.
    \label{ThMRcond1}
  \end{align}
    Moreover, for any $k\geq 1$, there exists  some $C_k>0$ such that 
  \begin{align*}
        \e\bigl\la |R(\vec\sigma)|^k\bigr\ra_{0,0}\leq C_k \quad \mbox{ and }\quad   
        \e\bigl\la |Q(\vec\sigma)|^k\bigr\ra_{0,0} \leq C_k.
  \end{align*}   
\end{proposition}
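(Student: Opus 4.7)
I would decompose
$$
\e\la(R-\e\la R\ra)^2\ra_{0,0}\leq \e\la(R-\la R\ra)^2\ra_{0,0}+\e(\la R\ra_{0,0}-\e\la R\ra_{0,0})^2
$$
(with the analogous inequality for $Q$), and treat the Gibbs-fluctuation and disorder-fluctuation pieces separately. The structural input, used everywhere, is that $X_{N,0,0}$ is $2\kappa$-strongly concave in $\vec\sigma$: the block contribution $\bar G^\intercal\mathrm{diag}(u''(S_k^\ell+z_k))\bar G$ coming from $\sum_k u(S_k^\ell+z_k)$ is negative semi-definite by concavity of $u$, and $-\kappa\|\vec\sigma\|^2$ contributes $-2\kappa I$. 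Consequently, the Gibbs measure $\la\cdot\ra_{0,0}$ is $2\kappa$-strongly log-concave on $\mathbb{R}^{Nn}$ and the Brascamp--Lieb (Poincar\'e) inequality
$$\la(f-\la f\ra)^2\ra_{0,0}\leq \frac{1}{2\kappa}\,\la\|\nabla_{\vec\sigma} f\|^2\ra_{0,0}$$
holds for every smooth $f$, and the same is true for the perturbed measure $\la\cdot\ra_{\lambda,0}$ as long as $|\lambda|$ is smaller than the threshold $\kappa/(2d)$ prescribed in the setup, since the Hessian of $\lambda NR$ is uniformly bounded in operator norm by $|\lambda|d$ thanks to \eqref{bound}.

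\textbf{Moment bounds and Gibbs concentration.} The moment bounds on $R$ and $Q$ reduce, via \eqref{phipsi}, to $\e\la(\|\vec\sigma\|^2/N)^k\ra_{0,0}\leq C_k$, since $|R|\leq d(1+\|\vec\sigma\|^2/N)$ and $|Q|\leq d(\alpha+2\|\bar G\|_{op}^2\|\vec\sigma\|^2/N+2n\|z\|^2/N)$, together with Cauchy--Schwarz and standard Gaussian moments of $\|\bar G\|_{op}$ and $\|z\|^2$. To bound the $\vec\sigma$-moments, I use strong log-concavity: Brascamp--Lieb applied to the coordinate functions gives $\la\|\vec\sigma\|^2\ra\leq\|\la\vec\sigma\ra\|^2+Nn/(2\kappa)$, and $\|\la\vec\sigma\ra\|$ is controlled by the mode $\vec\sigma^\star$, which by the first-order condition $2\kappa\sigma^{\star,\ell}=h\mathbf{1}+\bar G^\intercal u'(\bar G\sigma^{\star,\ell}+z)$ and the linear growth of $u'$ from \eqref{G1gen} satisfies $\|\vec\sigma^\star\|^2\leq C(N+\|\bar G\|_{op}^2\|z\|^2)$; higher moments follow from the sub-Gaussian tails of strongly log-concave measures. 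For the Gibbs-fluctuation piece, Brascamp--Lieb applied to $R$ with $\|\nabla\phi(s)\|\leq d(1+2\|s\|)$ from \eqref{lip} yields $\|\nabla R\|^2\leq C(1+\|\vec\sigma\|^2/N)/N$, hence $\e\la(R-\la R\ra)^2\ra\leq C/N$. Applied to $Q$, the chain rule through $S_k$ produces an extra $\|\bar G\|_{op}^2$-factor: $\|\nabla Q\|^2\leq C\|\bar G\|_{op}^2(1+\|\bar G\|_{op}^2\|\vec\sigma\|^2/N+\|z\|^2/N)/N$, and Cauchy--Schwarz separates the $\|\bar G\|_{op}^{2k}$-factors from the Gibbs averages to give $\e\la(Q-\la Q\ra)^2\ra\leq C/N$. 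Both Gibbs-fluctuation bounds are strictly better than the claimed $K/\sqrt N$.

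\textbf{Disorder concentration and conclusion.} Writing $\la R\ra_{0,0}=\partial_\lambda F_N(0,0)$, the map $g(\lambda):=F_N(\lambda,0)$ is convex in $\lambda$ (reweighting adds $\lambda NR$ to the Hamiltonian), and $\bar g:=\e g$ is convex with a uniform second-derivative bound $\bar g''(\lambda)=N\e\la(R-\la R\ra)^2\ra_\lambda\leq C$ on $[-\lambda_0,\lambda_0]$ coming from the Gibbs step above applied to $\la\cdot\ra_{\lambda,0}$. A standard quantitative convexity argument then gives, for every $\epsilon\in(0,\lambda_0]$,
$$
|g'(0)-\bar g'(0)|\leq C\epsilon+\frac{2}{\epsilon}\max_{\lambda\in\{-\epsilon,0,\epsilon\}}|g(\lambda)-\bar g(\lambda)|.
$$
Gaussian Poincar\'e applied to $F_N(\lambda,0)$ as a function of the standard Gaussians $(G,z)$, with $\|\nabla_{G,z}F_N\|^2$ expanded as a Gibbs average and bounded in expectation by $C/N$ using \eqref{G1gen} and the moment bounds, yields $\e(g(\lambda)-\bar g(\lambda))^2\leq C/N$ for each fixed $\lambda$. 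Squaring, taking $\e$, and optimizing at $\epsilon\sim N^{-1/4}$ produces $\e(\la R\ra_{0,0}-\e\la R\ra_{0,0})^2\leq C/\sqrt N$. The same argument with the probe $\eta$ in place of $\lambda$ handles $\la Q\ra$. Adding the Gibbs piece gives \eqref{ThMRcond1}. The main obstacle is the uniformity bookkeeping: the second-derivative bound $\bar g''(\lambda)\leq C$ must hold on a full neighborhood of $0$, which forces one to track that strong log-concavity of $\la\cdot\ra_{\lambda,0}$ survives under the perturbation (hence the range $|\lambda|\leq \kappa/(2d)$), and the $\|\bar G\|_{op}$-factors appearing inside Gibbs averages when differentiating through $S_k$ must be decoupled by repeated Cauchy--Schwarz with the moment bounds from the first step.
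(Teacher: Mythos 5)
Your proposal is correct and follows essentially the same route as the paper's proof: the same decomposition into Gibbs fluctuations (controlled by Brascamp--Lieb under the $2\kappa$-strong log-concavity coming from the concavity of $u$) and disorder fluctuations (controlled by convexity of $\lambda\mapsto F_N(\lambda,0)$, Gaussian Poincar\'e for the free energy, and optimization at $\epsilon\sim N^{-1/4}$), with the paper merely routing the moment bounds through exponential moments and inf-convolution truncations rather than through the mode and direct Poincar\'e. The one point to make explicit is that for the $\eta$-probe the Hessian of $\eta N Q(\vec\sigma)$ is only bounded by $dn\|\bar G\|_{op}^2|\eta|$, so strong log-concavity of $\langle\cdot\rangle_{0,\eta}$ does not hold uniformly over the disorder; you must restrict to the event $\{\|\bar G\|_{op}\le K\}$ (probability $1-O(e^{-N})$) and absorb the complement with your moment bounds, exactly as the paper does with its event $E_N$.
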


In the rest of this section, we will establish Proposition~\ref{ThmOConc}. To fix our notation, for any real $M\times N$-matrix $A$, define the Frobenius and operator norms, respectively, as
\begin{align*}
    \|A\|:=\Bigl(\sum_{k\le M}\sum_{i\le N}A_{ki}^2\Bigr)^{1/2} \quad \mbox{and} \quad \|A\|_{op}:=\sup_{y\in \mathbb{R}^N:\|y\|\leq 1}\|Ay\|.
\end{align*}
Recall $\| \cdot\|$ applied to a vector is the $L_2$ norm and note that $\|A\|_{op}\leq \|A\|$.



\subsection{Control of the self-overlap}

Let $\bar z:=z/\sqrt{N}$ and recall also that $\bar G:=G/\sqrt{N}$.

\begin{lemma}\label{lem1}
     {There exist $\varepsilon>0$ and $K\geq 1$ such that if 
    $
        t,|\lambda|,|\eta|\leq \varepsilon
    $}
    then
  \begin{align*}
      \bigl\la \exp{ t\|\vec\sigma\|^2}\bigr\ra_{\lambda,\eta}\leq \exp{KN(\|\bar z\|^2+\|\bar G\|_{op}^2+1+h^2)}.
  \end{align*}      
\end{lemma}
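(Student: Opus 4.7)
The plan is to bound the numerator and denominator of
\begin{align*}
\bigl\la \exp(t\|\vec\sigma\|^2)\bigr\ra_{\lambda,\eta} = \frac{\int_{\mathbb{R}^{Nn}} \exp\bigl(X_{N,\lambda,\eta}(\vec\sigma) + t\|\vec\sigma\|^2\bigr)\,d\vec\sigma}{\int_{\mathbb{R}^{Nn}} \exp X_{N,\lambda,\eta}(\vec\sigma)\,d\vec\sigma}
\end{align*}
separately. The intuition is that, once $t,|\lambda|,|\eta|$ are small enough, the term $-\kappa\|\vec\sigma\|^2$ in $X_{N,\lambda,\eta}$ dominates the upper bound on the numerator, reducing it to a Gaussian integral; a matching lower bound on the denominator will be obtained by restricting integration to a ball of radius $\sqrt{N}$, whose volume is $\exp(\mathcal{O}(N))$ by Stirling.

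For the numerator, I would first exploit $u\le 0$ together with (\ref{bound2eq}), the equivalent form of (G-3), which gives $\eta_0\sum_{k\le M}|\psi(\cdots)|\le Nd + \sum_\ell|X_N(\sigma^\ell)|$. This is the cleanest way to combine the $\sum_\ell X_N(\sigma^\ell)$ and $\eta NQ(\vec\sigma)$ contributions, yielding
\begin{align*}
\sum_\ell X_N(\sigma^\ell) + \eta NQ(\vec\sigma)\le \frac{|\eta|}{\eta_0}\,Nd
\end{align*}
for $|\eta|\le\eta_0$. Next, the growth bound (\ref{phipsi}) gives $|\lambda NR(\vec\sigma)|\le|\lambda|d(N+\|\vec\sigma\|^2)$, while Young's inequality absorbs the linear-in-$\sigma$ term: $h\sum_{\ell,i}\sigma_i^\ell\le(\kappa/4)\|\vec\sigma\|^2+h^2 Nn/\kappa$. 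Choosing $\varepsilon$ small enough that $t+|\lambda|d+\kappa/4\le 3\kappa/4$, the exponent in the numerator will be at most $-(\kappa/4)\|\vec\sigma\|^2 + CN(1+h^2)$, so the numerator is bounded by $\exp(CN(1+h^2))\cdot(4\pi/\kappa)^{Nn/2}$.

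For the denominator, I will restrict integration to $B=\{\vec\sigma:\|\vec\sigma\|^2\le N\}$. The lower bound $u(s)\ge -d(1+s^2)$ from (G-1), combined with $\|\bar G\sigma^\ell\|^2\le\|\bar G\|_{op}^2\|\sigma^\ell\|^2$ and $\|z\|^2=N\|\bar z\|^2$, yields $|\sum_\ell X_N(\sigma^\ell)|\le C N(1+\|\bar G\|_{op}^2+\|\bar z\|^2)$ on $B$. The remaining contributions $\eta NQ$ (via (\ref{bound2eq}) again, now used to bound $|\eta NQ|$ from above in absolute value), $\lambda NR$ (via (\ref{phipsi})), $-\kappa\|\vec\sigma\|^2$, and the $h$-term are all $\mathcal{O}(N(1+h^2+\|\bar G\|_{op}^2+\|\bar z\|^2))$ on $B$, so $X_{N,\lambda,\eta}(\vec\sigma)\ge -C'N(1+h^2+\|\bar z\|^2+\|\bar G\|_{op}^2)$ there. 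Since Stirling gives $\log\mathrm{vol}(B)\ge -C''N$, I conclude $Z_{N,\lambda,\eta}\ge\exp\bigl(-K N(1+h^2+\|\bar z\|^2+\|\bar G\|_{op}^2)\bigr)$.

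Taking the ratio of these estimates produces the stated bound. The only real difficulty is bookkeeping of constants to select a single $\varepsilon>0$ for which the total coefficient of $\|\vec\sigma\|^2$ in the numerator's exponent remains sufficiently negative after $t$, the $|\lambda|d$ coming from $R$, and the $\kappa/4$ produced by Young on the $h$-term are combined; the use of (G-3) in both directions is what keeps the $\eta NQ(\vec\sigma)$ term from disturbing this balance.
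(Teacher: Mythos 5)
Your proof is correct and follows essentially the same strategy as the paper's: bound the numerator above by using (G-3)/\eqref{bound2eq} to absorb $\sum_\ell X_N+\eta NQ$, use \eqref{phipsi} to absorb $\lambda NR$ into the quadratic term, and then bound the partition function below. The only (harmless) variation is in the lower bound on $Z_{N,\lambda,\eta}$, where you restrict to the ball $\{\|\vec\sigma\|^2\le N\}$ and invoke Stirling, whereas the paper bounds the Hamiltonian below by a quadratic on all of $\mathbb{R}^{Nn}$ and evaluates the resulting gaussian integral exactly.
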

\noindent
    \emph{Proof.}
    From \eqref{phipsi}, 
    \begin{align}\label{eq6}
        |R(\vec{\sigma})|&\leq \frac{d}{N}\sum_{i\le N}\Bigl(1+\sum_{\ell \le n}(\sigma_i^\ell)^2\Bigr)= \frac{d}{N}(N+\|\vec\sigma\|^2)
    \end{align}
    and
    \begin{align}\label{eq8}
        |Q(\vec\sigma)|&\leq \frac{d}{N}\Bigl(M+\sum_{\ell \le n}\sum_{k\le M}(S_k(\sigma^\ell)+z_k)^2\Bigr)
        \leq  \frac{d}{N}\Bigl(M+2\|\bar G\|_{op}^2\|\vec\sigma\|^2+2n\|z\|^2\Bigr).
    \end{align}
    By \eqref{eq6}, if $t+d|\lambda|\leq \kappa/2$, then  
    \begin{align*}
        &t\|\vec\sigma\|^2-\kappa \|\vec\sigma\|^2+\lambda NR(\vec\sigma)\leq
        |\lambda| d N -\frac{\kappa}{2}\|\vec\sigma\|^2,
    \end{align*}
    which, together with \eqref{bound2eq} for $|\eta|\leq \eta_0$ implies that, 
    \begin{align}
        \int \exp\big({t\|\vec\sigma\|^2+X_{N,\lambda,\eta}(\vec\sigma)}\big)\,d\vec\sigma&
        \leq  \exp N\Bigl(d+d|\lambda|+\frac{nh^2}{2\kappa}+\frac{n}{2}\ln \frac{2\pi}{\kappa}\Bigr),
        \label{eq7}
    \end{align}
    where we used that for any $a>0$ and $b\in \mathbb{R}$,
    \begin{align}\label{formula}
        \int \exp({-ax^2+bx})\,dx&=\Bigl(\frac{\pi}{a}\Bigr)^{1/2}\exp\frac{b^2}{4a}.
    \end{align}
    Next, since by \eqref{G1gen}
    \begin{align*}
        u(S_k(\sigma)+z_k)&\geq -d(1+2S_k(\sigma)^2+2z_k^2),
    \end{align*}
    we have
    \begin{align*}
        \sum_{k\le M}u(S_k(\sigma)+z_k)&\geq -d\bigl(M+2\|\bar G\|_{op}^2\|\sigma\|^2+2\|z\|^2\bigr).
    \end{align*}
    Combining this with \eqref{eq6} and \eqref{eq8}, we obtain that
    \begin{align*}
        X_{N,\lambda,\eta}(\vec\sigma)&\geq -\tilde{T} \|\vec\sigma\|^2-N\tilde{T}'+h\sum_{\ell \le n}\sum_{i\le N}\sigma_i^\ell,
    \end{align*}
    where 
    \begin{align*}
        \tilde{T}&:=\kappa+{2d(1+|\eta|)}\|\bar G\|_{op}^2+d|\lambda| > 0 \quad \mbox{and}\quad \tilde{T}':=d\bigl(\alpha n +|\lambda|+\alpha |\eta|\bigr)+2nd(1+|\eta|)\|\bar z\|^2 >0.
    \end{align*}
    Consequently, from \eqref{formula},
    \begin{align*}
        Z_{N,\lambda,\eta} \geq \exp nN\Bigl(-\frac{\tilde T'}{n}+\frac{h^2}{4\tilde T}+\frac{1}{2}\ln \frac{\pi}{\tilde T}\Bigr) \geq \exp nN\Bigl(-\frac{\tilde T'}{n}-\frac{1}{2}\ln {\tilde T}\Bigr),
    \end{align*}
    which together with the fact that $\ln x\leq x+1$ for all $x>0$ implies that
    \begin{align*}
        \frac{1}{Z_{N,\lambda,\eta}} \leq \exp nN\Bigl(\frac{\tilde T'}{n}+\frac{1}{2}({\tilde T}+1)\Bigr)\leq \exp K'N\bigl(\|\bar z\|^2+\|\bar G\|_{op}^2+1\bigr).
    \end{align*}
    Our assertion follows by putting this and \eqref{eq7} together. 
    \begin{flushright}$\square$\end{flushright}

{We let $I(A)$ be the indicator function for event $A$.}
\begin{lemma}\label{lem1.5}
  {There exist constants $\varepsilon>0$ and $K,K'>0$ such that, for any $|\lambda|,|\eta|\leq \eps$},
  \begin{align*}
      \Bigl\la I\Bigl(\|\vec\sigma\|\geq K'\sqrt{N(\|\bar z\|^2+\|\bar G\|_{op}^2+1+h^2)}\Bigr)\Bigr\ra_{\lambda,\eta}&\leq \exp({-KN}).
  \end{align*}
\end{lemma}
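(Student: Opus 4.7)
The plan is to deduce this tail estimate directly from the exponential-moment bound of Lemma~\ref{lem1} via a Chernoff/Markov argument, using the fact that $\|\bar z\|^2 + \|\bar G\|_{op}^2 + 1 + h^2 \geq 1$ to convert the $M_N$-dependent decay into a uniform $e^{-KN}$ bound.

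Write $M_N := \|\bar z\|^2 + \|\bar G\|_{op}^2 + 1 + h^2$, so $M_N \geq 1$. Let $\varepsilon_0 > 0$ and $K_0 \geq 1$ be the constants supplied by Lemma~\ref{lem1}, so that for every $t, |\lambda|, |\eta| \leq \varepsilon_0$,
\begin{align*}
\bigl\langle \exp(t \|\vec\sigma\|^2) \bigr\rangle_{\lambda,\eta} \leq \exp(K_0 N M_N).
\end{align*}
For a threshold $A>0$, the elementary inequality $I(\|\vec\sigma\|^2 \geq A) \leq \exp(t\|\vec\sigma\|^2 - tA)$ followed by integration against the Gibbs measure yields
\begin{align*}
\bigl\langle I(\|\vec\sigma\|^2 \geq A) \bigr\rangle_{\lambda,\eta} \leq \exp(K_0 N M_N - tA).
\end{align*}

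Now fix $t = \varepsilon_0$ and choose the threshold to be $A = (K')^2 N M_N$ with $K'$ chosen so that $\varepsilon_0 (K')^2 \geq 2K_0$; for instance $K' := \sqrt{2K_0/\varepsilon_0}$. Then the exponent becomes
\begin{align*}
K_0 N M_N - \varepsilon_0 (K')^2 N M_N \leq -K_0 N M_N \leq -K_0 N,
\end{align*}
where the last inequality uses $M_N \geq 1$. Taking $\varepsilon := \varepsilon_0$ and $K := K_0$ completes the argument, since the event $\{\|\vec\sigma\|^2 \geq (K')^2 N M_N\}$ is exactly $\{\|\vec\sigma\| \geq K'\sqrt{NM_N}\}$.

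There is no serious obstacle: the only thing to watch is that $K'$ must be chosen after $\varepsilon_0, K_0$ are fixed (so that $\varepsilon_0 (K')^2$ dominates $K_0$), and that the bound $M_N \geq 1$ is used to absorb the random prefactor in the exponent into the uniform $e^{-KN}$ rate. Both points are immediate given Lemma~\ref{lem1}.
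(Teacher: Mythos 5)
Your proposal is correct and follows essentially the same route as the paper: a Chernoff bound $I(\|\vec\sigma\|\geq a)\leq \exp t(\|\vec\sigma\|^2-a^2)$ combined with the exponential-moment estimate of Lemma~\ref{lem1}, with the threshold chosen so that $ta^2$ dominates twice the random prefactor $K_0 N M_N$ and the bound $M_N\geq 1$ absorbs the remainder into a uniform rate $e^{-KN}$. No issues.
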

\noindent
    \emph{Proof.} Recall the constants $\eps$ and $K$ from the statement of Lemma \ref{lem1}. {If $t\le \varepsilon$,}
  then for any $a>0$, 
  $$
      \bigl\la I(\|\vec\sigma\|\geq a)\bigr\ra_{\lambda,\eta} \leq \bigl\la \exp t(\|\vec\sigma\|^2-a^2)\bigr\ra_{\lambda,\eta}\leq \exp(-{a^2t}+{KN(\|\bar z\|^2+\|\bar G\|_{op}^2+1+h^2)}).
  $$
  Our proof is completed by taking $a=\sqrt{{2t^{-1}KN(\|\bar z\|^2+\|\bar G\|_{op}^2+1+h^2)}}.$ \begin{flushright}$\square$\end{flushright}

\begin{lemma}\label{lem2}
    Let $k\geq 1.$ There exist $0<\varepsilon<1$ and $K\geq 1$ such that, for any $|\lambda|,|\eta|\leq \eps$,
    \begin{align}
        \label{lem2:eq1}
      \bigl\la \|\vec\sigma\|^{2k}\bigr\ra_{\lambda,\eta}&\leq K N^k\bigl(\|\bar z\|^{2k}+\|\bar G\|_{op}^{2k}+1+h^{2k}\bigr).
    \end{align}
\end{lemma}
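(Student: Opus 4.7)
The only input available is Lemma~\ref{lem1}: for $t,|\lambda|,|\eta|$ small enough, one has the pointwise bound $\la \exp(t\|\vec\sigma\|^2)\ra_{\lambda,\eta} \leq \exp(KN\Xi)$, where I abbreviate $\Xi := \|\bar z\|^2 + \|\bar G\|_{op}^2 + 1 + h^2$. The naive deterministic inequality $x^k \leq (k/(et))^k e^{tx}$ would only yield a bound with factor $\exp(KN\Xi)$ on the right, which is far too large since the target is polynomial, of order $N^k\Xi^k$. The plan is therefore to combine Lemma~\ref{lem1} with a Chebyshev-type tail estimate plus a layer-cake integration.

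Fixing $t$ at, say, half the threshold from Lemma~\ref{lem1}, Markov's inequality yields, for every $a > 0$,
\[
  \la I(\|\vec\sigma\| \geq a)\ra_{\lambda,\eta} \leq \exp(-ta^2 + KN\Xi).
\]
Beyond the critical radius $a_0 := \sqrt{2KN\Xi/t}$, the exponent is $\leq -ta^2/2$, i.e.\ genuinely gaussian. I would then use the layer-cake representation
\[
  \la \|\vec\sigma\|^{2k}\ra_{\lambda,\eta} = \int_0^\infty 2k\, r^{2k-1}\, \la I(\|\vec\sigma\| \geq r)\ra_{\lambda,\eta}\, dr,
\]
split the integral at $r = a_0$, bound the inner piece trivially by $a_0^{2k} = (2K/t)^k (N\Xi)^k$, and bound the outer piece by $\int_0^\infty 2k\, r^{2k-1} e^{-tr^2/2}\, dr = k!\,(2/t)^k$. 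Because $\Xi \geq 1$ by its definition, this constant tail contribution is absorbed into the announced $N^k \Xi^k$ bound after enlarging $K$.

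The main subtlety is the choice of the threshold $a_0$: it must be large enough that the Chebyshev exponent $-ta^2 + KN\Xi$ becomes strictly negative (forcing $a_0$ of order $\sqrt{N\Xi}$), yet it should not overshoot, because the inner-piece estimate $a_0^{2k}$ is precisely what sets the order of the final bound. The value $a_0 = \sqrt{2KN\Xi/t}$ balances both requirements. Uniformity in $\lambda,\eta$ is automatic since the constants $K,\varepsilon,t$ from Lemma~\ref{lem1} are already uniform on $|\lambda|,|\eta|\leq \varepsilon$, and no step introduces a hidden $N$-dependence.
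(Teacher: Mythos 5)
Your argument is correct. Both your proof and the paper's rest entirely on the exponential moment bound of Lemma~\ref{lem1}; the only difference is how the polynomial moment is extracted from it. The paper simply cites Lemma~3.1.8 of Talagrand, namely $\e X^{k}\leq 2^{k}\bigl(k^{k}+(\ln \e \exp X)^{k}\bigr)$ for $X\geq 0$, applies it to $X=t\|\vec\sigma\|^{2}$, and is done in two lines. You instead re-derive the content of that inequality by hand: a Chernoff-type tail bound $\la I(\|\vec\sigma\|\geq a)\ra_{\lambda,\eta}\leq \exp(-ta^{2}+KN\Xi)$ (which is exactly the mechanism of the paper's Lemma~\ref{lem1.5}), followed by a layer-cake integration split at $a_{0}=\sqrt{2KN\Xi/t}$. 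The threshold is chosen correctly, the gaussian tail integral $k!(2/t)^{k}$ is computed correctly, and the constant contribution is legitimately absorbed since $\Xi\geq 1$. The trade-off is that your version is self-contained where the paper's is a citation, at the cost of a slightly longer computation. The one step you leave implicit is passing from $N^{k}\Xi^{k}$ to the stated form $N^{k}(\|\bar z\|^{2k}+\|\bar G\|_{op}^{2k}+1+h^{2k})$, which requires the elementary power-mean bound $(a+b+c+d)^{k}\leq 4^{k-1}(a^{k}+b^{k}+c^{k}+d^{k})$; the paper performs the same step equally silently, so this is cosmetic rather than a gap.
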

\noindent
    \emph{Proof.} From Lemma \ref{lem1}, 
  \begin{align*}
      \ln \bigl\la \exp{ t\|\vec\sigma\|^2}\bigr\ra_{\lambda,\eta}\leq {KN(\|\bar z\|^2+\|\bar G\|_{op}^2+1+h^2)}.
  \end{align*}
    For any non-negative random variable $X$, by Lemma 3.1.8 in \cite{Talagrand2011spina},
      $$\e X^k\leq 2^k\bigl(k^k+\bigl(\ln \e \exp X\bigr)^{k}\bigr).$$
  Using this inequality with $X=t\|\vec\sigma\|^{2}$, we have
  \begin{align*}
      t^k\bigl\la \|\vec\sigma\|^{2k}\bigr\ra_{\lambda,\eta}&\leq 2^k\bigl(k^k+K^kN^k(\|\bar z\|^2+\|\bar G\|_{op}^2+1+h^2)^k\bigr)\\
      &\leq 2^k\bigl(k^k+4^kK^kN^k(\|\bar z\|^{2k}+\|\bar G\|_{op}^{2k}+1+h^{2k})\bigr)\\
      &\leq C_kN^k\bigl(\|\bar z\|^{2k}+\|\bar G\|_{op}^{2k}+1+h^{2k}\bigr).
  \end{align*}
    This finishes the proof. \begin{flushright}$\square$\end{flushright}


\subsection{Convexity and pseudo-Lipschitz property of the overlaps}
\begin{lemma}\label{lem:convex}
    If $|\lambda|+\|\bar G\|_{op}^2 |\eta|\leq \kappa/(d n)$ then the following function is convex:
    \begin{align}\label{lem:convex:eq1}
      \vec\sigma\mapsto\frac{\kappa}{2}\|\vec\sigma\|^2-\lambda NR(\vec\sigma)-\eta NQ(\vec\sigma).
    \end{align}
\end{lemma}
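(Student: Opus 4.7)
The plan is to reduce convexity to positive-semidefiniteness of the Hessian, and then control the quadratic forms associated with $NR(\vec\sigma)$ and $NQ(\vec\sigma)$ using assumption (\ref{bound}) together with the operator norm of $\bar G$. Since the function in (\ref{lem:convex:eq1}) is $C^2$ in $\vec\sigma\in\R^{Nn}$, it suffices to show that for every direction $\vec v=(v^1,\ldots,v^n)\in\R^{Nn}$, the Hessian applied to $\vec v$ satisfies
\[
\vec v^\intercal \,\nabla^2\Bigl(\tfrac{\kappa}{2}\|\vec\sigma\|^2-\lambda NR(\vec\sigma)-\eta NQ(\vec\sigma)\Bigr)\,\vec v\ \geq\ 0,
\]
i.e.\ that $\lambda\,\vec v^\intercal H_R \vec v+\eta\, \vec v^\intercal H_Q\vec v\leq \kappa\|\vec v\|^2$, where $H_R$, $H_Q$ denote the Hessians of $NR$ and $NQ$ with respect to $\vec\sigma$.

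For $NR(\vec\sigma)=\sum_{i\le N}\phi(\sigma_i^1,\ldots,\sigma_i^n)$ the Hessian is block-diagonal in the index $i$, and writing $\vec v_i=(v_i^1,\ldots,v_i^n)\in\R^n$,
\[
\vec v^\intercal H_R\vec v=\sum_{i\le N}\vec v_i^\intercal A_i\vec v_i,\qquad (A_i)_{\ell\ell'}=\partial_{s_\ell s_{\ell'}}\phi(\sigma_i^1,\ldots,\sigma_i^n).
\]
By (\ref{bound}), each entry of $A_i$ is bounded by $d$, so $\|A_i\|_{op}\leq dn$ and hence $|\vec v^\intercal H_R\vec v|\leq dn\sum_i\|\vec v_i\|^2=dn\|\vec v\|^2$. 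For $NQ(\vec\sigma)=\sum_{k\le M}\psi(S_k(\sigma^1)+z_k,\ldots,S_k(\sigma^n)+z_k)$, the chain rule gives
\[
\vec v^\intercal H_Q\vec v=\sum_{k\le M}\vec w_k^\intercal B_k\vec w_k, \qquad w_k^\ell:=(\bar G v^\ell)_k,\quad (B_k)_{\ell\ell'}=\partial_{s_\ell s_{\ell'}}\psi(\cdots).
\]
The same bound $\|B_k\|_{op}\le dn$ yields
\[
|\vec v^\intercal H_Q\vec v|\ \leq\ dn\sum_{k\le M}\|\vec w_k\|^2\ =\ dn\sum_{\ell\le n}\|\bar G v^\ell\|^2\ \leq\ dn\,\|\bar G\|_{op}^{2}\,\|\vec v\|^2.
\]

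Combining these two estimates,
\[
\bigl|\lambda\, \vec v^\intercal H_R\vec v+\eta\, \vec v^\intercal H_Q\vec v\bigr|\ \leq\ dn\bigl(|\lambda|+|\eta|\,\|\bar G\|_{op}^2\bigr)\|\vec v\|^2\ \leq\ \kappa\|\vec v\|^2
\]
under the assumption $|\lambda|+\|\bar G\|_{op}^2|\eta|\leq \kappa/(dn)$, which gives a nonnegative Hessian and hence convexity. There is no real obstacle here: the only delicate point is making sure the upper bound $\|A\|_{op}\le dn$ is the correct factor from assumption (\ref{bound}), and carrying the factor $\|\bar G\|_{op}^2$ out of the $Q$-term through the identity $w_k^\ell=(\bar G v^\ell)_k$. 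Everything else is a direct computation.
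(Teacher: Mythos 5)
Your proof is correct and follows essentially the same route as the paper: compute the Hessian quadratic forms of $NR$ and $NQ$ blockwise in $i$ (resp.\ $k$), use \eqref{bound} to bound each $n\times n$ block by $dn$ in operator norm, pull out $\|\bar G\|_{op}^2$ via $w_k^\ell=(\bar G v^\ell)_k$, and conclude that the perturbation is dominated by the $\kappa$-term. The paper phrases the block bound as $d\sum_i(\sum_\ell|y_i^\ell|)^2\le dn\|\vec y\|^2$ rather than via $\|A_i\|_{op}\le dn$, but this is the same estimate.
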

\noindent
    \emph{Proof.} Recall the generalized overlap definitions \eqref{gene_overlap_R} and \eqref{gene_overlap_Q}. Note that for any $\vec y=(y^1,\ldots,y^n)\in \mathbb{R}^N\times\cdots \times\mathbb{R}^N$,
  \begin{align*}
      &\sum_{\ell,\ell'\le n}\sum_{i,i'\le N}\partial_{\sigma_i^\ell \sigma_{i'}^{\ell'}}\Bigl(\sum_{j\le N}\phi(\sigma_j^1,\ldots,\sigma_j^n)\Bigr)y_i^\ell y_{i'}^{\ell'}= \sum_{\ell,\ell'\le n}\Bigl(\sum_{i\le N}\partial_{s_\ell s_{\ell'}}\phi(\sigma_i^1,\ldots,\sigma_i^n) y_i^\ell y_i^{\ell'}\Bigr).
  \end{align*}
  Since, by \eqref{bound}, $|\partial_{s_\ell s_{\ell'}}\phi(s)|\leq d$, this is bounded in absolute value by
  \begin{align*}
        d \sum_{i\le N} \Bigl(\sum_{\ell \le n} |y_i^\ell|\Bigr)^2
        \leq d n \|\vec y\|^2.
  \end{align*}  
    Similarly,
    \begin{align*}
        &\sum_{\ell,\ell'\le n}\sum_{i,i'\le N}\partial_{\sigma_i^\ell\sigma_{i'}^{\ell'}}\Bigl(\sum_{k\le M}\psi\bigl(S_k(\sigma^1)+z_k,\ldots,S_k(\sigma^n)+z_k\bigr)\Bigr)y_i^\ell y_{i'}^{\ell'}\\
        &\qquad=\sum_{\ell,\ell'\le n}\sum_{k\le M}\partial_{s_\ell s_{\ell'}}\psi\bigl(S_k(\sigma^1)+z_k,\ldots,S_k(\sigma^n)+z_k\bigr)\Bigl(\sum_{i\le N} \bar g_{ki}y_i^\ell\Bigr)\Bigl(\sum_{i'\le N} \bar g_{ki'}y_{i'}^{\ell'}\Bigr),
    \end{align*}
    and since, by \eqref{bound}, $|\partial_{s_\ell s_{\ell'}}\psi(s)|\leq d$, this is bounded in absolute value by
  \begin{align*}
        d \sum_{\ell,\ell'=1}^n |(\bar G y^\ell, \bar G y^{\ell'})|
        \leq d\|\bar G\|_{op}^2 \Bigl(\sum_{\ell \le n} \|y^\ell\|\Bigr)^2
        \leq d {n}\|\bar G\|_{op}^2\|\vec y\|^2.
  \end{align*}
  These complete our proof. \begin{flushright}$\square$\end{flushright}

\begin{lemma}\label{lem:lip}
  We have that
  \begin{align*}
    \bigl|R(\vec\sigma)-R(\vec\sigma')\bigr|&\leq \frac{d}{N}\Bigl(\sqrt{N}+\|\vec\sigma\|+\|\vec\sigma'\|\Bigr)\|\vec\sigma-\vec\sigma'\|
  \end{align*}
  and
  \begin{align*}
      &\bigl|Q(\vec\sigma)-Q(\vec\sigma')\bigr|\leq \frac{d}{N}\Bigl(\sqrt{M}+2\sqrt{n}\|z\|+\|\bar G\|_{op}\|\vec\sigma\|+\|\bar G\|_{op}\|\vec\sigma'\|\Bigr) \|\bar G\|_{op} \|\vec\sigma-\vec\sigma'\|.
  \end{align*}
\end{lemma}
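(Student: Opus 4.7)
Both inequalities follow by applying the pseudo-Lipschitz property \eqref{lip} of $\phi$ (respectively $\psi$) coordinate by coordinate, then reassembling the resulting sums via Cauchy--Schwarz (and Minkowski for $Q$). No feature of the model beyond \eqref{lip} is needed, so the proof is purely bookkeeping.

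For the bound on $R$, I would introduce the grouped coordinates $s_i:=(\sigma_i^1,\ldots,\sigma_i^n)\in\Reals^n$ and $s_i'$ analogously, so that $R(\vec\sigma)-R(\vec\sigma')=\frac1N\sum_{i\le N}(\phi(s_i)-\phi(s_i'))$. Invoking \eqref{lip} termwise gives
\[
|R(\vec\sigma)-R(\vec\sigma')|\le \frac{d}{N}\sum_{i\le N}\bigl(1+\|s_i\|+\|s_i'\|\bigr)\|s_i-s_i'\|.
\]
The three resulting pieces are handled by Cauchy--Schwarz in $i$ using the identities $\sum_i\|s_i\|^2=\|\vec\sigma\|^2$, $\sum_i\|s_i-s_i'\|^2=\|\vec\sigma-\vec\sigma'\|^2$, and $\sum_i 1=N$; they turn into $\sqrt N\|\vec\sigma-\vec\sigma'\|$, $\|\vec\sigma\|\|\vec\sigma-\vec\sigma'\|$, and $\|\vec\sigma'\|\|\vec\sigma-\vec\sigma'\|$ respectively, yielding exactly the stated bound.

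For the bound on $Q$, the same strategy applies with the shifted responses $t_k:=(S_k(\sigma^1)+z_k,\ldots,S_k(\sigma^n)+z_k)$ playing the role of $s_i$; \eqref{lip} gives
\[
|Q(\vec\sigma)-Q(\vec\sigma')|\le \frac{d}{N}\sum_{k\le M}\bigl(1+\|t_k\|+\|t_k'\|\bigr)\|t_k-t_k'\|.
\]
Two simple estimates then drive the rest. Since the $z_k$'s cancel in the difference, $\sum_k\|t_k-t_k'\|^2=\sum_\ell\|\bar G(\sigma^\ell-(\sigma')^\ell)\|^2\le \|\bar G\|_{op}^2\|\vec\sigma-\vec\sigma'\|^2$. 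Moreover, Minkowski's inequality on the index set $(k,\ell)$ applied to the decomposition $t_k^\ell=S_k(\sigma^\ell)+z_k$ yields
\[
\sqrt{\sum_k\|t_k\|^2}\le\sqrt{\sum_\ell\|\bar G\sigma^\ell\|^2}+\sqrt{n\|z\|^2}\le \|\bar G\|_{op}\|\vec\sigma\|+\sqrt n\,\|z\|,
\]
and analogously for $\vec\sigma'$. A final Cauchy--Schwarz in $k$ applied to each of the three summands above delivers $\sqrt M\,\|\bar G\|_{op}\|\vec\sigma-\vec\sigma'\|$ from the constant term, together with $(\|\bar G\|_{op}\|\vec\sigma\|+\sqrt n\,\|z\|)\|\bar G\|_{op}\|\vec\sigma-\vec\sigma'\|$ and its primed analogue from the other two; their sum, multiplied by $d/N$, is exactly the claimed inequality.

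No step is a genuine obstacle; the only point requiring some care is using Minkowski rather than the cruder $(a+b)^2\le 2(a^2+b^2)$, so as to recover the sharp constants $\sqrt M$ and $2\sqrt n$ stated in the lemma rather than $\sqrt{2M}$ or $2\sqrt{2n}$.
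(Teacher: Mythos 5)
Your proof is correct and follows essentially the same route as the paper's: apply the pseudo-Lipschitz bound \eqref{lip} coordinatewise, then Cauchy--Schwarz and Minkowski to reassemble the sums, with the observation that the $z_k$'s cancel in $t_k-t_k'$. The only (immaterial) difference is that you distribute the three terms before applying Cauchy--Schwarz, whereas the paper applies one Cauchy--Schwarz and then Minkowski on the first factor.
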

\noindent
    \emph{Proof. } For simplicity, for $\vec\sigma=(\sigma^1,\ldots,\sigma^n)$, we denote $\sigma(i)=(\sigma_i^1,\ldots,\sigma_i^n).$ From \eqref{lip}, 
  \begin{align*}
  \bigl|R(\vec\sigma)-R(\vec\sigma')\bigr|&\leq \frac{d}{N}\sum_{i\le N}\bigl(1+\|\sigma(i)\|+\|\sigma'(i)\|\bigr)\|\sigma(i)-\sigma'(i)\|\\
  &\leq \frac{d}{N}\Bigl(\sum_{i\le N}\bigl(1+\|\sigma(i)\|+\|\sigma'(i)\|\bigr)^2\Bigr)^{1/2}\Bigl(\sum_{i\le N}\|\sigma(i)-\sigma'(i)\|^2\Bigr)^{1/2}\\
  &\leq \frac{d}{N}\Bigl(\Bigl(\sum_{i\le N}1\Bigr)^{1/2}+\Bigl(\sum_{i\le N}\|\sigma(i)\|^2\Bigr)^{1/2}+\Bigl(\sum_{i\le N}\|\sigma'(i)\|^2\Bigr)^{1/2}\Bigr)\|\vec{\sigma}-\vec\sigma'\|\\
  &\leq \frac{d}{N}\Bigl(\sqrt{N}+\|\vec\sigma\|+\|\vec\sigma'\|\Bigr)\|\vec\sigma-\vec\sigma'\|,
  \end{align*}
  where in the third line we used the Minkowski inequality.
    Again, by \eqref{lip}, 
    \begin{align*}
        &\bigl|Q(\vec\sigma)-Q(\vec\sigma')\bigr|\\
        &\quad\leq \frac{d}{N}\sum_{k\le M}\Bigl(1+\Bigl(\sum_{\ell \le n}|S_k(\sigma^\ell)+z_k|^2\Bigr)^{1/2}+\Bigl(\sum_{\ell \le n}|S_k({\sigma'}^\ell)+z_k|^2\Bigr)^{1/2}\Bigr)\\
        &\qquad\qquad\qquad\qquad\times\Bigl(\sum_{\ell \le n}S_k(\sigma^\ell-{\sigma'}^{\ell})^2\Bigr)^{1/2}\\
        &\quad\leq \frac{d}{N}\Bigl(\sum_{k\le M}\Bigl(1+\Bigl(\sum_{\ell \le n}|S_k(\sigma^\ell)+z_k|^2\Bigr)^{1/2}+\Bigl(\sum_{\ell \le n}|S_k({\sigma'}^\ell)+z_k|^2\Bigr)^{1/2}\Bigr)^2\Bigr)^{1/2}\\
        &\qquad\qquad\qquad\qquad\times \Bigl(\sum_{k\le M}\sum_{\ell \le n}S_k(\sigma^\ell-{\sigma'}^{\ell})^2\Bigr)^{1/2}.
    \end{align*}
    Here, since 
    $$
        \Bigl(\sum_{k\le M}\sum_{\ell \le n}S_k(\sigma^\ell-{\sigma'}^{\ell})^2\Bigr)^{1/2}
        =
        \Bigl(\sum_{\ell \le n} \|\bar G(\sigma^\ell-{\sigma'}^{\ell})\|^2\Bigr)^{1/2}
        \leq \|\bar G\|_{op}\|\vec\sigma-\vec\sigma'\|,
    $$
    and, similarly,
    \begin{align*}
        \Bigl(\sum_{k\le M}\sum_{\ell \le n}|S_k(\sigma^\ell)+z_k|^2\Bigr)^{1/2}&\leq \Bigl(\sum_{k\le M}\sum_{\ell \le n}S_k(\sigma^\ell)^2\Bigr)^{1/2}+\Bigl(\sum_{k\le M}\sum_{\ell \le n}z_k^2\Bigr)^{1/2} \leq \|\bar G\|_{op}\|\vec\sigma\|+\sqrt{n}\|z\|,
    \end{align*} 
    we can again use the Minkowski inequality to obtain
    \begin{align*}
        &\bigl|Q(\vec\sigma)-Q(\vec\sigma')\bigr|\leq \frac{d}{N}\Bigl(\sqrt{M}+2\sqrt{n}\|z\|+\|\bar G\|_{op}\|\vec\sigma\|+\|\bar G\|_{op}\|\vec\sigma'\|\Bigr) \|\bar G\|_{op}\|\vec\sigma-\vec\sigma'\|.
    \end{align*}
    \begin{flushright}$\square$\end{flushright}

\begin{lemma}\label{lem3}
  For any $k\geq 1$ there exists some $C_k>0$ such that 
  \begin{align*}
        \e\Bigl\la |R(\vec\sigma)|^k\Bigr\ra_{0,0}\leq C_k(1+h^{2k}) \quad \mbox{ and }\quad    
        \e\Bigl\la |Q(\vec\sigma)|^k\Bigr\ra_{0,0} \leq C_k(1+ h^{2k}).
  \end{align*}
\end{lemma}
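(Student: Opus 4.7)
The plan is to combine the deterministic pointwise bounds \eqref{eq6}--\eqref{eq8} on $|R(\vec\sigma)|$ and $|Q(\vec\sigma)|$ with the moment control of $\|\vec\sigma\|^{2k}$ under the Gibbs measure provided by Lemma \ref{lem2} (applied at $\lambda=\eta=0$), and then use standard moment bounds on $\|\bar z\|$ and $\|\bar G\|_{op}$ to integrate out the randomness of $(G,z)$.

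\textbf{Step 1: Pointwise bounds.} From \eqref{eq6}, $|R(\vec\sigma)|\leq d(1+\|\vec\sigma\|^2/N)$, so $(a+b)^k\leq 2^{k-1}(a^k+b^k)$ gives
\[
  |R(\vec\sigma)|^k\leq 2^{k-1}d^k\Bigl(1+\frac{\|\vec\sigma\|^{2k}}{N^k}\Bigr).
\]
Similarly \eqref{eq8} yields, after expanding with $(a+b+c)^k\leq 3^{k-1}(a^k+b^k+c^k)$,
\[
  |Q(\vec\sigma)|^k\leq 3^{k-1}d^k\Bigl(\alpha^k+\frac{2^kn^k\|z\|^{2k}}{N^k}+\frac{2^k\|\bar G\|_{op}^{2k}\|\vec\sigma\|^{2k}}{N^k}\Bigr).
\]

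\textbf{Step 2: Gibbs average.} Taking $\la\cdot\ra_{0,0}$ and applying Lemma \ref{lem2} with $\lambda=\eta=0$ gives, for some constant $K=K(k)>0$,
\[
  \la \|\vec\sigma\|^{2k}\ra_{0,0}\leq KN^k\bigl(\|\bar z\|^{2k}+\|\bar G\|_{op}^{2k}+1+h^{2k}\bigr).
\]
Plugging this into the Gibbs averages of the bounds in Step 1 eliminates all $\sigma$-dependence: $\la|R|^k\ra_{0,0}$ is controlled by $1+\|\bar z\|^{2k}+\|\bar G\|_{op}^{2k}+h^{2k}$, while $\la|Q|^k\ra_{0,0}$ is controlled by $1+\|\bar z\|^{2k}+\|\bar G\|_{op}^{2k}+\|\bar G\|_{op}^{2k}(\|\bar z\|^{2k}+\|\bar G\|_{op}^{2k}+1+h^{2k})$.

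\textbf{Step 3: Expectations over $(G,z)$.} It remains to bound the resulting expectations uniformly in $N$. For $\bar z=z/\sqrt N$ with $z_k\stackrel{\text{i.i.d.}}{\sim}\mathcal{N}(0,\Delta_*)$, $\|\bar z\|^2=N^{-1}\sum_{k\leq M}z_k^2$ is an average of at most $\alpha N+1$ i.i.d.\ chi-squared variables, and classical moment bounds give $\sup_N\e\|\bar z\|^{2k}<\infty$. For $\bar G=G/\sqrt N$ with i.i.d.\ standard gaussian entries, it is a standard random matrix fact (e.g.\ gaussian concentration of $\|\bar G\|_{op}$ around $1+\sqrt\alpha$) that $\sup_N\e\|\bar G\|_{op}^{p}<\infty$ for every $p\geq 1$. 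The cross terms of the form $\e[\|\bar G\|_{op}^{2k}\|\bar z\|^{2k}]$ or $\e\|\bar G\|_{op}^{4k}$ arising in the $Q$ estimate are then handled by the Cauchy--Schwarz inequality together with the fact that $\|\bar G\|_{op}$ and $\|\bar z\|$ are independent and each has all moments bounded uniformly in $N$.

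\textbf{Main obstacle.} Everything else is a bookkeeping exercise; the only non-elementary ingredient is the uniform control of $\e\|\bar G\|_{op}^{p}$, which I would invoke as a black box from gaussian random matrix theory rather than re-derive. The other minor subtlety is that the $Q$ bound carries an extra factor $\|\bar G\|_{op}^{2k}$ coming from \eqref{eq8}, which when combined with the $\|\bar G\|_{op}^{2k}$ appearing in the moment bound of Lemma \ref{lem2} produces up to $\|\bar G\|_{op}^{4k}$; Cauchy--Schwarz (or H\"older) applied to $\e[\|\bar G\|_{op}^{2k}\|\bar z\|^{2k}]$ and the like absorbs this without trouble. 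The $(1+h^{2k})$ dependence appears explicitly from Lemma \ref{lem2} and is preserved through the computation, giving the stated bound.
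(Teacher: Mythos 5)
Your proposal is correct and follows essentially the same route as the paper: a deterministic pointwise bound on $|R|$ and $|Q|$ in terms of $\|\vec\sigma\|^2$ (the paper derives it from Lemma \ref{lem:lip}, you from \eqref{eq6}--\eqref{eq8}, but both come from the same growth assumption \eqref{phipsi}), followed by Lemma \ref{lem2} at $\lambda=\eta=0$ and uniform moment bounds on $\|\bar G\|_{op}$ (via \eqref{Gnormconc}) and $\|\bar z\|$. The handling of the cross term by independence/Cauchy--Schwarz is fine and matches the paper's (implicit) treatment.
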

\noindent
    \emph{Proof.} Let us denote by $\vec s:= \vec\sigma/\sqrt{N}.$ Note that from Lemma \ref{lem:lip},
  \begin{align*}
        \bigl|R(\vec\sigma)\bigr|&\leq {|R(0)|} + \frac{d}{N}\bigl(\sqrt{N}+\|\vec\sigma\|\bigr)\|\vec\sigma\|
        \leq d +
        d\bigl(1+\|\vec s\|\bigr)\|\vec s\|,\\
        \bigl|Q(\vec\sigma)\bigr|&\leq {|Q(0)|} + \frac{d}{N}\bigl(\sqrt{M}+2\sqrt{n}\|z\|+\|\bar G\|_{op}\|\vec\sigma\|\bigr) \|\bar G\|_{op} \|\vec\sigma\|\\ &\leq {d( \alpha +n \|\bar z\|^2)} + d\bigl(\sqrt{\alpha}+2\sqrt{n}\|\bar z\|+\|\bar G\|_{op}\|\vec s\|\bigr) \|\bar G\|_{op} \|\vec s\|,
    \end{align*}
    {where we used the assumption \eqref{phipsi}}. Using Lemma \ref{lem2} with $\lambda=\eta=0$, we see that there exists some $C$ such that
    $$
      \big\la \|\vec s\|^{2k}\big\ra_{0,0}\leq C \bigl(\|\bar z\|^{2k}+\|\bar G\|_{op}^{2k}+1+h^{2k}\bigr).
    $$
    By Corollary 5.35 in \cite{vershynin2010introduction},
    \begin{equation}
        \p(\|\bar G\|_{op}\geq 1+\sqrt{\alpha}+t)\leq 2\exp({-Nt^2/2}),
        \label{Gnormconc}
    \end{equation}
    which implies that $\e\|\bar G\|_{op}^m\leq C_m < \infty$ for any $m\geq 1$ and some constant $C_m$ independent of $N$. Similarly, by gaussian concentration for gaussian vectors, $\e\|\bar z\|^{m}\leq C_m <\infty$. Together with the above inequalities, this finishes the proof. \begin{flushright}$\square$\end{flushright}


\subsection{Concentration of the free energy}

For clarity, here we will now denote $F_N(\lambda,\eta)$ by $F(G,z)$. Let us start with the following result.
\begin{lemma}\label{concentrationfreeenergy1}
    There exist constants $\eps>0$ and $K>0$ such that for all $|\lambda|,|\eta|\leq \eps$, 
    \begin{align*}
      &\bigl|F(G,z)-F(G',z')\bigr|\\
      &\quad\leq K\Bigl(\|\bar z\|^3+\|\bar G\|_{op}^3+\|\bar z'\|^3+\|\bar G'\|_{op}^3+1+\|\bar G\|_{op}h^2+\|\bar G'\|_{op}h^2+h^{2}\Bigr)\bigl\|(\bar G-\bar G',\bar z-\bar z')\bigr\|.
    \end{align*}
\end{lemma}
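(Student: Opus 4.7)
The plan is a linear interpolation combined with Cauchy--Schwarz and the moment bound of Lemma \ref{lem2}. I set $\bar G_t := (1-t)\bar G' + t\bar G$, $\bar z^t := (1-t)\bar z' + t\bar z$, $\delta\bar G := \bar G - \bar G'$, $\delta z := z - z'$, and $f(t) := F(\sqrt N\, \bar G_t, \sqrt N\, \bar z^t)$. Writing $f(1) - f(0) = \int_0^1 f'(t)\,dt$ and using $F_N = N^{-1}\ln Z_{N,\lambda,\eta}$, I get $f'(t) = N^{-1}\langle \tfrac{d}{dt} X_{N,\lambda,\eta}\rangle^t_{\lambda,\eta}$. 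The only $t$-dependent pieces of $X_{N,\lambda,\eta}$ are the $u$-sum and the $\eta NQ$ term, so the chain rule yields
\begin{align*}
\frac{d}{dt} X_{N,\lambda,\eta}(\vec\sigma) = \sum_{\ell\le n}\sum_{k\le M}\bigl[u'(S_k^t(\sigma^\ell)+z_k^t) + \eta\,\partial_{s_\ell}\psi(\vec s_k^t)\bigr]\bigl[(\delta\bar G\, \sigma^\ell)_k + \delta z_k\bigr],
\end{align*}
with $\vec s_k^t := (S_k^t(\sigma^1)+z_k^t,\dots,S_k^t(\sigma^n)+z_k^t)$.

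Next, I estimate this pointwise by Cauchy--Schwarz in $(\ell,k)$. The sub-linear growth $|u'(s)| \leq d(1+|s|)$ from (G-1), together with $|\partial_{s_\ell}\psi(s)| \leq d(1+2\|s\|)$ (which follows from the pseudo-Lipschitz inequality \eqref{lip} and the uniform Hessian bound in (G-2)), produces
\begin{align*}
\Bigl|\tfrac{d}{dt} X_{N,\lambda,\eta}\Bigr| \leq C\,N\,\bigl(\sqrt\alpha + \|\bar G_t\|_{op}\|\vec s\| + \|\bar z^t\|\bigr)\bigl(\|\delta\bar G\|_{op}\|\vec s\| + \|\delta\bar z\|\bigr),
\end{align*}
with $\vec s := \vec\sigma/\sqrt N$ and $C$ depending only on $(d,n,\eta_0)$. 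Taking the Gibbs expectation and applying Cauchy--Schwarz once more reduces matters to controlling $\langle \|\vec s\|^2\rangle^t_{\lambda,\eta}$, which by Lemma \ref{lem2} with $k=1$ is at most $K(1+\|\bar G_t\|_{op}^2 + \|\bar z^t\|^2 + h^2)$ provided $|\lambda|,|\eta|$ are sufficiently small. Therefore $|f'(t)| \leq C\,\|(\delta\bar G,\delta\bar z)\|\cdot P_t$, where $P_t$ is a polynomial of total degree at most three in $\|\bar G_t\|_{op}$, $\|\bar z^t\|$ and $h$.

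The last step is to expand $P_t$ and, using Young's inequality, control each monomial by the terms appearing in the statement. Monomials of total degree at most two are dominated by $1 + \|\bar G_t\|_{op}^3 + \|\bar z^t\|^3 + h^2$. Among the cubic monomials, writing $G=\|\bar G_t\|_{op}$, $Z=\|\bar z^t\|$ for brevity, the ``dangerous'' one is $G^2 h$, which I handle via $G^2 h = G^{3/2}\cdot G^{1/2}h \leq \tfrac12 G^3 + \tfrac12 Gh^2$; the other cross terms ($G^2 Z$, $GZ^2$, $GZh$) reduce to $G^3 + Z^3 + Gh^2$ analogously, and $G^3$ and $Gh^2$ are already in the target list. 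Integrating over $t\in[0,1]$ and using the convexity bounds $\|\bar G_t\|_{op} \leq \|\bar G\|_{op} + \|\bar G'\|_{op}$, $\|\bar z^t\| \leq \|\bar z\| + \|\bar z'\|$, together with $(a+b)^3 \leq 4(a^3+b^3)$, yields the claimed inequality.

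The main obstacle is this last step: ensuring that no $h^3$ term in isolation appears in the bound. This forces a careful, asymmetric application of Young's inequality on the mixed monomials $G^2 h$ and $GZh$, splitting the power of $G$ so that each factor of $h$ remains paired with at least one factor of $\|\bar G_t\|_{op}$. The rest of the argument is bookkeeping: combining the Cauchy--Schwarz bound with Lemma \ref{lem2} and summing the contributions over $t$.
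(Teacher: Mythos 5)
Your proposal is correct and follows essentially the same route as the paper: the paper computes $\partial_{g_{ki}}F$ and $\partial_{z_k}F$ directly, bounds $\|\nabla F(G,z)\|$ by $\tfrac{K}{\sqrt N}(\|\bar z\|^3+\|\bar G\|_{op}^3+1+\|\bar G\|_{op}h^2+h^2)$ using the growth conditions, Cauchy--Schwarz and Lemma~\ref{lem2}, and integrates along the segment — your interpolation derivative $f'(t)$ is exactly this gradient paired with the displacement. The only difference is presentational (you make the Young-inequality bookkeeping that avoids a bare $h^3$ explicit, which the paper leaves implicit), so there is nothing to add.
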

\noindent
    \emph{Proof.} First of all,
  \begin{align*}
      \partial_{g_{ki}}F(G,z)&=\frac{1}{N^{3/2}}\sum_{\ell \le n}\Bigl\la\Bigl(u'\big(S_k(\sigma^\ell)+z_k\big)+{\eta}\partial_{x_\ell}\psi\bigl(S_k(\sigma^1)+z_k,\ldots,S_k(\sigma^n)+z_k\bigr)\Bigr)\sigma_i^\ell\Bigr\ra_{\lambda,\eta}.
  \end{align*}
  Using that $|u'(s)|\leq d(1+|s|)$ by (\ref{G1gen}) (below the constant $K$ amy change from place to place),
  \begin{align*}
      \sum_{k\le M}\sum_{i\le N}\Bigl\la\sum_{\ell \le n} u'\big(S_k(\sigma^\ell)+z_k\big)\sigma_i^\ell\Bigr\ra_{\lambda,\eta}^2
      &\leq  \Bigl\la
      \Bigl(\sum_{k\le M}\sum_{\ell \le n} u'\big(S_k(\sigma^\ell)+z_k\big)^2\Bigr)\sum_{i\le N}\sum_{\ell \le n}(\sigma_i^{\ell})^2\Bigr\ra_{\lambda,\eta}\\
      &\leq 3d\Bigl\la\Big(\sum_{k\le M}
      \sum_{\ell \le n} \bigl(1+S_k(\sigma^\ell)^2+z_k^2\bigr)\Big)\|\vec\sigma\|^2\Big\ra_{\lambda,\eta}\\
      &\leq 3d\Bigl\la 
      \bigl(nM+\|\bar G\|_{op}^2\|\vec\sigma\|^2+n\|z\|^2\bigr)\|\vec\sigma\|^2\Big\ra_{\lambda,\eta}\\
      &\leq KN^2 \bigl(\|\bar z\|^6+\|\bar G\|_{op}^6+1+\|\bar G\|_{op}^2h^{4}+h^4\bigr)
  \end{align*}
  where the last inequality used \eqref{lem2:eq1}. Note that from \eqref{bound}, there exists a constant $d'$ such that 
  \begin{align*}
      \bigl|\partial_{s_\ell}\psi(s)\bigr|&\leq d'\bigl(1+\|s\|\bigr).
  \end{align*}
  Using this inequality, we can argue similarly that
  \begin{align*}
      &\sum_{k\le M}\sum_{i\le N}\Bigl\la\sum_{\ell \le n}\partial_{x_\ell}\psi\bigl(S_k(\sigma^1)+z_k,\ldots,S_k(\sigma^n)+z_k\bigr)\sigma_i^\ell\Bigr\ra_{\lambda,\eta}^2\\
      &\qquad\leq {d'}^2\Bigl\la
      \Bigl(\sum_{k\le M}\sum_{\ell \le n} \Bigl[1+\Bigl(\sum_{\ell'=1}^n|S_k(\sigma^{\ell'})+z_k|^2\Bigr)^{1/2}\Bigr]^2\Bigr)\sum_{i\le N}\sum_{\ell \le n}(\sigma_i^{\ell})^{2}\Bigr\ra_{\lambda,\eta}\\
      &\qquad\leq 4{d'}^2n\Bigl\la
      \Bigl(\sum_{k\le M}\Bigl[1+\sum_{\ell'=1}^n(S_k(\sigma^{\ell'})^2+z_k^2)\Bigr]\Bigr)\|\vec\sigma\|^2\Bigr\ra_{\lambda,\eta}\\
      &\qquad\leq 4{d'}^2n\Bigl\la
      \Bigl(M+\|\bar G\|_{op}^2\|\vec\sigma\|^2+n\|z\|^2\Bigr)\|\vec\sigma\|^2\Bigr\ra_{\lambda,\eta}\\
      &\qquad\leq K'N^2\bigl(\|\bar z\|^6+\|\bar G\|_{op}^6+1+\|\bar G\|_{op}^2h^{4}+h^4\bigr).
  \end{align*}
  Putting these together yields that
  \begin{align*}
      \sum_{k\le M}\sum_{i\le N}|\partial_{g_{ki}}F(G,z)|^2&\leq \frac{K''}{N}\bigl(\|\bar z\|^6+\|\bar G\|_{op}^6+1+\|\bar G\|_{op}^2h^{4}+h^4\bigr).
  \end{align*}
  In similar manner, we readily compute that
  \begin{align*}
      \partial_{z_k}F(G,z)&=\frac{1}{N}\sum_{\ell \le n}\Bigl\la u'\big(S_k(\sigma^\ell)+z_k\big)+{\eta}\partial_{x_\ell}\psi\bigl(S_k(\sigma^1)+z_k,\ldots,S_k(\sigma^n)+z_k\bigr)\Bigr\ra_{\lambda,\eta}
  \end{align*}
  and 
  \begin{align*}
      \sum_{k\le M} |\partial_{z_k}F(G,z)|^2&\leq \frac{K'''}{N}\bigl(\|\bar z\|^4+1+h^{4}\bigr).
  \end{align*}
    All together, we get that
    $$
        \|\nabla F(G,z)\| \leq \frac{K_4}{\sqrt{N}}\bigl(\|\bar z\|^3+\|\bar G\|_{op}^3+1+\|\bar G\|_{op}h^{2}+h^2\bigr),
    $$  
    for some $\delta>0$, which implies Lemma \ref{concentrationfreeenergy1}. \begin{flushright}$\square$\end{flushright}

\begin{lemma}\label{lem:conc_fe}
  There exist absolute constants $\eps>0$ and $K>0$ such that, for all $|\lambda|,|\eta|\leq \eps$,
  \begin{align*}
      \mathrm{Var}F_N(\lambda,\eta)\leq \frac{K(1+h^{4})}{N}.
  \end{align*}
\end{lemma}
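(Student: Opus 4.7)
The plan is to apply the Gaussian Poincar\'e inequality to $F_N(\lambda,\eta)$ viewed as a smooth function of the jointly Gaussian variables $(G,z)$. Since $G$ has i.i.d.\ standard normal entries and $z_k \sim \mathcal{N}(0,\Delta_*)$ independently of $G$, Gaussian Poincar\'e yields
\begin{align*}
\mathrm{Var}\, F_N(\lambda,\eta) \leq \E\|\nabla_G F_N\|^2 + \Delta_*\,\E\|\nabla_z F_N\|^2.
\end{align*}

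The next step is to reuse the gradient bounds already derived inside the proof of Lemma \ref{concentrationfreeenergy1}. Those computations bound $\sum_{k,i}|\partial_{g_{ki}} F_N|^2$ and $\sum_k |\partial_{z_k} F_N|^2$ pointwise in $(G,z)$ and assemble into
\begin{align*}
\|\nabla F_N(G,z)\|^2 \leq \frac{K}{N}\bigl(\|\bar z\|^6 + \|\bar G\|_{op}^6 + 1 + \|\bar G\|_{op}^2 h^4 + h^4\bigr),
\end{align*}
for some constant $K$ depending only on the model parameters, valid for $|\lambda|,|\eta|\leq \eps$ small. So no new gradient calculation is required; I simply quote what was already produced.

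It then remains to show that $\E\|\bar z\|^6$, $\E\|\bar G\|_{op}^6$, and $\E\|\bar G\|_{op}^2$ are all $O(1)$ uniformly in $N$. For $\|\bar G\|_{op}$ this is immediate from the subgaussian tail bound \eqref{Gnormconc} already recalled in the proof of Lemma \ref{lem3}; integrating the tail yields uniformly bounded moments of every order. For $\|\bar z\|$ the same conclusion follows from standard Gaussian concentration for the Lipschitz map $z\mapsto\|z\|$. Combining these uniform moment bounds with the pointwise gradient estimate immediately gives $\E\|\nabla F_N\|^2 \leq K(1+h^4)/N$, which together with Gaussian Poincar\'e yields the stated variance estimate.

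The only subtlety, rather than a real obstacle, is that Lemma \ref{concentrationfreeenergy1} only provides a \emph{random} pseudo-Lipschitz constant (polynomial in $\|\bar G\|_{op}$ and $\|\bar z\|$), so a direct application of Gaussian Lipschitz concentration to $F_N$ is unavailable. Gaussian Poincar\'e circumvents this because it only requires integrability of $\|\nabla F_N\|^2$, and this is exactly what the polynomial-in-norms bound delivers once combined with the uniform moment estimates for the gaussian matrix operator norm and noise norm.
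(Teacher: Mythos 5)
Your proof is correct, but it takes a genuinely more direct route than the paper. The paper does not apply the Poincar\'e inequality to $F_N$ itself: it first builds a globally Lipschitz surrogate $\Phi(y):=\inf\{f(y')+K(2L_h+1)\|\bar y-\bar y'\|: r(y')\le L_h\}$ (an infimal-convolution Lipschitz extension off the high-probability event where the random pseudo-Lipschitz constant $r(y)=\|\bar G\|_{op}^3+\|\bar z\|^3+\|\bar G\|_{op}h^2$ is bounded), applies Gaussian--Poincar\'e to $\Phi$, whose gradient is deterministically bounded by $K(2L_h+1)/\sqrt N$, and then pays for the replacement by estimating $\e|f-\Phi|^2$ and $|\e f-\e\Phi|^2$ via Cauchy--Schwarz on the exponentially rare complement, using the polynomial growth of $f$ and $\Phi$. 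You instead observe that Poincar\'e only requires $\e\|\nabla F_N\|^2<\infty$, quote the pointwise gradient bound $\|\nabla F_N\|\le K N^{-1/2}(\|\bar z\|^3+\|\bar G\|_{op}^3+1+\|\bar G\|_{op}h^2+h^2)$ already assembled in the proof of Lemma~\ref{concentrationfreeenergy1} (which is indeed an almost-sure bound, since it rests on the pointwise moment estimate of Lemma~\ref{lem2}), and close with the uniform moment bounds for $\|\bar G\|_{op}$ and $\|\bar z\|$ coming from \eqref{Gnormconc} and Gaussian concentration; the factor $\Delta_*$ from the non-standard variance of $z$ is absorbed into $K$. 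This is shorter and entirely legitimate for a variance bound. What the paper's truncation buys is a template that is actually needed elsewhere: in the proof of Proposition~\ref{ThmOConc} the same construction is reused so that the Brascamp--Lieb inequality can be applied to an observable with a genuinely bounded gradient, and the truncation would also be the right tool if one wanted exponential tail bounds for $F_N$ rather than just a second-moment estimate. Your closing remark correctly diagnoses why a direct Lipschitz-concentration argument is unavailable and why Poincar\'e sidesteps it.
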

\noindent
    \emph{Proof.} For simplicity of notation, denote 
    $$
        y:=(G,z),\quad f(y):=F(G,z), \quad \mbox{ and }\quad r(y):=\|\bar G\|_{op}^3+\|\bar z\|^3+\|\bar G\|_{op}h^2.
    $$ 
    If we define $\bar y:=y/\sqrt{N}$, then the above Lemma proves that
    \begin{align*}
      \big|f(y)-f(y')\big|\leq K \bigl(r( y)+r( y')+1\bigr) \|\bar y-\bar y'\|,
  \end{align*}
    Let $L > 0$ be a constant to be chosen later and $L_h:=L(1+h^2)$. Since, for $r( y),r( y')\leq L_h$,
  \begin{align*}
      f(y)&\leq f(y')+K(2L_h+1)\|\bar y-\bar y'\|
  \end{align*}
    and we have equality for $y'=y$, if we define
  \begin{align*}
      \Phi(y)&:=\inf\bigl\{f(y')+K(2L_h+1)\|\bar y-\bar y'\| : r(y')\leq L_h\bigr\},
  \end{align*}
    then $\Phi(y)=f(y)$ for all $y$ such that $r(y)\leq L_h$. From this definition, it can be seen from the reverse triangle inequality that
  \begin{align}\label{eq3a}
      \big|\Phi(y)-\Phi(y')\big|&\leq K(2L_h+1) \|\bar y-\bar y'\| = \frac{K(2L_h+1)}{\sqrt N} \|y-y'\| 
  \end{align}
  and
  \begin{align}\label{eq4a}
      \big|\Phi(y)-f(y)\big|&\leq \bigl(|\Phi(y)|+|f(y)|\bigr)I(r(y)>L_h).
  \end{align}
    By (\ref{eq3a}) and the Gaussian-Poincar\'e concentration inequality,
    \begin{align}\label{eq5a}
      \e \bigl|\Phi(y)-\e\Phi(y)\bigr|^2\leq \e \|\nabla \Phi(y)\|^2\le   \frac{K^2 (2L_h+1)^2}{N}\le \frac{K'(1+ h^4)}{N}.
    \end{align}
    Next, using \eqref{eq4a} and the Cauchy-Schwarz inequality, we control 
    \begin{align*}
      \big|\e f(y)-\e \Phi(y)\big|&\le \e\big|f(y)-\Phi(y)\big|\\ & \leq\e\bigl(|\Phi(y)|+|f(y)|\bigr) I(r(y)>L_h)\\
      &\leq \sqrt{2\bigl(\e f(y)^2+\e \Phi(y)^2\bigr)\p(r(y)\geq L_h)}.
    \end{align*}
    Recalling that $r(y) = \|\bar G\|_{op}^3+\|\bar z\|^3+\|\bar G\|_{op}h^2$, by (\ref{Gnormconc}) and gaussian concentration for $\|\bar z\|$, for large enough $L>0$, we have $\p(r(y)\geq L_h)\leq 4 \exp(-N)$. On the other hand, {by Lemma \ref{concentrationfreeenergy1} with $(G,z) = y$ and $(G',z') = 0$ we have}
    \begin{align*}
        f(y)^2\,&{\leq 2f(0)^2 + 2 K^2 \Bigl(\|\bar z\|^3+\|\bar G\|_{op}^3+1+\|\bar G\|_{op}h^2+h^2\Bigr)^2\frac{\|y\|^2}{N}}\\ & {\leq 2f(0)^2 + 2 K^2 N^4 \Bigl(\frac{\|\bar z\|^3}{N^{3/2}}+\frac{\|\bar G\|^3}{N^{3/2}}+\frac{1+h^{2}}{N^{3/2}}+\frac{\|\bar G\|h^{2}}{N^{3/2}}\Bigr)^2 \Big(\frac{\| \bar G\|^2}{N} + \frac{\|\bar z\|^2}{N}\Big),}
    \end{align*}
    {and since $|f(0)|$ does not depend on $N$ and all the moments of $\|\bar z\|$ and $\|\bar G\|/\sqrt{N}$ are uniformly bounded, we have that $\e f(y)^2 \leq K'(1+ h^{4}) N^4$. Similarly,}
    \begin{align*}
        {\Phi(y)^2 \leq 2f(0)^2 + 2 K^2 N (2L_h+1)^2 \Big(\frac{\| \bar G\|^2}{N} + \frac{\|\bar z\|^2}{N}\Big),}
    \end{align*}    
    {from which we conclude that $\e \Phi(y)^2 \leq K''(1+h^{4}) N$.} Finally, using the bound
      \begin{align*}
          &\e \bigl|f(y)-\e f(y)\bigr|^2
          \leq 3\bigl(\e \bigl|f(y)-\Phi(y)\bigr|^2+\e \bigl|\Phi(y)-\e \Phi(y)\bigr|^2+\bigl|\e\Phi(y)-\e f(y)\bigr|^2\bigr)
      \end{align*}  
    finishes the proof. \begin{flushright}$\square$\end{flushright}


\subsection{Proof of Proposition~\ref{ThmOConc}}\label{SecOverConc}

We will only consider the case of the overlap $R(\vec\sigma)$, since the proof for $Q(\vec\sigma)$ is almost identical.
In this section we will denote by $K$ a constant that depends only on the parameters of the model, but can vary from one occurrence to the next. Let us consider the event
\begin{equation*}
    E_N:=\Bigl\{\|\bar G\|_{op}\leq K\Bigr\}\bigcap\, \Bigl\{\|\bar z\|\leq K\Bigr\}.
\end{equation*}
By (\ref{Gnormconc}), we can and will choose $K\geq 1$ so that the probability of the event $E_N$ is at least $1-4e^{-N}$. From Lemma \ref{lem3}, 
\begin{align}
    \e\Bigl[\Bigl\la \Bigl| R(\vec\sigma)-\e\la R(\vec\sigma)\ra_{0,0}\Bigr|^2 \Bigr\ra_{0,0}I({E_N^c})\Bigr]&\leq 4\Bigl(\e\la R(\vec\sigma)^4\ra_{0,0}\,\p(E_N^c)\Bigr)^{1/2}\leq K \exp(-N/2).
    \label{RcontEcom}
\end{align}
Let $F_N'$ be the $\lambda$-derivative of $F_N$. In the remainder we will control the same expectation on the event $E_N$, and we will use the convexity of $F_N(\lambda,\eta)$ in $\lambda$, which implies that 
\begin{align}
    &\Bigl|\la R(\vec\sigma)\ra_{0,0}-\e\la R(\vec\sigma)\ra_{0,0}\Bigr|
    =\Bigl|F_N'(0,0)-\e F_N'(0,0)\Bigr|\nonumber\\
    &\qquad\leq F_N'(\lambda,0)-F_N'(-\lambda,0)+\frac{1}{\lambda}\Bigl(|F_N(\lambda,0)-\e F_N(\lambda,0)|  \nonumber  
    \\
    &\qquad\qquad+|F_N(0,0)-\e F_N(0,0)|+|F_N(-\lambda,0)-\e F_N(-\lambda,0)|\Bigr).\label{RcontConv} 
\end{align}
Take $\eps>0$ such that, for $|\lambda|,|\eta|\leq \eps$, the inequalities in Lemmas~\ref{lem1.5}, \ref{lem2} and \ref{lem:conc_fe} hold. By Lemma~\ref{lem:conc_fe}, the expectation of the square of the term between the parentheses above can be bounded by $K/(N\lambda^2)$. 

In order to control the first term $F_N'(\lambda,0)-F_N'(-\lambda,0)$ above, our goal will be to control
\begin{align*}
    \partial_{\lambda\lambda}F_N(\lambda,\eta)&=N\Bigl\la\bigl|R(\vec\sigma)-\la R(\vec\sigma)\ra_{\lambda,\eta}\bigr|^2\Bigr\ra_{\lambda,\eta}.
\end{align*}
For simplicity of notation we denote $\vec s:= \vec\sigma/\sqrt{N}$. Since, for $|\lambda|,|\eta|\leq \eps$, the inequalities in Lemmas~\ref{lem1.5} and \ref{lem2} hold, on the event $E_N$, for some $L\geq 1$,
 \begin{align}
  \label{lem1.5:eq1ag}
  \big\la I( \|\vec s\|\geq L)\big\ra_{\lambda,\eta}\leq \exp(-KN)
  \quad\mbox{and}\quad
  \big\la \|\vec s\|^{10}\big\ra_{\lambda,\eta}\leq K.
\end{align}
Also, by Lemma \ref{lem:lip},
\begin{align}
    \big|R(\vec{\sigma})-R(\vec{\sigma}')\big|&\leq  D\Bigl(1+\|\vec s\|+\|\vec s'\|\Bigr)\|\vec s-\vec s'\|,
    \label{RlipDR}
\end{align}
where we also denoted $\vec s':= \vec\sigma'/\sqrt{N}$. 

We now proceed as in the proof of Lemma \ref{lem:conc_fe}. Take $L$ as in (\ref{lem1.5:eq1ag}) and $D$ as in (\ref{RlipDR}), denote $r(\vec\sigma):=\|\vec s\|$ and let
\begin{align*}
  \rho(\vec\sigma)&:=\inf\bigl\{R(\vec\sigma')+D(2L+1)\| \vec s-\vec s'\|:r(\vec s')\leq L\bigr\}.
\end{align*}
Then, from (\ref{RlipDR}), $\rho(\vec\sigma) = R(\vec\sigma)$ for all $r(\vec\sigma)\leq L$, 
\begin{align}\label{eq3aag}
  |\rho(\vec\sigma)-\rho(\vec\sigma')|&\leq D\bigl(2L+1\bigr) \|\vec s-\vec s'\|,
    \\
    \label{eq4aag}
  |\rho(\vec\sigma)-R(\vec\sigma)|&\leq \big(|\rho(\vec\sigma)|+|R(\vec\sigma)|\big)I(r(\vec\sigma)>L).
\end{align}
Using \eqref{eq4aag} and the Cauchy-Schwarz and Minkowski inequalities,
\begin{align*}
  \Bigl\la|\rho(\vec\sigma)-R(\vec\sigma)|^2\Bigr\ra_{\lambda,\eta}&\leq
  \Bigl\la\bigl(|\rho(\vec\sigma)|+|R(\vec\sigma)|\bigr)^2I(r(\vec\sigma)>L)\Bigr\ra_{\lambda,\eta}
  \\
  &\leq 2\Bigl(
  \big\la \rho(\vec\sigma)^4\big\ra_{\lambda,\eta}^{1/2}
    + \big\la R(\vec\sigma)^4\big\ra_{\lambda,\eta}^{1/2} 
  \Bigr)
  \big\la I(r(\vec\sigma)>L)\big\ra_{\lambda,\eta}^{1/2}
  \\
    &\leq 2\Bigl(
  \big\la \rho(\vec\sigma)^4\big\ra_{\lambda,\eta}^{1/2}
    + \big\la R(\vec\sigma)^4\big\ra_{\lambda,\eta}^{1/2} 
  \Bigr) \exp(-KN),
\end{align*}
where in the last line we used the first equation in (\ref{lem1.5:eq1ag}). Since $R(0)=\phi(0)$, by (\ref{RlipDR})  
$$  
    |R(\vec{\sigma})|\leq |\phi(0)| + D\bigl(1+\|\vec s\|\bigr)\|\vec s\|.
$$
Since $\rho(0) = R(0)=\phi(0)$, by (\ref{eq3aag}),  
$$  
    |\rho(\vec{\sigma})|\leq |\phi(0)| + D\big(2L+1\big)\|\vec s\|.
$$
Therefore, the second equation in (\ref{lem1.5:eq1ag}) implies that
$$
    \bigl\la \rho(\vec\sigma)^4\bigr\ra_{\lambda,\eta}^{1/2}
    + \bigl\la R(\vec\sigma)^4\bigr\ra_{\lambda,\eta}^{1/2}
    \leq K
$$
and we get that
\begin{equation}
    \Bigl\la|\rho(\vec\sigma)-R(\vec\sigma)|^2\Bigr\ra_{\lambda,\eta}\leq K \exp(-KN).  
    \label{ATrhoR}
\end{equation}

By Lemma \ref{lem:convex}, on the event $E_N$, if $|\lambda|+ K |\eta|\leq \kappa/(d n)$ then function \eqref{lem:convex:eq1} 
is convex, and thus the Hamiltonian $X_{N,\lambda,\eta}(\vec\sigma)$ in (\ref{HamXLE}) is strictly concave on $\Reals^{Nn}$ and its Hessian is bounded by $-\frac{\kappa}{2}\mathrm{Id}$. In this case, the Brascamp-Lieb inequality (see, e.g., \cite[Theorem 5.1]{brascampleb}) implies that
\begin{equation*}
    \Bigl\la\bigl|\rho(\vec\sigma)-\la \rho(\vec\sigma)\ra_{\lambda,\eta}\bigr|^2\Bigr\ra_{\lambda,\eta} \leq \frac{2}{\kappa} \Bigl\la \|\nabla \rho(\vec\sigma)\|^2\Bigr\ra_{\lambda,\eta}
    \leq  \frac{K}{N},
\end{equation*}
where the last inequality used (\ref{eq3aag}). Together with (\ref{ATrhoR}), this implies that, on the event $E_N$, 
\begin{align*}
    \partial_{\lambda\lambda}F_N(\lambda,\eta)&=N\Bigl\la\bigl|R(\vec\sigma)-\la R(\vec\sigma)\ra_{\lambda,\eta}\bigr|^2\Bigr\ra_{\lambda,\eta}\leq K,
\end{align*}
assuming as above that $|\lambda|,|\eta|\leq \eps$ and $|\lambda|+ K |\eta|\leq \kappa/(d n)$. In particular, 
\begin{equation}
    \Bigl\la\bigl|R(\vec\sigma)-\la R(\vec\sigma)\ra_{0,0}\bigr|^2\Bigr\ra_{0,0}\leq \frac{K}{N}
    \label{RcontEcomC}
\end{equation}
and $\bigl|F_N'(\lambda,0)-F_N'(-\lambda,0)\bigr|\leq K\lambda.$
Plugging this into (\ref{RcontConv}) and, as we mentioned above, using Lemma \ref{lem:conc_fe} for the second term, 
\begin{align*}
    \e\Bigl[\bigl|\la R(\vec\sigma)\ra_{0,0}-\e\la R(\vec\sigma)\ra_{0,0}\bigr|^2 I({E_N})\Bigr]
    &\leq K\Bigl(\lambda^2 +\frac{1}{N\lambda^2}\Bigr)\leq \frac{K}{\sqrt{N}},
\end{align*}
by optimizing over $\lambda$. Using (\ref{RcontEcomC}) on the event $E_N$ and then combining with (\ref{RcontEcom})  gives
\begin{align*}
    \e \Bigl\la \Bigl| R(\vec\sigma)-\e\la R(\vec\sigma)\ra_{0,0}\Bigr|^2 \Bigr\ra_{0,0}
    \leq \frac{K}{\sqrt{N}}.
\end{align*}
This proves the first inequality in (\ref{ThMRcond1}). The proof of the second inequality is identical. \begin{flushright}$\square$\end{flushright}

\section{Fixed point equations} \label{sec:5}
This section  establishes the proof of Theorem \ref{thm:conv_to_solution}. The key ingredient here is to show that equations \eqref{CPeqQ}--\eqref{CPeqRb} are satisfied by the subsequential limits of the overlaps, based on a cavity/leave-one-out argument and gaussian integration by parts.


\subsection{M-cavity}

For every measurable function $f:\Reals^{N n} \mapsto \Reals$, define the Gibbs expectation associated to the ``cavity in $M$'' by
\begin{equation*}
    \langle f \rangle_{c} := \frac{1}{Z_{c}^n} \int_{\Reals^{Nn}} \big(\prod_{\ell \le n} d\sigma^\ell\big)  f \exp{\sum_{\ell \leq n}H_{N,c}(\sigma^\ell)},
\end{equation*}
 where the M-cavity Hamiltonian is $$H_{N,c}(\sigma^\ell) := \sum_{k=2}^{M} u\bigl(S_k(\sigma^\ell)+z_k\bigr) + h \sum_{i\le N} \sigma_i^\ell - \kappa \|\sigma^\ell\|^2$$ and $Z_{c}$ is its corresponding partition function. In other words, the Hamiltonian here singles out the component $u (S_M(\sigma)+z_M )$ from $H_N(\sigma).$ It is easy to check that for every measurable $f : \Reals^{Nn} \mapsto \Reals$,
\begin{equation}\label{eq:cavity_mean_N}
    \langle f(\sigma^1,\ldots,\sigma^n)\rangle = \frac{\langle f(\sigma^1,\ldots,\sigma^n) \exp{\sum_{\ell\leq n}u(S_1(\sigma^\ell)+z_1)} \rangle_{c}}{\langle \exp{\sum_{\ell\leq n}u(S_1(\sigma^\ell)+z_1)} \rangle_{c}},
\end{equation}
where $\langle \,\cdot \, \rangle$ is the expectation with respect to the Gibbs measure proportional to $\exp \sum_{\ell\le n}H_N(\sigma^\ell)$ of the ``replicated system''.
The following two lemmas will play an essential role in establishing the fixed point equations in the next subsection.

\begin{lemma}\label{lem:uk_mean}
    For every $k\geq1$, there exists some $C_k > 0$ such that, uniformly on $N$, $$\e\big\langle |u(S_1(\sigma)+z_1)|^{k} \big\rangle \leq C_k.$$ 
\end{lemma}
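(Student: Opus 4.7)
The plan is to combine the $M$-cavity identity with a one-dimensional monotone correlation inequality in order to transfer the bound onto the cavity Gibbs expectation, where the independence between $(g_1,z_1)$ and the cavity measure can be exploited directly. Applying \eqref{eq:cavity_mean_N} with $n=1$ and $f(\sigma)=|u(S_1(\sigma)+z_1)|^k$ yields
\[
\langle |u(S_1+z_1)|^k\rangle = \frac{\langle |u(S_1+z_1)|^k\, e^{u(S_1+z_1)}\rangle_c}{\langle e^{u(S_1+z_1)}\rangle_c}.
\]
Setting $V:=-u(S_1+z_1)\ge 0$, the maps $V\mapsto V^k$ and $V\mapsto e^{-V}$ are respectively nondecreasing and nonincreasing in the scalar $V$. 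Pushing the cavity measure $\nu_c$ forward through $V$ and applying Chebyshev's integral inequality for oppositely monotone functions of a scalar variable yields $\langle V^k e^{-V}\rangle_c \le \langle V^k\rangle_c \cdot \langle e^{-V}\rangle_c$. Substituting back into the cavity identity gives the pointwise domination
\[
\langle |u(S_1+z_1)|^k\rangle \le \langle |u(S_1+z_1)|^k\rangle_c.
\]

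It thus suffices to bound $\mathbb{E}\langle|u(S_1+z_1)|^k\rangle_c$ uniformly in $N$. The payoff of working under the cavity measure is that $g_1=(g_{1i})_{i\le N}$ and $z_1$ are independent of the $\sigma$-algebra generated by $(g_{ki})_{k\ge 2,i\le N}$ and $(z_k)_{k\ge 2}$, which determines $\nu_c$. Conditionally on $\sigma$, the variable $S_1(\sigma)+z_1$ is Gaussian with mean zero and variance $R_{1,1}(\sigma)+\Delta_*$ in the joint law of $(g_1,z_1)$. Using $|u(s)|\le d(1+s^2)$ (a direct consequence of Assumption~\ref{ass:growth_u} via Taylor expansion and $\|u''\|_\infty\le d$) together with an explicit Gaussian moment computation gives
\[
\mathbb{E}_{g_1,z_1}|u(S_1(\sigma)+z_1)|^k \;\le\; C_k\bigl(1+R_{1,1}(\sigma)^k\bigr)
\]
uniformly in $\sigma$. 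Averaging over the remaining randomness reduces the task to controlling $\mathbb{E}\langle R_{1,1}^k\rangle_c$, which is bounded uniformly in $N$ by Lemma~\ref{lem3} applied to the cavity Gibbs measure (this measure has exactly the same structure as the original one up to dropping a single summand in $H_N$, so the proof of Lemma~\ref{lem3} transfers verbatim).

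The main conceptual step is the use of Chebyshev's monotone correlation inequality to strip the extra Gibbs weight $e^{u(S_1+z_1)}$ introduced by the cavity identity. Without it one would need to control $\mathbb{E}[1/\langle e^{u(S_1+z_1)}\rangle_c]$, which is delicate because the denominator can be small whenever $\nu_c$ places much of its mass where $|u|$ is large; Chebyshev sidesteps this entirely by exploiting the fact that the Gibbs weight and the observable are oppositely monotone functions of the same scalar $V$, turning a ratio of Gibbs averages into a clean pointwise domination.
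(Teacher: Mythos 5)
Your proposal is correct and follows essentially the same route as the paper: the cavity identity, the Harris/Chebyshev association inequality for the oppositely monotone functions $V\mapsto V^k$ and $V\mapsto e^{-V}$ to reduce to $\e\langle|u_1|^k\rangle_c$, then the Gaussian law of $S_1(\sigma)+z_1$ given $\sigma$ under the cavity measure together with $|u(s)|\le d'(1+s^2)$ and the uniform moment bounds on $R_{1,1}$. Your explicit remark that the moment bound of Lemma~\ref{lem3} transfers verbatim to the cavity measure is a point the paper leaves implicit.
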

\noindent
    \emph{Proof.} Here we will denote $u(S_1(\sigma)+z_1)$ by $u_1\le 0$. Because $|u_1|^k$ is an increasing function of $|u_1|$ and $\exp(-|u_1|)$ is decreasing, the Harris inequality implies $\langle |u_1|^k \exp(-|u_1|) \rangle_c \le \langle |u_1|^k\rangle_c \langle \exp(-|u_1|) \rangle_c$. As a consequence we have that
    \begin{equation*}
        \e\langle |u_1|^k \rangle = \e\frac{\langle |u_1|^k \exp(-|u_1|) \rangle_c}{\langle \exp(-|u_1|) \rangle_c} \leq \e\langle |u_1|^k \rangle_c.
    \end{equation*}
    Furthermore, because the standard gaussian vector $g_1 := (g_{11},\dots,g_{1N})$ is independent of $\sigma$ under the cavity measure, by the rotational invariance of $g_1$ we have that $\langle g_1, \sigma \rangle \sim g \|\sigma\|$ where $g$ is an independent standard gaussian variable, so under the cavity measure $S_1(\sigma)\sim g\|\sigma\|/\sqrt{N}=g\sqrt{R_{1,1}}$. Thus, by Assumption~\ref{ass:growth_u}
    \begin{equation*}
        \e\langle |u_1|^k \rangle_c \leq d^k \e\langle (1 + 2 g^2 R_{1,1} + 2 z_1^2)^k \rangle_c. 
    \end{equation*}
    Finally, by the independence of $R_{1,1}$ from $g$ and $z_1$ and Proposition~\ref{ThmOConc}, the right-hand side of this last equation is easily seen to be smaller than some constant that does not depend on $N$. \begin{flushright}$\square$\end{flushright}

\begin{lemma}\label{lem:eq_cavity_mean}
    There exist constants $K,K' > 0$ such that 
    \begin{equation*}
        \Big|\e\langle R_{1,1}\rangle_c - \e\langle R_{1,1}\rangle \Big| \leq \frac{K}{\sqrt{N}} \quad \mbox{and} \quad \Big|\e\langle R_{1,2}\rangle_c - \e\langle R_{1,2}\rangle \Big| \leq \frac{K'}{\sqrt{N}}.
    \end{equation*}
\end{lemma}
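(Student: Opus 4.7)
The plan is to compare $\e\la R_{1,1}\ra$ and $\e\la R_{1,1}\ra_c$ (and similarly for $R_{1,2}$) via an interpolation that avoids working directly with the ratio form of the cavity identity \eqref{eq:cavity_mean_N}. With $U:=\sum_{\ell\le n}u(S_1(\sigma^\ell)+z_1)$, I would define for $t\in[0,1]$
$$\la f\ra_t:=\frac{\la f\exp(tU)\ra_c}{\la \exp(tU)\ra_c},$$
which is the Gibbs expectation associated with the product over $\ell\le n$ of the strictly log-concave single-replica Hamiltonian $H_{N,c}(\sigma)+tu(S_1(\sigma)+z_1)$. Thus $\la\cdot\ra_0=\la\cdot\ra_c$ and $\la\cdot\ra_1=\la\cdot\ra$, and a direct differentiation yields $\frac{d}{dt}\la f\ra_t=\Cov_t(f,U)$, so
$$\e\la f\ra-\e\la f\ra_c=\int_0^1\e\,\Cov_t(f,U)\,dt.$$
I would then estimate the integrand by Cauchy-Schwarz, first under the Gibbs expectation and then under $\e$, to obtain
$$\e\bigl|\Cov_t(f,U)\bigr|\le \sqrt{\e[\Var_t(f)]\cdot\e[\Var_t(U)]}.$$

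The crucial input is the bound $\e[\Var_t(f)]\le K/N$, uniformly in $t\in[0,1]$. This is precisely the Brascamp-Lieb step used to derive \eqref{RcontEcomC} in the proof of Proposition~\ref{ThmOConc}, transplanted to the interpolated measure. The interpolated single-replica Hamiltonian is strictly log-concave with Hessian bounded above by $-2\kappa\,\mathrm{Id}$, because $-\kappa\|\sigma\|^2$ is preserved and $tu$ remains concave for $t\ge 0$; the measure relevant to $R_{1,2}$ is the corresponding product over two replicas, still strictly log-concave in $(\sigma^1,\sigma^2)\in\mathbb{R}^{2N}$. Since $\|\nabla R_{1,1}\|^2=4R_{1,1}/N$ and $\|\nabla R_{1,2}\|^2=(R_{1,1}+R_{2,2})/N$, Brascamp-Lieb combined with the uniform moment bound from Lemma~\ref{lem3} (whose proof, via Lemma~\ref{lem2}, depends only on the strict log-concavity and Assumption~\ref{ass:growth_u}, both preserved under the interpolation) yields $\e[\Var_t(f)]\le K/N$ for $f\in\{R_{1,1},R_{1,2}\}$.

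For the second factor, $\e[\Var_t(U)]\le \e\la U^2\ra_t\le C$ uniformly in $t$ by a straightforward adaptation of Lemma~\ref{lem:uk_mean}: the Harris-inequality reduction used there transfers verbatim with $e^{tU_1}$ in place of $e^{U_1}$, and the remaining step bounds $\e\la(1+S_1(\sigma)^2+z_1^2)^2\ra_c$ using the independence of $g_1$ from $\sigma$ under the cavity measure together with the moment bounds of Proposition~\ref{ThmOConc} on $R_{1,1}$. Combining the two factors gives $\e|\Cov_t(f,U)|\le K/\sqrt{N}$ for all $t\in[0,1]$, and integrating in $t$ yields both bounds of the lemma. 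The main technical obstacle is to verify that the constants in the estimates of Section~\ref{sec:4} stay uniform in the interpolation parameter $t$; this is routine because for $t\in[0,1]$ the Hamiltonian $H_{N,c}+tu(S_1+z_1)$ is a concave perturbation of $H_{N,c}$ lying in the class considered there, so the constants depend only on the fixed model parameters.
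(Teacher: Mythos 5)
Your proposal is correct and follows essentially the same route as the paper: interpolate between the cavity measure and the full measure by switching on $u(S_1(\sigma)+z_1)$, differentiate to get a covariance with $u_1$, apply Cauchy--Schwarz, bound the overlap fluctuation by $K/N$ via Brascamp--Lieb, and bound the moments of $u_1$ uniformly in $t$ as in Lemma~\ref{lem:uk_mean}. The only cosmetic difference is that you phrase the second factor as $\Var_t(U)\le\e\la U^2\ra_t$ where the paper uses $\e\la u_1^2\ra_t$ directly.
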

\noindent
    \emph{Proof.} As in the previous lemma, we will denote $u(S_1(\sigma)+z_1)$ by $u_1$. Let $$H_{N,t}(\sigma) := H_{N,c}(\sigma) + t u_1(\sigma)$$ for $t\in[0,1]$, and denote by $\langle\, \cdot \,\rangle_t$ the expectation with respect to the associated Gibbs measure. We then clearly have that
    \begin{equation*}
        \e\langle R_{1,1}\rangle_{c} = \e\langle R_{1,1}\rangle_{t=0}  \quad \mbox{and} \quad \e\langle R_{1,1}\rangle = \e\langle R_{1,1}\rangle_{t=1}.
    \end{equation*}
    Along this interpolation,
    \begin{equation*}
        \begin{split}
            \Big| \frac{d}{dt} \e\langle R_{1,1}\rangle_t \Big| & = \Big| \e\langle (R_{1,1}-  \langle R_{1,1}\rangle_t) u_1 \rangle_t \Big|  \leq \sqrt{\e\langle (R_{1,1}-  \langle R_{1,1}\rangle_t)^2\rangle_t} \times \sqrt{\e\langle u_1^2 \rangle_t} \leq \frac{K}{\sqrt{N}}\times K',
        \end{split}
    \end{equation*}
    for some constants $K,K' > 0$ that do not depend on $t\in[0,1]$. Here, the bound for the first factor follows from the Brascamp-Lieb inequality, while the bound on the second term is a generalization of Lemma~\ref{lem:uk_mean}. The proof for $R_{1,2}$ is similar. \begin{flushright}$\square$\end{flushright}

\subsection{Consistency equations for the overlaps} Throughout this subsection, we will repeatedly use the concentration results for the generalized overlaps $Q$ and $R$ from Section \ref{sec:4} that are applicable to the overlaps $Q_{1,1}$ using $\psi(s)=u''(s)+u'(s)^2$, $Q_{1,2}$ using $\psi(s_1,s_2)=u'(s_1)u'(s_2)$, $R_{1,1}$ using $\phi(s)=s^2$, and $R_{1,2}$ using $\phi(s_1,s_2)=s_1s_2$.
Denote $$\rho_N:=\e\langle R_{1,1}\rangle, \quad q_N:=\e\langle R_{1,2}\rangle, \quad \bar r_N := \e\langle Q_{1,1}\rangle \quad \mbox{and} \quad r_N := \e\langle Q_{1,2}\rangle.$$
 By Proposition~\ref{ThmOConc}, there exists some compact set $K$ such that for all $N\geq 1$, $(\rho_N,q_N,\bar r_N,r_N)$ belongs to $K$. Thus, there exists a subsequence $(N_m)_{m\ge 1}$ of system sizes along which $$(\rho_{m},q_{m},\bar r_{m}, r_{m}):=(\rho_{N_m},q_{N_m},\bar r_{N_m}, r_{N_m})\xrightarrow{m\to +\infty}(\rho,q,\bar r, r)$$ for some $(\rho,q,\bar r, r)$ in $K$. For the rest of this section, we work with this convergent subsequence and we aim to show that $(\rho,q,\bar r,r)$ satisfies \eqref{CPeqQ}--\eqref{CPeqRb} in Propositions \ref{prop:hat_fix_point} and \ref{prop:til_fix_point}. To begin with, we establish

\begin{lemma}\label{lem:gaussian_approx}
Let, for $n\geq1$, $f : \Reals^{n} \mapsto \Reals$ be continuous and bounded and 
\begin{align}
\theta_\ell := \sqrt{\rho-q}\,\xi_\ell+\sqrt{q+\Delta_*}\,\tilde z \quad \mbox{for} \quad \ell\le n, \label{thetal}    
\end{align}
with $\tilde z,(\xi_\ell)_{\ell \le n}$ i.i.d. standard gaussian variables. Under Assumption~\ref{ass:growth_u}, we have that
    \begin{equation*}
        \e\big\langle f(S_1(\sigma^1)+z_1,\dots,S_1(\sigma^n)+z_1)\big\rangle_c\xrightarrow{m\to+\infty} \e f(\theta_1,\dots,\theta_n).
    \end{equation*}
\end{lemma}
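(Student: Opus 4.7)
The plan is to exploit the structure of the $M$-cavity measure $\langle\cdot\rangle_c$: because the cavity Hamiltonian $H_{N,c}$ does not involve the first row $g_1=(g_{1i})_{i\le N}$ of $G$, the vector $g_1$ is independent of the replicas $(\sigma^\ell)_{\ell\le n}$ under $\langle\cdot\rangle_c$. Consequently, conditionally on $(\sigma^\ell)_{\ell\le n}$, the vector $(S_1(\sigma^\ell))_{\ell\le n}=(N^{-1/2}\langle g_1,\sigma^\ell\rangle)_{\ell\le n}$ is centered gaussian with covariance $R_{\ell,\ell'}=N^{-1}\langle\sigma^\ell,\sigma^{\ell'}\rangle$. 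Since $z_1\sim\mathcal{N}(0,\Delta_*)$ is independent of $(g_1,\sigma)$, the augmented vector $(S_1(\sigma^\ell)+z_1)_{\ell\le n}$ is centered gaussian with covariance $C(\vec\sigma):=(R_{\ell,\ell'}+\Delta_*)_{\ell,\ell'\le n}$.

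Define $g(C):=\E f(\eta)$ for $\eta\sim\mathcal{N}(0,C)$; since $f$ is bounded and continuous on $\R^n$, $g$ is bounded and continuous as a function of the covariance $C$. By the tower property and the previous step,
\begin{align*}
\E\bigl\la f\bigl(S_1(\sigma^1)+z_1,\ldots,S_1(\sigma^n)+z_1\bigr)\bigr\ra_c=\E\bigl\la g(C(\vec\sigma))\bigr\ra_c.
\end{align*}
The target limit $\E f(\theta_1,\ldots,\theta_n)$ in the lemma statement equals $g(C^\star)$ with $C^\star_{\ell\ell}=\rho+\Delta_*$ and $C^\star_{\ell\ell'}=q+\Delta_*$ for $\ell\neq\ell'$, as can be directly read off from \eqref{thetal}. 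Thus it suffices to prove $\E\la g(C(\vec\sigma))\ra_c\to g(C^\star)$ along the subsequence $(N_m)$.

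To get this, I would establish that $C(\vec\sigma)\to C^\star$ in $L^2(\E\la\cdot\ra_c)$. The replicated cavity measure is a product measure, so the $\sigma^\ell$'s are i.i.d.\ under $\la\cdot\ra_c$ and, by exchangeability, each $R_{\ell,\ell'}$ (resp.\ $R_{\ell,\ell}$) has the same law as $R_{1,2}$ (resp.\ $R_{1,1}$). Proposition~\ref{ThmOConc} applies to the cavity system because the latter is structurally identical to the full system with $M$ replaced by $M-1$; this gives $L^2$ concentration of $R_{1,1},R_{1,2}$ around their $\E\la\cdot\ra_c$-means. Lemma~\ref{lem:eq_cavity_mean} shows these means differ from $\rho_N,q_N$ by $O(N^{-1/2})$, and by definition of the subsequence $(\rho_{N_m},q_{N_m})\to(\rho,q)$. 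Continuity and boundedness of $g$ combined with dominated convergence then deliver the claim.

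The only delicate point is the transfer of Proposition~\ref{ThmOConc} from the pair $(1,2)$ (and the single index $1$) to arbitrary replica pairs in the cavity system; this is however immediate from the product structure and exchangeability of $\la\cdot\ra_c$. Everything else—conditional gaussianity via cavity independence of $g_1$, continuity of $g$, and convergence of the first two overlap moments—is routine once this observation is in place.
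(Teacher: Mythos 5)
Your proposal is correct and follows essentially the same route as the paper: condition on the replicas to see that $(S_1(\sigma^\ell)+z_1)_{\ell\le n}$ is gaussian with covariance $(R_{\ell,\ell'}+\Delta_*)$, reduce to a bounded continuous function $g$ of that covariance, show the covariance converges in $L^2$ via Proposition~\ref{ThmOConc} and Lemma~\ref{lem:eq_cavity_mean}, and conclude by continuous mapping. Your explicit remark that the overlap concentration must be applied to the cavity system (i.e.\ with $M-1$ rows) and propagated to all replica pairs by exchangeability is a point the paper leaves implicit, but it is handled correctly.
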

\noindent
    \emph{Proof.} Along the proof $\ell$ and $\ell'$ always belong to $\{1,\ldots,n\}$. Recall $S_1^\ell:=S_1(\sigma^\ell)$. Because $f$ is continuous and bounded and because for fixed values of $\sigma^1,\dots,\sigma^\ell$ the quantity $(S_1^1+z_1,\dots,S_1^n+z_1)$ is a gaussian vector with covariance matrix $\Sigma_m$ with entries $\Sigma_{m,\ell \ell'} = R_{\ell, \ell'} + \Delta_*$, we have that there exists some continuous and bounded $g: \Reals^{n\times n} \mapsto \Reals$ such that
    \begin{equation}\label{eq:gaussian_mean}
        \e\big\langle f(S_1^1+z_1,\dots,S_1^n+z_1)\big\rangle_c = \e\langle g(\Sigma_m) \rangle_c.
    \end{equation}
    By Proposition~\ref{ThmOConc} and Lemma~\ref{lem:eq_cavity_mean}, $\Sigma_m \xrightarrow{L^2} \Sigma$ as $m\to+\infty$ where $$\Sigma_{\ell \ell'} = (\rho+\Delta_*) {I}(\ell = \ell') + (q+\Delta_*) I(\ell\neq \ell').$$ Then, by the continuous mapping theorem, $\e\langle g(\Sigma_m) \rangle_c \to g(\Sigma)$. Observe that $\Sigma = D D^\intercal$, where
    \begin{equation*}
        D_{k k'} = \sqrt{q+\Delta_*} \ {I}(k'=1) + \sqrt{\rho-q} \ {I}(k'=k+1) \in \mathbb{R}^{n\times n+1}.
    \end{equation*}
    We can then define i.i.d. standard gaussian variables $v := (\tilde z,\xi_1,\dots,\xi_n)$ so that $\theta_\ell := (D v)_{\ell}$ is \eqref{thetal}, and then by reversing relation \eqref{eq:gaussian_mean} we get $g(\Sigma) = \e f(\theta_1,\dots,\theta_n)$. \begin{flushright}$\square$\end{flushright}

\begin{proposition}\label{prop:hat_fix_point}
 The vector $(\rho,q,\bar r,r)$ satisfies   \eqref{CPeqR} and \eqref{CPeqRb}.
\end{proposition}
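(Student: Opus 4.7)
The plan is to apply the $M$-cavity identity \eqref{eq:cavity_mean_N} combined with Lemma~\ref{lem:gaussian_approx} to the cavity forms of $r_N:=\e\langle Q_{1,2}\rangle$ and $\bar r_N:=\e\langle Q_{1,1}\rangle$, and identify their limits along the convergent subsequence with the right-hand sides of \eqref{CPeqR}--\eqref{CPeqRb}. First, by exchangeability of the rows of $(G,z)$ in $k\le M$, every summand defining $Q_{1,1}$ and $Q_{1,2}$ in \eqref{overlapsQ} has the same expectation, so
\begin{equation*}
\bar r_N = \alpha_N\,\e\langle u''(S_1+z_1)+u'(S_1+z_1)^2\rangle, \qquad r_N = \alpha_N\,\e\langle u'(S_1^1+z_1)\,u'(S_1^2+z_1)\rangle,
\end{equation*}
with $\alpha_N:=M/N\to\alpha$. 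Applying the cavity identity and using that the replicas are independent under $\langle\,\cdot\,\rangle_c$ (the cavity Hamiltonian is additive in $\ell$), the quadratic expectation for $r_N$ collapses to the square of a one-replica cavity ratio, so
\begin{equation*}
r_N/\alpha_N = \e\!\left(\frac{\langle u'(S_1+z_1)\,e^{u(S_1+z_1)}\rangle_c^{(1)}}{\langle e^{u(S_1+z_1)}\rangle_c^{(1)}}\right)^{\!2}, \qquad \bar r_N/\alpha_N = \e\,\frac{\langle(u''+u'^2)(S_1+z_1)\,e^{u(S_1+z_1)}\rangle_c^{(1)}}{\langle e^{u(S_1+z_1)}\rangle_c^{(1)}}.
\end{equation*}

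Next, conditionally on the cavity (i.e.\ on $G\setminus g_1$ and $z\setminus z_1$), the external pair $(g_1,z_1)$ is independent of $\sigma\sim\mu_c:=\langle\,\cdot\,\rangle_c^{(1)}$. Setting $\bar\sigma:=\langle\sigma\rangle_c^{(1)}$ and $w_N:=z_1+g_1\!\cdot\!\bar\sigma/\sqrt N$, I decompose $\theta(\sigma):=S_1(\sigma)+z_1 = w_N+g_1\!\cdot\!(\sigma-\bar\sigma)/\sqrt N$. The identity $\|\bar\sigma\|^2/N=\langle R_{1,2}\rangle_c^{(2)}$ coming from the independence of cavity replicas, together with Proposition~\ref{ThmOConc} and Lemma~\ref{lem:eq_cavity_mean}, gives $\|\bar\sigma\|^2/N\to q$ and $\mathrm{tr}\,\mathrm{Cov}_c(\sigma)/N\to\rho-q$ in $L^2$. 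Consequently $w_N/\sqrt{q+\Delta_*}$ converges in distribution to $\tilde z\sim\mathcal N(0,1)$ (the outer Gaussian coming from $(g_1,z_1)$), while the inner fluctuation $g_1\!\cdot\!(\sigma-\bar\sigma)/\sqrt N$ under $\sigma\sim\mu_c$ is asymptotically $\sqrt{\rho-q}\,\xi$ with $\xi\sim\mathcal N(0,1)$. Substituting, the one-replica ratio inside each expectation converges to the deterministic function
\begin{equation*}
\hat\Phi(\tilde z)\,:=\,\frac{\e_\xi u'(\theta)\,e^{u(\theta)}}{\e_\xi e^{u(\theta)}}, \qquad \theta=\sqrt{\rho-q}\,\xi+\sqrt{q+\Delta_*}\,\tilde z,
\end{equation*}
with the analogous identification for $\bar r_N$ obtained by replacing $u'(\theta)$ with $(u''+u'^2)(\theta)$ in the numerator. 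A continuous-mapping argument then yields $r/\alpha=\e_{\tilde z}\hat\Phi(\tilde z)^2$ and the corresponding formula for $\bar r/\alpha$, which are exactly \eqref{CPeqR} and \eqref{CPeqRb}.

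The delicate step is the identification of the inner fluctuation with $\sqrt{\rho-q}\,\xi$, since $\mathrm{Cov}_c(\sigma)$ is not a priori close to a scalar multiple of the identity. The cleanest route is to apply Lemma~\ref{lem:gaussian_approx} at the joint level: for fixed replicas $(\sigma^1,\sigma^2)$ the integration over $(g_1,z_1)$ turns $(S_1^1+z_1,S_1^2+z_1)$ into a Gaussian vector whose covariance depends only on the overlaps $R_{1,1},R_{1,2}$, and one then uses the $L^2$-concentration of these overlaps under $\langle\,\cdot\,\rangle_c$. The remaining obstacle is that the quantity of interest is a ratio rather than a plain average, which is handled by a concentration estimate for the cavity denominator $\langle e^{u(S_1+z_1)}\rangle_c^{(1)}$; this follows from the uniform bound $e^{u}\in(0,1]$ together with Brascamp-Lieb applied to the log-concave $\mu_c$, ensuring that the inner integral is essentially a function of $w_N$ alone in the limit.
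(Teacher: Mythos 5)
Your skeleton is the right one and matches the paper's: cavity in $M$ via \eqref{eq:cavity_mean_N}, factorization over replicas under $\langle\,\cdot\,\rangle_c$, and identification of the limiting law of $(S_1(\sigma^1)+z_1,\dots,S_1(\sigma^n)+z_1)$ through concentration of the overlaps (Lemma~\ref{lem:gaussian_approx}). Your instinct to integrate over $(g_1,z_1)$ conditionally on the replicas, rather than trying to prove a CLT for $g_1\cdot(\sigma-\bar\sigma)/\sqrt N$ under the cavity measure, is also the right one; note in passing that your first decomposition $S_1+z_1=w_N+g_1\cdot(\sigma-\bar\sigma)/\sqrt N$ does not give two independent pieces for fixed $\sigma$ (both involve $g_1$, with covariance $\bar\sigma\cdot(\sigma-\bar\sigma)/N\neq 0$), so the joint route is not merely ``cleanest'' but necessary.

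The genuine gap is the treatment of the ratio, which you defer to ``a concentration estimate for the cavity denominator'' via Brascamp--Lieb. This does not close the argument. The map $\sigma\mapsto e^{u(S_1(\sigma)+z_1)}$ has Lipschitz constant of order $\|u'e^u\|_\infty\|g_1\|/\sqrt N=O(1)$, so Brascamp--Lieb only gives an $O(1)$ variance bound: the denominator $\langle e^{u(S_1+z_1)}\rangle_c$ does \emph{not} concentrate to a constant but converges to the genuinely random quantity $\e_\xi e^{u(\theta)}$, a function of $\tilde z$. That limit can be arbitrarily close to $0$ (e.g.\ for quadratic $u$ it behaves like $e^{-c\tilde z^2}$), so ``ratio of limits equals limit of the ratio'' is not a continuous-mapping statement for bounded continuous test functions, and Lemma~\ref{lem:gaussian_approx} cannot be applied to the ratio directly. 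The paper's device is to expand the inverse denominator as a geometric series, $\langle e^{u(a_1)}\rangle_c^{-1}=\sum_{k\geq1}(1-\langle e^{u(a_1)}\rangle_c)^{k-1}$ (valid since $u\leq0$), truncate at level $K$, apply Lemma~\ref{lem:gaussian_approx} to each term --- which is a bounded continuous function of finitely many replicas $a_1,\dots,a_k$ --- and control the remainder by Cauchy--Schwarz together with Lemma~\ref{lem:uk_mean} and dominated convergence, before resumming. Without this (or an equivalent uniform control of the event where the denominator is small, compatible with the unbounded numerator $|u'|\leq d(1+\sqrt{|u|})$), the final ``continuous-mapping argument'' in your proposal is not justified.
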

\noindent
    \emph{Proof. } Let $(\theta_\ell)_{\ell\le n}$ as in \eqref{thetal}, $\theta=\theta_1$, $\xi=\xi_1$ and $a_\ell:=S_1(\sigma^\ell)+z_1$. Let $K\geq1$. For each $k\le K$ define {(we let $\prod_{j=2}^1 (\cdots) =1$)}
    \begin{equation*}
        f_{k}(a_1,\dots,a_k) := ({u'}(a_1)^2 + {u''}(a_1)) \exp{u(a_1)} \prod_{j=2}^{k} (1 - \exp{u(a_j)}).
    \end{equation*}
      By equation \eqref{eq:cavity_mean_N}, the overlap definition \eqref{overlapsQ}, the spin symmetry and the geometric series (recall $u(s)\le 0$) we have that
    \begin{align}
        \e\langle{Q_{1,1}}\rangle &= \alpha \e\langle{u'}(a_1)^2 + {u''}(a_1)\rangle  \nonumber\\
        &=\alpha \e\frac{\langle({u'}(a_1)^2 + {u''}(a_1))\exp u(a_1)\rangle_c}{\langle\exp u(a_1)\rangle_c}  \nonumber\\
        &= \alpha \sum_{k\le K} \e \langle f_k(a_1,\dots,a_k) \rangle_c + \alpha \e  \big(\langle {u'}(a_1)^2 + {u''}(a_1) \rangle(1 -\langle \exp{u(a_1)} \rangle_c )^{{K}}\big).\label{eq:geom_series}
    \end{align}
    Note that by Assumption~\ref{ass:growth_u} the first term is the mean of a continuous and bounded function of $a_1,\dots,a_K$. Then by Lemma~\ref{lem:gaussian_approx} we have that
    \begin{equation*}
        \sum_{k\leq K} \e \langle f_k(a_1,\dots,a_k) \rangle_c \xrightarrow{m\to+\infty} \sum_{k\leq K} \e f_k(\theta_1,\dots,\theta_k).
    \end{equation*}
    The second term can be bounded in the following way
    \begin{align*}
            &\Big|\e \left( \langle {u'}(a_1)^2 + {u''}(a_1) \rangle(1 -\langle \exp{u(a_1)} \rangle_c )^K\right)\Big| \\
            &\qquad \leq \e \left(\langle d +d^2 (1+ {\sqrt{|u(a_1)|}})^2 \rangle(1 -\langle \exp{u(a_1)} \rangle_c )^{{K}}\right)\\
            & \qquad\leq \sqrt{\e \langle [d +d^2 (1+ {\sqrt{|u(a_1)|}})^2]^2\rangle \e \langle (1 - \exp{u(a_1)} )^{{2K}}\rangle_c }\\
            & \qquad\leq \sqrt{C\, \e \langle (1 - \exp{u(a_1)} )^{{2K}}\rangle_c},
    \end{align*}
    where in the first line we used Assumption \ref{ass:growth_u}, in the second Cauchy-Schwarz and Jensen's inequalities, and in the last one that by Lemma \ref{lem:uk_mean} there exists a constant $C > 0$ that bounds the first factor in the square root. Now, because $(1 - \exp{u(a_1)} )^{{2K}}$ is a continuous and bounded function of $a_1$, by Lemma~\ref{lem:gaussian_approx} we have that
    \begin{equation*}
        \e \langle (1 - \exp{u(a_1)} )^{2K}\rangle_c \xrightarrow{m\to+\infty} \e  (1 -\exp{u(\theta)} )^{2K}.
    \end{equation*}
    
    By taking the limit $m\to+\infty$ in equation \eqref{eq:geom_series} and using the convergence of the overlap $Q_{1,1}$ along the subsequence $(N_m)_{m\ge 1}$, this implies that
    \begin{equation*}
        \bar r = \alpha \sum_{k\leq K} \e f_k(\theta_1,\dots,\theta_k) + \mathcal{O}\Big(\sqrt{\e  (1 -\exp{u(\theta)} )^{2K}}\Big).
    \end{equation*}
    Consider the two terms in $f_k$ separately. Taking the limit $K\to+\infty$, by the monotone convergence theorem,
    \begin{align*}
         \sum_{k\leq K} \e {u'}(\theta)^2 \exp{u(\theta)} \prod_{j=2}^k(1 - \exp{u(\theta_j)})  &=  \e_{\tilde z}\,\Big[ \e_{\xi}{u'}(\theta)^2 \exp{u(\theta)} \sum_{k\leq K} (1 - \e_{\xi} \exp{u(\theta)})^{k-1}\Big] \\
         &\qquad\xrightarrow{K\to+\infty}  \e_{\tilde z}\, \frac{\e_\xi {u'}(\theta)^2 \exp u(\theta) }{\e_\xi  \exp u(\theta) },
    \end{align*}
    and since $|u''(a_1)| \leq d$ by Assumption~\ref{ass:growth_u}, by the dominated convergence theorem, we also have
    \begin{equation*}
        \sum_{k\leq K} \e {u''}(\theta) \exp{u(\theta)} \prod_{j=2}^k (1 - \exp{u(\theta_j)}) \xrightarrow{K\to+\infty} \e_{\tilde z}\, \frac{\e_\xi {u''}(\theta) \exp u(\theta)}{\e_\xi  \exp u(\theta)}.
    \end{equation*}
    To see that the second term goes to $0$, note that $1 - \exp{u(\theta)} < 1$. The limit then follows by the dominated convergence theorem. This proves $\bar r = \bar \Phi(q,\rho)$, i.e., equation \eqref{CPeqRb}.
    
    {The proof that $r = \Phi(q,\rho)$ (equation \eqref{CPeqR}) follows in a similar way. Let $K\geq1$. For each $k\le K$ define
    \begin{equation*}
       g_{k}(a_1,\dots,a_{2k}) := {u'}(a_1) {u'}(a_2) \exp(u(a_1)u(a_2)) \prod_{j=2}^{k} \big(1 - \exp(u(a_{2j-1})+u(a_{2j}))\big).
    \end{equation*}
    By Assumption~\ref{ass:growth_u}, all these are continuous and bounded functions of $a_1,\dots,a_{2k}$. As before, by the cavity formula and the geometric series we have that
    \begin{equation*}
        \e\langle{Q_{1,2}}\rangle = \alpha \sum_{k\le K} \e \langle g_k(a_1,\dots,a_{2k}) \rangle_c + \alpha \e  \big(\langle {u'}(a_1){u'}(a_2) \rangle(1 -\langle \exp{(u(a_1)+u(a_2))} \rangle_c )^{{K}}\big).
    \end{equation*}
    Finally, the conclusion follows in a similar manner as for the first equation: Lemma \ref{lem:gaussian_approx} is used for the sum while the second term can be seen to go to $0$ when $K\to+\infty$.}
    \begin{flushright}$\square$\end{flushright}


To establish the second pair of fixed point equations, a feasible approach would be to perform a cavity argument over the variables indexed by $i=1,\ldots,N$ analogous to the proof of Proposition \ref{prop:hat_fix_point} (this ``cavity in $N$'' is in general more involved than the ``cavity in $M$''). Nevertheless, by the virtue of concentration of the overlaps, we show that they can also be obtained through an elementary derivation via gaussian integration by parts. 

\begin{proposition}\label{prop:til_fix_point}
 The vector $(\rho,q,\bar r,r)$ satisfies \eqref{CPeqQ} and \eqref{CPeqRho}.
\end{proposition}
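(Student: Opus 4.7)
The plan is to combine two kinds of integration by parts: a coordinate-wise IBP in each $\sigma_i$, whose boundary terms vanish thanks to the strict log-concavity ensured by $-\kappa\|\sigma\|^2$; and a Gaussian IBP in the disorder $g_{ki}$. These tools, together with the overlap $L^2$-concentration from Section~\ref{sec:4}, will yield two linear relations between $\rho$ and $q$ whose unique solution gives exactly \eqref{CPeqQ} and \eqref{CPeqRho}. The starting pointwise identities are $\la \partial_{\sigma_i} H_N\ra = 0$ and $\la \sigma_i \partial_{\sigma_i} H_N\ra = -1$, obtained by integrating $1$ and $\sigma_i$ against $\partial_{\sigma_i}(e^{H_N})$.

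Since $\partial_{\sigma_i} H_N = -2\kappa \sigma_i + h + N^{-1/2}\sum_k g_{ki} A_k$, Gaussian IBP in $g_{ki}$ always produces terms of the form $\la (A_k^2+B_k)\sigma_i\ra$ together with products of Gibbs means, which become multi-replica correlators $\la A_k^1 A_k^2 \sigma_i^\ell\cdots\ra$ after introducing independent replicas. Summing over $k$ collects the $A,B$ factors into the conjugate overlaps $Q_{1,1}$ or $Q_{1,2}$, and summing over $i$ then collects the $\sigma$ factors into either $N R_{\cdot,\cdot}$ or $N m_N$, with $m_N := N^{-1}\sum_i \mathbb{E}\la \sigma_i\ra$. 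Taking $\mathbb{E}$ of the first identity, summing over $i$, and using $L^2$-concentration of the conjugate overlaps to pass to the limit yields $(2\kappa + r - \bar r)\, m = h$, so $m := \lim m_N = h/\beta$ with $\beta := 2\kappa + r - \bar r$. Multiplying the first identity by $\sigma_i$ before averaging introduces a third replica, and by replica symmetry $R_{2,2}\sim R_{1,1}$, $R_{2,3}\sim R_{1,2}$, leads in the limit to
\begin{equation*}
    (\beta + r)\, q = h m + r \rho.
\end{equation*}
The analogous treatment of the second identity gives
\begin{equation*}
    (\beta - r)\, \rho + r q = 1 + h m.
\end{equation*}
Eliminating $hm$ between these two equations yields $\beta(\rho - q) = 1$, i.e.\ $\rho - q = 1/(2\kappa + r - \bar r)$, while substituting $m\beta = h$ into the first gives $\beta^2 q = h^2 + r$. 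Together these are precisely \eqref{CPeqQ} and \eqref{CPeqRho}.

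The main obstacle is controlling the $o(1)$ residuals incurred when replacing $Q_{1,1}, Q_{1,2}$ by their limits $\bar r, r$ inside Gibbs expectations of the form $\mathbb{E}\la (Q_{\cdot,\cdot} - \textnormal{lim})\, F(\vec\sigma)\ra$, where $F$ is at worst a degree-two polynomial in $\sigma$ summed over $N$ coordinates, so that $\mathbb{E}\la F^2\ra = \mathcal{O}(N^2)$. Cauchy--Schwarz bounds each such residual by $\sqrt{\mathbb{E}\la (Q - \textnormal{lim})^2\ra}\cdot\sqrt{\mathbb{E}\la F^2\ra}$: the first factor is $o(1)$ by Proposition~\ref{ThmOConc} combined with the subsequential convergence of $\mathbb{E}\la Q\ra$, while the second is $\mathcal{O}(N)$ by the moment estimates of Lemma~\ref{lem3}. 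Each residual therefore contributes $o(N)$, which vanishes after dividing by $N$.
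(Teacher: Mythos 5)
Your argument is correct and coincides with the paper's proof: both exploit the quadratic term $-\kappa\|\sigma\|^2$ to integrate by parts in the spin coordinates, then apply Gaussian integration by parts in the disorder $g_{ki}$, and invoke the overlap concentration of Proposition~\ref{ThmOConc} to factorize the resulting correlators into a limiting linear system that reduces, by the same elimination, to \eqref{CPeqQ} and \eqref{CPeqRho}. The only cosmetic differences are that the paper works with the single coordinate $\sigma_1$ and exchangeability rather than summing over $i$ (your summed version actually makes the Cauchy--Schwarz error control slightly cleaner), and your sign for the magnetization, $\lim_N \E\la\sigma_1\ra = h/(2\kappa+r-\bar r)$ versus the paper's $-h/(2\kappa+r-\bar r)$, reflects an opposite convention for the external-field term that is immaterial since only $h\,\E\la\sigma_1\ra$ squared-type quantities, i.e.\ $h^2/(2\kappa+r-\bar r)$, enter the final relations.
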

\noindent
    \emph{Proof.} To see this, note that by the term $-\kappa (\sigma_1^1)^2$ in the Hamiltonian \eqref{HamHN} we can use gaussian integration by parts with respect to $\sigma_1^1$ and the gaussian variables $(g_{k1})_{k\leq M}$. In particular
    \begin{align*}
     \e\langle f((\sigma^\ell),g_{ki}) g_{ki}\rangle=\e\langle  \partial_{g_{ki}}f\rangle+\sum_{\ell}\big(\e\langle  f\partial_{g_{ki}}H_N(\sigma^\ell) \rangle-\e\langle  f\rangle \langle \partial_{g_{ki}} H_N(\sigma^\ell) \rangle\big),
    \end{align*}
    and, denoting the modified Hamiltonian $\hat H_N(\sigma):= \sum_{k\leq M} u(S_k(\sigma)+z_k) - h \sum_{i\leq N} \sigma_i$, 
    \begin{align*}
     2\kappa\e\langle f(\sigma_{i}) \sigma_{i}\rangle=\e\langle  \partial_{\sigma_{i}}f\rangle+\e\langle  f \partial_{\sigma_{i}}\hat H_N(\sigma)\rangle.
    \end{align*}
    These identities imply (recall the definitions \eqref{defs:AB})    
    \begin{align*}
        2 \kappa \e\langle{\sigma_1}\rangle &= \e\Big\langle{\frac1{\sqrt{N}}\sum_{k\leq M} A_k g_{k1}}-h\Big\rangle \\
        &= \e\Big\langle\frac{\sigma_1^1}{N}\sum_{k\leq M} \big((A_k^1)^2 + B_k^1 - A_k^1 A_k^2 \big)-h\Big\rangle,\\
        2 \kappa \e\langle(\sigma_1)^2\rangle & = \e\Big\langle 1{+\sigma_1\Big(\frac1{\sqrt{N}} \sum_{k\leq M} A_k g_{k1}-h \Big)}\Big\rangle\\
        & = \e\Big\langle1+\frac{(\sigma_1^1)^2}{N}\sum_{k\leq M} \big((A_k^1)^2 + B_k^1\big) -\frac{\sigma_1^1\sigma_1^2}{N}\sum_{k\leq M} A_k^1 A_k^2 - h\sigma_1\Big\rangle,\nonumber\\
        2 \kappa \e\langle\sigma^1_1\sigma^2_1\rangle & = \e\Big\langle\sigma_1^2{\Big(\frac1{\sqrt{N}}\sum_{k\leq M} A^1_k g_{k1}-h\Big)}\Big\rangle\\
        & = \e\Big\langle \frac{\sigma_1^1\sigma_1^2}{N}\sum_{k\leq M} \big((A_k^1)^2{+B_k^1}\big)+\frac{(\sigma_1^2)^2}{N}\sum_{k\leq M} A_k^1 A_k^2- 2\frac{\sigma_1^2\sigma_1^3}{N}\sum_{k\leq M} A_k^1 A_k^3  - h\sigma_1^2\Big\rangle.
    \end{align*}
    The concentrations of $R_{1,1}$, $R_{1,2}$, $Q_{1,1}$, and $Q_{1,2}$ (see Proposition~\ref{ThmOConc}) then implies that
    {\begin{align*}
        \e\langle \sigma_1\rangle &= - \frac{h}{2 \kappa +r_{m}-\bar r_{m}} + o_m(1),\\
        2 \kappa \rho_{m} &= 1 + \rho_{m} \bar r_{m} - q_{m} r_{m} + \frac{h^2}{2\kappa + r_{m} - \bar r_{m}} + o_m(1),\\
        2 \kappa q_{m} &= - q_{m} (r_{m} - \bar r_{m}) + (\rho_{m}-q_{m}) r_{m}+ \frac{h^2}{2\kappa + r_{m} - \bar r_{m}} + o_m(1).
    \end{align*}
    Sending $m\to+\infty$, these lead to $q = \Psi(r,\bar r)$ and $\rho= \bar \Psi(r,\bar r)$ (equations \eqref{CPeqQ} and \eqref{CPeqRho}) and complete our proof.
    \begin{flushright}$\square$\end{flushright}

Propositions \ref{prop:hat_fix_point} and \ref{prop:til_fix_point} together conclude that $(\rho,q,\bar r,r)$ satisfies \eqref{CPeqQ}--\eqref{CPeqRb}.  We are now in position to prove Theorem \ref{thm:conv_to_solution} for the ST spin glass model.

\subsection{Proof of Theorem~\ref{thm:conv_to_solution}.}  Consider any subsequence of system sizes. As explained above Lemma~\ref{lem:gaussian_approx}, there exists some further subsequence $(N_{m})_{n\geq1}$ such that
    \begin{equation*}
        (\rho_{N_{m}},q_{N_{m}},\bar r_{N_{m}},r_{N_{m}}) \xrightarrow{m\to+\infty}(\rho,q,\bar r,r)
    \end{equation*}
    for some $(\rho,q,\bar r,r)$ in a compact set $K$. By Propositions \ref{prop:hat_fix_point} and \ref{prop:til_fix_point} and the continuity of the functions $\bar \Phi$, $\Phi$, $\bar \Psi$, and $\Psi$, we see that $(\rho,q,\bar r,r)$ is a solution of \eqref{CPeqQ}--\eqref{CPeqRb}. Since this system of equations has only one unique solution as ensured by the given assumption, it follows that any convergent subsequence will share the same limit and this implies the assertion.

\section{Proof of Theorem \ref{ThmMain}} \label{sec:freeenergy}
In this section, we establish the convergence of $F_N$ using a basic interpolation in this section. Let $\alpha,\Delta_*,\Delta,\kappa > 0$ be fixed.  For $t\in[0,1]$, consider an auxiliary free energy
\begin{equation*}
    \phi_N(t) := \frac{1}{N}\e\ln \int  \exp\Big(\sum_{k\leq M} u\big(\sqrt{t}((\bar G \sigma)_k +  z_k)\big) - h \sum_{i\leq N} \sigma_i - \kappa \|\sigma\|^2\Big)\, d\sigma. 
\end{equation*}
Let $\e\la \,\cdot\,\ra_t$ be the joint mean with respect to the gaussian $(G,z)$ and Gibbs measure associated with the Hamiltonian \eqref{HamHN} after replacing $u(x)$ by $u(\sqrt{t}x)$. We need the following crucial lemma.

\begin{lemma}\label{lem:add1}
For any $t_0\in (0,1],$ there exist constants $K>0$ and $\delta>0$ such that for all $t\in [t_0-\delta,t_0+\delta]\cap(0,1]$ and $N\geq 1$ we have
\begin{align*}
\max\Bigl(	\Bigl|\frac{d}{dt}\e \la R_{1,1}\ra_t\Big|,	\Bigl|\frac{d}{dt}\e \la R_{1,2}\ra_t\Big|,	\Bigl|\frac{d}{dt}\e \la Q_{1,1}\ra_t\Big|,	\Bigl|\frac{d}{dt}\e \la Q_{1,2}\ra_t\Big|\Bigr)\leq K.
\end{align*} 

\end{lemma}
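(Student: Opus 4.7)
The plan is to write each of the four derivatives as a Gibbs covariance and then bound it via Cauchy--Schwarz combined with the Brascamp--Lieb inequality. Let $\mathcal H_t(\vec\sigma):=\sum_{\ell\le n}\big(\sum_{k\le M}u(\sqrt{t}(S_k(\sigma^\ell)+z_k))-h\sum_{i\le N}\sigma^\ell_i-\kappa\|\sigma^\ell\|^2\big)$ denote the interpolated replicated Hamiltonian (with $n=1$ for $R_{1,1},Q_{1,1}$ and $n=2$ for $R_{1,2},Q_{1,2}$). For any bounded $f$,
\begin{equation*}
\frac{d}{dt}\mathbb{E}\langle f\rangle_t = \mathbb{E}\big\langle (f-\langle f\rangle_t)(\partial_t \mathcal H_t - \langle \partial_t \mathcal H_t\rangle_t)\big\rangle_t,
\end{equation*}
with
\begin{equation*}
\partial_t \mathcal H_t(\vec\sigma) = \frac{1}{2\sqrt{t}}\sum_{\ell\le n}\sum_{k\le M} u'\big(\sqrt{t}(S_k(\sigma^\ell)+z_k)\big)(S_k(\sigma^\ell)+z_k).
\end{equation*}
The hypothesis $t_0>0$ is used precisely here: the factor $1/\sqrt{t}$ is uniformly bounded on $[t_0-\delta,t_0+\delta]$ as soon as $\delta<t_0$. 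Two applications of Cauchy--Schwarz (first in the Gibbs average, then in $\mathbb{E}$) reduce the task to separately bounding $\mathbb{E}\langle(f-\langle f\rangle_t)^2\rangle_t$ and $\mathbb{E}\langle(\partial_t \mathcal H_t-\langle\partial_t \mathcal H_t\rangle_t)^2\rangle_t$.

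The replicated Hamiltonian $\mathcal H_t$ is $2\kappa$-strongly concave for every $t\in(0,1]$, since $u(\sqrt{t}\,\cdot\,)$ remains concave and each $-\kappa\|\sigma^\ell\|^2$ contributes $-2\kappa\,\mathrm{Id}$ to the Hessian. The Brascamp--Lieb inequality thus gives $\langle(g-\langle g\rangle_t)^2\rangle_t\le (2\kappa)^{-1}\langle\|\nabla g\|^2\rangle_t$ for any smooth $g$ on $\mathbb{R}^{Nn}$. For each $f\in\{R_{1,1},R_{1,2},Q_{1,1},Q_{1,2}\}$, a direct computation of $\|\nabla f\|^2$ together with $\|u^{(j)}\|_\infty\le d$ for $j\le 3$ (Assumption \ref{ass:growth_u}), Lemma \ref{lem2}, and the uniform moment bounds $\mathbb{E}\|\bar G\|_{op}^m=O(1)$ (proof of Lemma \ref{lem3}, via \eqref{Gnormconc}) yield $\mathbb{E}\langle(f-\langle f\rangle_t)^2\rangle_t=O(1/N)$; for instance $\|\nabla R_{1,2}\|^2\le (\|\sigma^1\|^2+\|\sigma^2\|^2)/N^2$, while for $Q_{1,2}$ one arrives at $\|\nabla Q_{1,2}\|^2 \le C\|\bar G\|_{op}^2\,\|c\|^2/N^2$ with $c_k^2$ controlled by $(1+|S_k+z_k|)^2$. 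Symmetrically, using $|u'(s)|\le d(1+|s|)$ and $\|u''\|_\infty\le d$, one obtains $\|\nabla \partial_t \mathcal H_t\|^2\le (C/t_0)\|\bar G\|_{op}^2\sum_{\ell\le n}\sum_{k\le M}\big(1+(S_k(\sigma^\ell)+z_k)^2\big)$, and the same moment bounds deliver $\mathbb{E}\langle(\partial_t \mathcal H_t-\langle\partial_t \mathcal H_t\rangle_t)^2\rangle_t=O(N/t_0)$. Multiplying the two square roots produces the desired $O(1)$ bound, uniform in $N$ and in $t\in[t_0-\delta,t_0+\delta]$.

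The main obstacle will be the careful gradient computation for $Q_{1,1}$, $Q_{1,2}$, and for $\partial_t\mathcal H_t$: these gradients are sums over $k\le M$ of terms weighted by rows of $\bar G$ and involving the higher derivatives $u''$, $u'''$, and one has to combine the $L^\infty$ bounds on $u^{(j)}$, the linear growth of $u'$, and the $L^m$ moment controls on $\|\bar G\|_{op}$, $\|\bar z\|$ and $\|\sigma\|^2/N$ (Lemma \ref{lem2} and \eqref{Gnormconc}) to close the estimates. The resulting constants depend only on $(\alpha,\Delta_*,\kappa,h,d,t_0)$ and in particular not on $N$ or on the precise value of $t$ in the window.
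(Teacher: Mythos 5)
Your proposal is correct and follows essentially the same route as the paper: write each derivative as a Gibbs covariance between the overlap and $\partial_t$ of the interpolated Hamiltonian (the paper writes this with replicas and the normalized quantity $L(\sigma)=\tfrac{2\sqrt t}{N}\partial_t H_t(\sigma)$), then apply Cauchy--Schwarz and bound each factor's Gibbs variance via Brascamp--Lieb for the $2\kappa$-strongly log-concave measure together with the moment bounds of Lemma~\ref{lem2} and \eqref{Gnormconc}. The arithmetic $\sqrt{O(1/N)}\cdot\sqrt{O(N/t_0)}=O(t_0^{-1/2})$ matches the paper's $\tfrac{N}{2\sqrt t}\cdot O(1/\sqrt N)\cdot O(1/\sqrt N)$, and your explicit gradient computations are exactly what the paper's reference to Section~\ref{SecOverConc} delegates.
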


\begin{proof}
	We establish this inequality only for $\frac{d}{dt}\e \la R_{1,1}\ra_t$ as the others can be treated similarly. Let
	$$
	L(\sigma):=\frac{1}{N}\sum_{k\le M}\big(S_k(\sigma)+z_k\big)u'\big(\sqrt{t}(S_k(\sigma)+z_k)\big).
	$$
	Write
	\begin{align*}
		\frac{d}{dt}\mathbb{E}\langle R_{1,1}\rangle_t
		&=\frac{N}{2\sqrt{t}}\,\mathbb{E}\big\langle R_{1,1}(L(\sigma^1)-L(\sigma^2))\big\rangle_t\\
        &=\frac{N}{2\sqrt{t}}\,\mathbb{E}\big\langle (R_{1,1}-\langle R_{1,1}\rangle_t)(L(\sigma^1)-\langle L(\sigma^1)\rangle_t)\big\rangle_t.
	\end{align*}
	As in Section \ref{SecOverConc},
	$$
	\mathbb{E}\big\langle(R_{1,1}-\langle R_{1,1}\rangle_t)^2\big\rangle_t\leq \frac{K}{N} 
	\quad \mbox{and}\quad 
	\mathbb{E}\big\langle (L(\sigma^1)-\langle L(\sigma^1)\rangle_t)^2\big\rangle_t\leq \frac{K}{N},
	$$
	so, by the Cauchy-Schwarz inequality, 
	$
	\frac{d}{dt}\mathbb{E}\langle R_{1,1}\rangle_t\leq K.
	$
	Since this bound is valid locally uniformly in $t$, the assertion follows.
\end{proof}

    We now turn to the proof of Theorem \ref{ThmMain}. Let $(q,\rho,\bar r,r)$ be the unique solution to \eqref{CPeqQ}--\eqref{CPeqRb}. As explained in Section \ref{sec:3}, $F(q,\rho,r,\bar r)=F(q,\rho).$ Also, from Lemma \ref{lem:conc_fe}, $\lim_{N\to\infty}\mbox{Var}(F_N)=0.$ Our proof will be complete if we can show that all subsequential limits of $\e F_N$ are equal to $F(q,\rho,r,\bar r)$. We establish this statement as follows.
    
   First of all, from Lemma \ref{lem:add1}, we can use the Arzela-Ascoli theorem and a diagonalization process to further pass to a subsequence along which $\e\la R_{1,1}\ra_t$, $\e\la R_{1,2}\ra_t$, $\e\la Q_{1,1}\ra_t$ and $\e\la Q_{1,2}\ra_t$ converge to some $\rho_t,q_t,\bar r_t,r_t$ uniformly with respect to $t\in[0,1]$, respectively. For the rest of the proof, to lighten our notation, we shall simply assume that the convergences of these overlaps and $\e F_N$ are valid along the original sequence $N.$
   Here, $\rho_t,q_t,\bar r_t,r_t$ are defined on $t\in[0,1]$ and are locally Lipschitz on $(0,1].$ Furthermore, from Theorem~\ref{thm:conv_to_solution}, they satisfy \eqref{CPeqQ}--\eqref{CPeqRb} for $u(x)$ being replaced by $u(\sqrt{t}x)$ for all $t\in [0,1]$. 
    Now, from the uniqueness of the solution to the fixed point equations \eqref{CPeqQ}--\eqref{CPeqRb} (corresponding to $t=1$), 
   \begin{align}\label{add:eq1}
   	(\rho_1,q_1,\bar r_1,r_1)=(\rho,q,\bar r,r).
   \end{align}  When $t=0,$ 
    \begin{align}\label{add:eq2}
    \rho_0=\frac{1}{2\kappa}+\frac{h^2}{(2\kappa)^2},\quad q_0=\frac{h^2}{2\kappa},\quad  \bar{r}_0=0,\quad r_0=0.
    \end{align}
    Sending $t\downarrow 0$ in \eqref{CPeqQ}--\eqref{CPeqRb} also yields that $\lim_{t\downarrow 0}(\rho_t,q_t,\bar r_t,r_t)=(\rho_0,q_0,\bar r_0,r_0)$. Hence, $\rho_t,q_t,\bar r_t,r_t$ are continuous on $[0,1]$.

   Next, set
    \begin{equation*}
    	{f(t) := \alpha\e \ln\e_\xi \exp{u(\sqrt{t}\theta_t)}  +\frac12\Big(\frac{r_t+h^2}{2\kappa + r_t-\bar r_t} - \ln(2\kappa +r_t-\bar r_t) +  r_t q_t-\bar r_t \rho_t+\ln2\pi\Big),}
    \end{equation*}
    where $\theta_t := \tilde z \sqrt{\Delta_*+q_t}+\xi\sqrt{\rho_t-q_t}$ with i.i.d. standard gaussian $\tilde z, \xi$. Because the fixed point equations satisfy the critical point conditions
    \begin{equation*}
    	\frac{\partial f}{\partial \rho_t} =  \frac{\partial f}{\partial q_t} = \frac{\partial f}{\partial\bar r_t} = \frac{\partial f}{\partial r_t} = 0
    \end{equation*}
and $\rho_t,q_t,\bar r_t,r_t$ are locally Lipschitz on $(0,1),$ we conclude  that $df/dt = \partial f/ \partial t$. To compute this partial derivative, let $\theta_t' := \tilde z \sqrt{\Delta_*+q_t}+\xi'\sqrt{\rho_t-q_t}$ for $\xi'$ an independent copy of $\xi$. Using gaussian integration by parts with respect to $\tilde z$ and $\xi$ yields
\begin{equation*}
	f'(t) = \frac{\partial f}{\partial t} = \frac{\alpha}{2\sqrt{t}}\, \e_{\tilde z}\, \frac{\e_\xi u'(\sqrt{t}\theta_t)\exp u(\sqrt{t}\theta_t)\theta_t}{\e_{\xi'} u'(\sqrt{t}\theta_t')\exp u(\sqrt{t}\theta_t')}= {\frac{1}{2}} (\bar r_t (\rho_t+\Delta_*) - r_t (q_t+\Delta_*)) .
\end{equation*}
 On the other hand, note that
 \begin{equation}\label{eq:conv_deriv}
 	\phi_N'(t) = {\frac{1}{2}} \e\big\langle Q_{1,1} (R_{1,1}+\Delta_*) - Q_{1,2} (R_{1,2}+\Delta_*) \big\rangle_t,
 \end{equation}
 where we used gaussian integration by parts with respect to the elements of $\bar G$ and $z$ similar to the proof of Proposition~\ref{prop:til_fix_point}. 
 By Theorem \ref{thm:conv_to_solution}, for every $t\in[0,1]$, $(R_{1,1},R_{1,2},Q_{1,1},Q_{1,2})$ converges to $(\rho_t,q_t,\bar r_t,r_t)$ under $\e\la \,\cdot\,\ra_t$
and, hence,
 $$
\lim_{N\to+\infty} \phi_N'(t)= \frac{1}{2} (\bar r_t (\rho_t+\Delta_*) - r_t (q_t+\Delta_*)).
 $$

 To complete the proof, note that $\phi_N(1)=\e F_N$ and that from \eqref{add:eq1} and \eqref{add:eq2},
    \begin{align*}
        f(0)& = \alpha u(0)+\frac{h^2}{4\kappa} + \frac{1}{2}\ln\frac{\pi}{\kappa}=\phi_N(0),\\
        f(1)&=F(q,\rho,\bar r,r).
    \end{align*}
     Consequently, from \eqref{eq:conv_deriv} and noting that $\e\la R_{1,1}\ra_t,$ $\e\la R_{1,2}\ra_t,$ $\e \la Q_{1,1}\ra_t,$ and $\e\la Q_{1,2}\ra_t$ are uniformly bounded on $[0,1]$ due to Lemma \ref{lem3}, the dominated convergence theorem implies
     \begin{align*}
	|\e F_N-F(q,\rho,r,\bar r)|=|\phi_N(1)-f(1)|\leq \int_0^1 |\phi'_N(t)-f'(t)|dt\to 0,
    \end{align*}
completing our proof. \begin{flushright}$\square$\end{flushright}

\section{Proof of Theorem \ref{add:thm1}} \label{sec:7}
We now provide the proof of the main theorem for the regression task, from the results obtained for the ST model. Throughout the entire section, the assumptions in Theorem~\ref{add:thm1} are in force. Recall $h_N:=2\kappa \sqrt{\gamma_N}$ and $h:=2\kappa \sqrt{\gamma}.$ From the convergence \eqref{convNorm} we have $\e|h_N-h|^2\to 0$. Throughout this section, we denote by $F_N(h),F_N(h_N)$ and $\la\, \cdot\,\ra_{h},\la\, \cdot\,\ra_{h_N}$ the free energies $F_N$ and Gibbs expectations $\la\, \cdot\,\ra$, defined by \eqref{add:eq-2}, \eqref{gibbsExpec}, associated to the deterministic $h$ and randomized $h_N,$ respectively. In particular $\la\, \cdot\,\ra_{h}$ is the same as the plain $\la\, \cdot\,\ra$, while when defining $F_N(h_N)$ and $\la\, \cdot\,\ra_{h_N}$ the constant $h$ is replaced in the Hamiltonian \eqref{HamHN} by the random $h_N$. In the latter case, we also assume that $h_N$ is independent of all other randomness. First of all, we establish the following lemma:

\begin{lemma}[Controlling the randomness $h_N$] The free energy and overlap for the ST model with a random $h_N$ and fixed $h$ are asymptotically the same:
\begin{align}\label{add:eq-6}
	\lim_{N\to\infty}\e\bigl|F_N(h_N)-F_N(h)\bigr|=0
\end{align}
and
\begin{align}\label{add:eq-7}
	\lim_{N\to\infty}\e\la R_{1,2} \ra_{h_N}=\lim_{N\to\infty}\e\la R_{1,2}\ra_h=q.
\end{align}
\end{lemma}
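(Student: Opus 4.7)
The plan is to treat the two claims separately, exploiting that $h$ enters the Hamiltonian \eqref{HamHN} linearly through $h\sum_{i\le N}\sigma_i$. This makes $F_N$ convex in $h$, and it forces the sensitivities of both $F_N$ and of the overlap $R_{1,2}$ with respect to $h$ to be controlled by moment estimates on $R_{1,1}$ already available from Section~\ref{sec:4}. Throughout, I will use that $h_N$ is independent of $(G,z)$, so that after conditioning on $h_N$ every statement reduces to the deterministic-field setting of the earlier sections.

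For \eqref{add:eq-6}, convexity of $\tilde h\mapsto F_N(\tilde h)$ yields
\begin{equation*}
|F_N(h_N)-F_N(h)|\le |h_N-h|\bigl(|F_N'(h)|+|F_N'(h_N)|\bigr).
\end{equation*}
Since $F_N'(\tilde h)$ equals, up to sign, $N^{-1}\la\sum_i\sigma_i\ra_{\tilde h}$, Cauchy--Schwarz gives $(F_N'(\tilde h))^2\le \la R_{1,1}\ra_{\tilde h}$, whose expectation is bounded by $C(1+\tilde h^2)$ via Lemma~\ref{lem3}. Taking expectations, applying Cauchy--Schwarz once more, and conditioning on $h_N$ in the $h_N$-dependent term yields
\begin{equation*}
\E|F_N(h_N)-F_N(h)|\le C'\,\sqrt{\E|h_N-h|^2}\,\sqrt{1+\E h_N^2},
\end{equation*}
which tends to $0$ since $\E h_N^2\le C$ by \eqref{convNorm}.

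For \eqref{add:eq-7}, the equality $\lim_N\E\la R_{1,2}\ra_h=q$ is Theorem~\ref{thm:conv_to_solution}, so it only remains to show $\E\la R_{1,2}\ra_{h_N}-\E\la R_{1,2}\ra_h\to 0$. Differentiating the two-replica Gibbs mean in $\tilde h$, which enters as a field shared by both replicas, gives up to sign
\begin{equation*}
\partial_{\tilde h}\la R_{1,2}\ra_{\tilde h} = 2N\,\Cov_{\tilde h}\bigl(R_{1,2},\,N^{-1}\sum_i\sigma_i^1\bigr).
\end{equation*}
The two-replica Hamiltonian is strongly log-concave under Assumption~\ref{ass:growth_u}, by the same argument as in Lemma~\ref{lem:convex} with $\lambda=\eta=0$, so the Brascamp--Lieb inequality gives $\Var_{\tilde h}(N^{-1}\sum_i\sigma_i^1)\le C/N$ and $\Var_{\tilde h}(R_{1,2})\le C\la R_{1,1}\ra_{\tilde h}/N$, the latter because $\|\nabla R_{1,2}\|^2=(\|\sigma^1\|^2+\|\sigma^2\|^2)/N^2$. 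A single Cauchy--Schwarz on the covariance then yields the uniform sensitivity bound $|\partial_{\tilde h}\la R_{1,2}\ra_{\tilde h}|\le K\sqrt{\la R_{1,1}\ra_{\tilde h}}$. Integrating along the segment between $h$ and $h_N$ conditionally on $h_N$, taking expectations, and applying Cauchy--Schwarz against $\sqrt{\E|h_N-h|^2}$, with the polynomial growth in $\tilde h$ absorbed by Lemma~\ref{lem3} and $\E h_N^2\le C$, completes the argument.

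The main obstacle is keeping the various pre-factors uniform over the random interval $[h\wedge h_N,\,h\vee h_N]$, because the sensitivities above grow polynomially in $|\tilde h|$ through $\la R_{1,1}\ra_{\tilde h}$. This is handled without additional difficulty: all $\tilde h$-dependence is polynomial, and combining Lemma~\ref{lem3} with the moment bound $\E h_N^2\le C$ from \eqref{convNorm} controls these factors in expectation, so that a final Cauchy--Schwarz against $\sqrt{\E|h_N-h|^2}\to 0$ closes every estimate.
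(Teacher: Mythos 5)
Your argument is correct and follows essentially the same route as the paper: both proofs control the sensitivity of $F_N$ and of $\la R_{1,2}\ra$ to the field parameter (the derivative of the free energy via a magnetization moment, the derivative of the overlap via a Gibbs covariance controlled by Brascamp--Lieb) and then close with Cauchy--Schwarz against $\sqrt{\e|h_N-h|^2}\to 0$. The differences are cosmetic: the paper interpolates along $th_N+(1-t)h$ and uses the mean value theorem together with a fourth-moment H\"older bound, applying Brascamp--Lieb to the linear functionals $w\cdot\sigma/N$, whereas you use convexity endpoints for the free energy and apply Brascamp--Lieb directly to $R_{1,2}$ and $M_1$ -- both yield the same $O(|h_N-h|)$ estimate, with the uniform-in-$\tilde h$ moment control you flag being exactly the paper's bound \eqref{add:eq-10}.
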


\begin{proof}
For $0\leq t\leq 1,$ consider the interpolated free energy,
$$
F_{N,t}:=\frac{1}{N}\ln\int \exp\Bigl(\sum_{k\le M}u\big(S_k(\sigma)+z_k\big)-\big(th_N+(1-t)h\big)\sum_{i\le N}\sigma_i-\kappa \|\sigma\|^2\Bigr)\,d\sigma.
$$
Let $H_{N,t}(\sigma)$ be the Hamiltonian defined by the above exponent. The associated Gibbs mean is denoted by $\la \,\cdot\,\ra_t$ and is defined by
\begin{align}
\big\la f((\sigma^\ell)_{\ell \le L})\big\ra_t:=\frac{1}{(Z_{N,t})^L}\int f((\sigma^\ell)_{\ell \le L})\prod_{\ell \le L}\exp H_{N,t}(\sigma^\ell)\,d\sigma^\ell
\end{align}
with $Z_{N,t}:=\int \exp H_{N,t}(\sigma)\,d\sigma$ the corresponding normalization. Clearly $F_{N,1}=F_N(h_N)$ and $F_{N,0}=F_N(h)$, as well as $\la R_{1,2}\ra_{t=1}=\la R_{1,2}\ra_{h_N}$ and $\la R_{1,2}\ra_{t=0}=\la R_{1,2}\ra_{h}$. Let $M_\ell:=N^{-1}\sum_{i\le N}\sigma_i^\ell.$ A direct differentiation gives that
\begin{align*}
	\frac{d}{dt}F_{N,t}=(h-h_N)\la M_1\ra_t
\end{align*}
and
\begin{align*}
	\frac{d}{dt}\la R_{1,2}\ra_{t}&=2N(h-h_N)\la R_{1,2}(M_1-M_3)\ra_t\\
	&=2N(h-h_N)\la (R_{1,2}-\la R_{1,2}\ra_t)(M_1-\la M_1\ra_t)\ra_t.
\end{align*}
From these, the mean value theorem and the H\"older inequality, there exists $t\in[0,1]$ such that
\begin{align}\label{add:eq-15}
\e \big|F_N(h_N)-F_N(h)\big|&\leq \bigl(\e |h_N-h|^2\bigr)^{1/2}\bigl(\e\la M_1^2\ra_t\bigr)^{1/2}\nonumber\\
&\leq  \bigl(\e |h_N-h|^2\bigr)^{1/2}\Bigl(\frac{\e \la \|\sigma^1\|^2\ra_t}{N}\Bigr)^{1/2}
\end{align}
and some $t\in[0,1]$ such that
\begin{align}\label{add:eq-8}
&\bigl|\e\la R_{1,2}\ra_{h_N}-\e \la R_{1,2}\ra_h\bigr|\nonumber\\
&\qquad\leq 2N\bigl(\e|h_N-h|^2\bigr)^{1/2}\Bigl(\e\bigl\la \bigl|R_{1,2}-\la R_{1,2}\ra_t\bigr|^4\bigr\ra_t\Bigr)^{1/4}
\Bigl(\e\bigl\la \bigl|M_{1}-\la M_{1}\ra_t\bigr|^4\bigr\ra_t\Bigr)^{1/4}.
\end{align}
The same proof as of Lemma \ref{lem2} yields that for any $k\geq 1$ there exists some $K>0$ such that for all $t\in [0,1]$ and $N\geq 1,$
\begin{align}\label{add:eq-10}
	\la \|\sigma\|^{2k}\ra_t&\leq KN^k\bigl(\|\bar z\|^{2k}+\|\bar G\|_{op}^{2k}+h^{2k}+h_N^{2k}+1\bigr).
\end{align}
Since the expectation of every term in the bracket is bounded by a universal constant, we obtain \eqref{add:eq-6} by using \eqref{add:eq-15} and taking $k=1.$

As for \eqref{add:eq-7}, note that for any fixed $w\in \mathbb{R}^N,$  $f(\sigma):=N^{-1}w\cdot \sigma$ is Lipschitz with Lipschitz constant $\|w\|/N$ and that the Hessian of the interpolated Hamiltonian $H_{N,t}(\sigma)$ in $F_{N,t}$ is bounded from above by $-\kappa {\rm Id}/2.$ From the Brascamp-Lieb inequality \cite[Theorem 5.1]{brascampleb}, 
\begin{align}\label{add:eq-9}
	\Bigl\la \Bigl|\frac{w\cdot\sigma^1}{N}-\frac{w\cdot \la \sigma^1\ra_t}{N}\Bigr|^4\Bigr\ra_t&\leq \frac{K\|w\|^4}{N^4},
\end{align}
where $K$ is a constant independent of $w,N,t.$ Therefore, letting $w=(1,\ldots, 1)$ implies
\begin{align}\label{add:eq-12}
	\bigl\la \bigl|M_1-\la M_1\ra_t\bigr|^4\bigr\ra_t&\leq \frac{K\|w\|^4}{N^4}=\frac{K}{N^2}.
\end{align}
In addition, writing
\begin{align*}
	R_{1,2}-\la R_{1,2}\ra_t&=\frac{\sigma^1\cdot\sigma^2}{N}-\frac{\la \sigma^1\ra_t\cdot\sigma^2}{N}+\frac{\la \sigma^1\ra_t\cdot\sigma^2}{N}-\frac{\la \sigma^1\ra_t\cdot \la \sigma^2\ra_t}{N},
\end{align*}
we can apply \eqref{add:eq-9} with the choices $w=\sigma^2$ and $w=\la \sigma^1\ra_t$ to get
\begin{align*}
	\Bigl\la \Bigl|\frac{\sigma^1\cdot\sigma^2}{N}-\frac{\la \sigma^1\ra_t\cdot \sigma^2}{N}\Bigr|^4\Bigr\ra_t&\leq \frac{K\la \|\sigma^2\|^4\ra_t}{N^4}= \frac{K\la \|\sigma^1\|^4\ra_t}{N^4}
\end{align*}
and
\begin{align*}
		\Bigl\la \Bigl|\frac{\la \sigma^1\ra_t\cdot\sigma^2}{N}-\frac{\la \sigma^1\ra_t\cdot \la \sigma^2\ra_t}{N}\Bigr|^4\Bigr\ra_t&\leq \frac{K \|\la\sigma^1\ra_t\|^4}{N^4}\leq \frac{K \la\|\sigma^1\|^4\ra_t}{N^4}.
\end{align*}
Putting these together and applying \eqref{add:eq-10} with $k=2$, we obtain that for all $0\leq t\leq 1$ and $N\geq 1,$
\begin{align}\label{add:eq-11}
	\e\bigl\la \bigl|R_{1,2}-\la R_{1,2}\ra_t\bigr|^4\bigr\ra_t&\leq \frac{32K\e\la\|\sigma^1\|^4\ra_t}{N^4}\leq \frac{K'}{N^2}
\end{align}
for some constant $K'$ independent of $t,N.$
Plugging \eqref{add:eq-12} and \eqref{add:eq-11} into \eqref{add:eq-8} validates \eqref{add:eq-7}.
\end{proof}

Now we proceed to establish \eqref{add:thm1:eq1} and \eqref{add:thm1:eq2}. From Theorem \ref{thm:conv_to_solution} and \eqref{add:eq-3}, 
\begin{align*}
	\lim_{N\to\infty}\e \la R_{1,2}\ra_h&=q
\end{align*}
and
\begin{align*}
\frac{1}{N}\e\|\la x\ra^\sim-x^*\|^2&=\frac{1}{N}\e \|\la \sigma\ra_{h_N}\|^2=\e\la R_{1,2}\ra_{h_N}.
\end{align*}
These and \eqref{add:eq-7} together yields \eqref{add:thm1:eq2}. 

As for \eqref{add:thm1:eq1}, from \eqref{add:eq-5.1}, we can write
\begin{align*}
	\tilde F_N&=-\kappa \gamma_N+F_N(x^*),
\end{align*}
where 
\begin{align*}
F_N(x^*):=\frac{1}{N}\ln\int_{\mathbb{R}^N}\exp \Big(\sum_{k\leq M}u\Big((\bar G O^\intercal\sigma)_k-z_k\Big)-h_N\sum_{i\le N}\sigma_i-\kappa \|\sigma\|^2\Big)\,d\sigma
\end{align*}
and $O$ is an orthonormal matrix satisfying $Ox^*=N^{-1/2}{\|x^*\|}\boldsymbol{1}_N.$ Write
\begin{align}
	\begin{split}\label{add:eq-13}
\tilde F_N-\e \tilde F_N&=-\kappa(\gamma_N-\e\gamma_N)\\
&\qquad+F_N(x^*)-\e [F_N(x^*)\mid x^*]\\
&\qquad+\e [F_N(x^*)\mid x^*]-\e  F_N(x^*).
\end{split}
\end{align}
Here the first term on the right-hand side vanishes in $L^2$ norm by the given assumption. For the second term, 
observe that conditionally on $x^*$, $F_N(x^*)$ is the same as $F_N(h_N)$ in distribution. 
Lemma \ref{lem:conc_fe} (with $n=1$ and $\lambda=\eta=0$) allows us to write that with probability one,
\begin{align*}
	\mbox{Var}(F_N(x^*)\mid x^*)=\mbox{Var}(F_N(h_N)\mid h_N)&\leq \frac{K(1+h_N^{4})}{N},
\end{align*}
where $K$ is independent of $N.$ Hence, we can bound the second term by
\begin{align*}
	\e\bigl|F_N(x^*)-\e [F_N(x^*)\mid x^*]\bigr|&\le \big(\e\bigl|F_N(x^*)-\e [F_N(x^*)\mid x^*]\bigr|^2\big)^{1/2}\\
	&=\big(\e\,	\mbox{Var}(F_N(x^*)\mid x^*)\big)^{1/2}\leq \sqrt\frac{K(1+\e h_N^{4})}{ N}.
\end{align*}
Lastly, write
\begin{align*}
	\e[F_N(x^*)\mid x^*]-\e F_N(x^*)&=\e[F_N(h_N)\mid h_N]-\e F_N(h_N)\\
	&=\e[F_N(h_N)-F_N(h)\mid h_N]+\bigl(\e F_N(h)-\e F_N(h_N) \bigr).
\end{align*}
We bound the third term in \eqref{add:eq-13} by using the Jensen inequality,
\begin{align*}
	\e\bigl|\e[F_N(x^*)\mid x^*]-\e F_N(x^*) \bigr|&\leq \e\bigl|\e[F_N(h_N)-F_N(h)\mid h_N]\bigr|+\bigl|\e F_N(h)-\e F_N(h_N)\bigr|\\
	&\leq \e\bigl|F_N(h_N)-F_N(h)\bigr|\to 0,
\end{align*}
where the last limit used \eqref{add:eq-6}. Combined with Theorem \ref{ThmMain} and the fact that $\e h_N^{4}$ is bounded by assumption \eqref{convNorm}, this completes our proof for \eqref{add:thm1:eq1}.

\begin{funding}
W.-K.C. was partly supported by NSF DMS-17-52184. D.P. was partially supported by NSERC and Simons Fellowship. 
\end{funding}


\bibliographystyle{imsart-number} 
\bibliography{refs}       


\end{document}